\documentclass[namedate,webpdf,imanum]{ima-authoring-template}

\usepackage{graphicx}
\usepackage{amsmath,amssymb,amsfonts}
\usepackage{amsthm}
\usepackage{booktabs}
\usepackage{mathtools}
\newcommand{\ud}{\mathrm{d}}
\newcommand{\grad}{\mathrm{grad}\,}
\newcommand{\Hess}{\mathrm{Hess}\,}
\newcommand{\D}{\mbox{D}}
\newcommand{\diag}{\mathrm{diag}\,}
\newcommand{\norm}[1]{\left\lVert#1\right\rVert}
\newcommand{\abs}[1]{\left \lvert #1 \right \rvert}
\newcommand{\ip}[1]{\left\langle #1 \right\rangle}
\newcommand{\bx}{\mathbf{x}}
\newcommand{\by}{\mathbf{y}}
\newcommand{\bu}{\mathbf{u}}
\newcommand{\bv}{\mathbf{v}}
\newcommand{\bw}{\mathbf{w}}

\theoremstyle{thmstyleone}
\newtheorem{thm}{Theorem}[section]
\newtheorem{coro}[thm]{Corollary}
\newtheorem{lem}[thm]{Lemma}
\newtheorem{prop}[thm]{Proposition}
\newtheorem{assumption}[thm]{Assumption}
\newtheorem{rem}[thm]{Remark}

\usepackage{etoolbox}
\makeatletter
\patchcmd{\@maketitle}{\itshape\@journaltitle\rm\ (\@copyrightyear) \textbf{\@vol}, \thepage--\thelastpage}{}{}{\PackageError{IMA_paper}{journal line patch failed}{}}
\patchcmd{\@maketitle}{\href{https://doi.org/\@DOI}{https://doi.org/\@DOI}}{}{}{\PackageError{IMA_paper}{DOI patch failed}{}}
\patchcmd{\@maketitle}{[\@history]}{}{}{\PackageError{IMA_paper}{history patch failed}{}}
\def\ps@opening{\let\@oddfoot\@empty\let\@evenfoot\@empty\let\@evenhead\relax\let\@oddhead\relax}
\makeatother

\begin{document}

\DOI{DOI HERE}
\copyrightyear{2026}
\vol{00}
\pubyear{2026}
\access{Advance Access Publication Date: Day Month Year}
\appnotes{Paper}
\copyrightstatement{Published by Oxford University Press on behalf of the Institute of Mathematics and its Applications. All rights reserved.}
\firstpage{1}

\title[Riemannian Hessian analysis of Sobolev gradient flows]{Riemannian Hessian analysis of Sobolev gradient flows for the Gross--Pitaevskii ground state problem}

\author{Shixin Zheng
\address{\orgdiv{Department of Mathematics}, \orgname{University of Maryland}, \orgaddress{College Park, MD 20742, USA}. Email: \href{mailto:siuzheng@umd.edu}{siuzheng@umd.edu}}}
\author{Xiangxiong Zhang*
\address{\orgdiv{Department of Mathematics}, \orgname{Purdue University}, \orgaddress{150 N. University Street, West Lafayette, IN 47907, USA}}}

\authormark{S. Zheng and X. Zhang}

\corresp[*]{Corresponding author: \href{mailto:zhan1966@purdue.edu}{zhan1966@purdue.edu}}

\abstract{Sobolev gradient flows for the Gross--Pitaevskii ground state problem are Riemannian gradient descent methods on the sphere under a metric induced by a discrete elliptic operator. We analyze their convergence through the condition number of the Riemannian Hessian at the ground state. Bounds of the Rayleigh quotient of the Riemannian Hessian give a uniform analysis of these methods for different metrics. The uniform analysis applies to the classical metrics, such as the shifted Laplacian and the energy-adaptive metrics. Inspired by this analysis, we also consider three new metrics. One of them is a Thomas--Fermi metric, whose condition number is bounded uniformly in the interaction strength for the one-dimensional harmonic trap and whose iteration count does not grow in the Thomas--Fermi regime in the numerical tests. The performance suggested by our analysis is consistent with three-dimensional numerical tests with up to one billion unknowns, for both Riemannian gradient descent and Riemannian conjugate gradient methods, whose practical performance is also related to the Hessian condition number. The Thomas--Fermi flow needs the fewest iterations, while the metrics inverted exactly by tensor-product solvers are the fastest in computational time on one GPU card.}
\keywords{Gross--Pitaevskii ground state; Sobolev gradient flows; Riemannian optimization; Riemannian Hessian; preconditioning; Thomas--Fermi regime.}

\maketitle

\section{Introduction}

\subsection{The Gross--Pitaevskii ground state}\label{sec:gpe}
The Gross--Pitaevskii ground state model \citep{gross1961structure,pitaevskii1961vortex} of a Bose--Einstein condensate of $\mathcal N$ identical bosons with scattering length $a$ in a trapping potential $V$ is given as a minimization over the whole space,
\[
\mathcal E^{\mathrm{GP}}(\phi) = \int_{\mathbb R^3}\Big(\abs{\nabla \phi}^2 + V\abs{\phi}^2 + 4\pi a\abs{\phi}^4\Big)\ud\bx
\qquad\text{subject to}\qquad \int_{\mathbb R^3}\abs{\phi}^2\ud\bx = \mathcal N,
\]
and the infimum satisfies $E^{\mathrm{GP}}(\mathcal N,a) = \mathcal N\,E^{\mathrm{GP}}(1,\mathcal Na)$ \citep{lieb2001bosons}, so after rescaling to unit mass the interaction enters only through $\mathcal Na$. Numerical computations are usually carried out on a truncated bounded box $\Omega = (-L,L)^d$, which is justified by the exponential decay of the ground state \citep[Theorem~2.5]{bao2013mathematical} and by the convergence of the truncated energy to the whole-space energy as $L\to\infty$ \citep[Lemma~2.2]{lieb2001bosons}. Thus the size of the box is an approximation parameter of the same kind as the mesh size. The rescaled problem is the constrained minimization
\begin{equation}\label{eqn:functional_prob}
\min_{u \in H^1_0(\Omega),\ \int_\Omega \abs{u}^2 \ud\bx = 1} \mathcal E(u), \qquad \mathcal E(u):= \frac{1}{2} \int_\Omega \left(\abs{\nabla u}^2 + V \abs{u}^2\right) \ud \bx + \frac\beta 4 \int_\Omega \abs{u}^4 \ud \bx,
\end{equation}
on $\Omega = (-L,L)^d$, $d = 1,2,3$, with a trapping potential $V \geq 0$ and a coupling constant $\beta > 0$, which is the defocusing case. Its Euler--Lagrange equation is the nonlinear eigenvalue problem: find $\lambda\in\mathbb R$ and $u\in H^1_0(\Omega)$ with $\|u\|_{L^2}=1$ such that
\begin{equation}\label{eqn:fnctional_prob_weak}
(\nabla u, \nabla v) + (Vu, v) + \beta(|u|^2u, v) = \lambda (u,v),\qquad \forall v \in H_0^1(\Omega),
\end{equation}
where $(\cdot,\cdot)$ is the $L^2$ inner product on $\Omega$. For $\beta>0$ the minimizer exists, is unique up to sign and can be chosen strictly positive, and it is the eigenfunction of this eigenvalue problem for the smallest eigenvalue $\lambda$, the chemical potential \citep{cances2010numerical,henning2025gross}. Taking $v = u$ in \eqref{eqn:fnctional_prob_weak} shows that the eigenvalue of a normalized eigenfunction $u$ is determined by $u$ through
\begin{equation}\label{eqn:lambda-continuous}
\lambda(u) := 2\mathcal E(u) + \frac\beta2\int_\Omega\abs{u}^4\,\ud\bx .
\end{equation}
The right-hand side is defined for every $u\in H^1_0(\Omega)$ with $\|u\|_{L^2} = 1$. The coupling constant $\beta = 8\pi a\mathcal N$ \citep[(2.4)]{lieb2001bosons} is proportional to the number of particles, and typical experiments with $a\approx10^{-3}$ and $\mathcal N$ between $10^2$ and $10^7$ correspond to $\beta$ between about $2.5$ and $2.5\times10^5$ \citep[Section~1.3]{bao2013mathematical}. In the regime of large $\beta$ the ground state approaches the Thomas--Fermi profile $\sqrt{(\lambda - V)_+/\beta}$, where $(t)_+ := \max\{t,0\}$, with a boundary layer at the edge of its support that the mesh must resolve.

The fully discretized problem thus has three parameters of interest, the mesh size $h$, the box size $L$ and the coupling constant $\beta$. The limits $h\to0$ and $L\to\infty$ are needed for the discrete ground state to approximate the ground state on $\mathbb R^d$, and large $\beta$ is the physical regime. Computation is more difficult for smaller $h$, larger $L$ and larger $\beta$.

\subsection{Sobolev gradient flows as Riemannian gradient descent}
The Fr\'echet derivative $\mathcal E'(v)\in H^{-1}(\Omega)$, $\ip{\mathcal E'(v), w} = (\nabla v, \nabla w) + (V v, w) + \beta (|v|^2 v, w)$, must be represented in a Hilbert space before it can serve as a descent direction. To do so, we fix an inner product $(\cdot,\cdot)_X$ on $H^1_0(\Omega)$. Classical examples are the $H^1$ inner product and the energy-adaptive inner product \citep{doi:10.1137/18M1230463},
\begin{equation}\label{eqn:two-metrics}
(v,w)_{H^1} := (\nabla v, \nabla w) + (v,w), \qquad
(v,w)_{a_u} := (\nabla v, \nabla w) + (V v, w) + \beta\, (u^2 v, w),
\end{equation}
where $(\cdot,\cdot)_{a_u}$ depends on the current iterate $u$. Every inner product used in this paper can be computed via an elliptic operator $A_X$, $(v,w)_X = (A_Xv,w)$, for instance $A_{a_u} = -\Delta+V+\beta u^2$ for the second inner product in \eqref{eqn:two-metrics}. The Sobolev gradient $\nabla_X\mathcal E(v)$ is the Riesz representative of $\mathcal E'(v)$ with respect to $(\cdot,\cdot)_X$ \citep{neuberger1997sobolev}, defined by $(\nabla_X\mathcal E(v), w)_X = \ip{\mathcal E'(v), w}$ for all $w\in H^1_0(\Omega)$. Thus the Sobolev gradient is one elliptic solve, $\nabla_X\mathcal E(v) = A_X^{-1}\mathcal E'(v)$. For the $a_u$ inner product, $(u,w)_{a_u} = \ip{\mathcal E'(u), w}$ for all $w$ by \eqref{eqn:two-metrics}, so $\nabla_{a_u}\mathcal E(u) = u$ and no solve is needed.

Projecting the Sobolev gradient onto the tangent space $\{w \in H^1_0(\Omega): (u,w) = 0\}$ with respect to $(\cdot,\cdot)_X$ and renormalizing gives the Sobolev gradient descent
\begin{equation}\label{eqn:sgd-continuous}
u^{(k+1)} = \frac{u^{(k)} - \tau_k P^X_{u^{(k)}}\, \nabla_X \mathcal E(u^{(k)})}{\big\| u^{(k)} - \tau_k P^X_{u^{(k)}}\, \nabla_X \mathcal E(u^{(k)}) \big\|_{L^2(\Omega)}},
\end{equation}
with the $X$-orthogonal projection $P^X_u$ onto the tangent space and a step size $\tau_k > 0$. The projection is one more elliptic solve. In fact, $A_X^{-1}u$ spans the $X$-orthogonal complement of the tangent space, since $(A_X^{-1}u, w)_X = (u,w) = 0$ for every $w$ in it, and thus
\begin{equation}\label{eqn:proj-continuous}
P^X_u\,v = v - \frac{(u,v)}{(u,A_X^{-1}u)}\,A_X^{-1}u ,
\qquad
P^X_u\nabla_X\mathcal E(u) = A_X^{-1}\mathcal E'(u) - \frac{(u,A_X^{-1}\mathcal E'(u))}{(u,A_X^{-1}u)}\,A_X^{-1}u .
\end{equation}
For the $a_u$ metric, $\nabla_{a_u}\mathcal E(u) = u$ and the single solve $A_{a_u}^{-1}u$ suffices. For other metrics, one step of \eqref{eqn:sgd-continuous} thus costs two elliptic solves with $A_X$, one for the gradient and one for the projection. Table \ref{tab:metric-list} collects three classical metrics, the modified $H^1$, $a_0$ and $a_u$ metrics, and three new metrics which we consider in this paper. The new metrics are variants of the classical ones and are inspired by the Riemannian Hessian analysis in Section \ref{sec:rayleigh}.

\begin{table}[htbp]
\centering\footnotesize
\setlength{\tabcolsep}{6pt}
\begin{tabular}{llc>{\raggedright\arraybackslash}p{0.3\textwidth}}
\hline
classical metric & elliptic operator $A_X$ & solves/iter. & discussed in\\
\hline
modified $H^1$ & $-\Delta + \alpha$ & 2 & \citet{chen2024fully}\\
$a_0$ & $-\Delta + V$ & 2 & \citet{danaila2010new,danaila2017computation,chen2024convergence}\\
$a_u$ & $-\Delta + V + \beta u^2$ & 1 & \citet{doi:10.1137/18M1230463,hauck2024positivity}\\
\hline
new metric & elliptic operator $A_X$ & solves/iter. & discussed in\\
\hline
modified $a_0$ & $-\Delta + V_1$ & 2 & Section \ref{sec:modified-a0}\\
modified $a_u$ & $-\Delta + V + 3\beta u^2$ & 2 & Section \ref{sec:modified-au}\\
Thomas--Fermi & $-\Delta + 2\beta u^2 + (V-\lambda(u))_+ + \ell\lambda(u)^{1/3}$ & 2 & Section \ref{sec:tf-metric}\\
\hline
\end{tabular}
\caption{Here $\alpha\geq0$ is a constant, $V_1(\bx) = \sum_{j=1}^d v_j(x_j)$ is a separable part of the potential, $\lambda(u)$ is defined by \eqref{eqn:lambda-continuous} for the current iterate $u$ and $\ell > 0$ is a fixed parameter.}
\label{tab:metric-list}
\end{table}

The iteration \eqref{eqn:sgd-continuous} is a Riemannian gradient descent on the sphere, in which the projected Sobolev gradient is the Riemannian gradient for the metric induced by $(\cdot,\cdot)_X$ \citep{danaila2017computation} and the normalization is a retraction \citep[Sect.~4.1]{absil2008optimization}. It can also be regarded as a preconditioned projected gradient descent with the preconditioner $A_X^{-1}$ \citep{antoine2017efficient,henning2025gross}. For the $a_u$ metric, the step with $\tau_k = 1$ is an inverse iteration for the linearized operator, and $\tau_k \in (0,1)$ gives the damped inverse iteration \citep{doi:10.1137/18M1230463,henning2025gross}. After discretization in space, with quadrature for the nonlinear terms, the metrics become matrices, and Section \ref{sec:scheme} formulates the iteration as a Riemannian gradient descent on a sphere in $\mathbb R^N$.

\subsection{Main results and related work}
In this paper we analyze the Riemannian Hessian of the discrete energy at the ground state for an arbitrary metric induced by a discrete elliptic operator, and we compare metrics through its conditioning, as in our earlier work on fixed-rank matrix optimization \citep{zheng2025threemetrics}. At a critical point this Hessian is the second variation of the discrete energy along the normalization retraction (Lemma \ref{lem:second-variation}). It is thus the same quadratic form for every metric, and its Rayleigh quotient is the ratio of a metric-independent constrained Hessian to the metric operator. This gives two-sided bounds at the ground state (Theorem \ref{thm:rayleigh-quotient-bounds}), closed-form bounds on the condition number $\kappa$ (Corollary \ref{coro:conditioning}) and local rates, in particular the rate $(\kappa-1)/(\kappa+1)$ of the fixed-step iteration at the optimal step (Theorem \ref{thm:asymptotic-rate} and Proposition \ref{thm:local-convergence}). For the $a_u$ metric, the lower bound on the spectrum is asymptotically exact in the Thomas--Fermi regime for the one-dimensional harmonic trap, where the condition number grows like the two-thirds power of the ground state eigenvalue (Corollary \ref{thm:TF}). The corresponding bound based on the spectral gap alone is not sharp \citep[Remark~3.3]{doi:10.1142/S0218202523500343}, and Appendix \ref{app:spectra} compares both bounds with exact spectra in one, two and three dimensions. For the normalized $a_0$ flow, an explicit bound for the local contraction rate is listed as open in \citet[Section~5.2.2]{henning2025gross}, and we give one for the fully discretized flow (Corollary \ref{coro:conditioning} and Theorem \ref{thm:asymptotic-rate}). The bound for the modified $H^1$ metric grows with the box size $L$, as do its iteration counts in the tests. We also prove convergence of the whole iterate sequence to a critical point from any initial value (Theorem \ref{thm:global}).

A metric whose operator is closer to the constrained Hessian is better conditioned. Inspired by this analysis, we also consider three new metrics. The modified $a_0$ metric keeps only the separable part $V_1$ of the potential, and its operator $-\Delta+V_1$ is inverted exactly by tensor-product solvers. It is motivated by \citet{liu2026gpu}, where $(-\Delta+V_1)^{-1}$ preconditions the inner solves of the $a_u$ flow and gives mesh- and domain-independent conditioning when $V_1$ is confining. The Riemannian Hessian analysis shows that the metric inherits the robustness in $L$. The modified $a_u$ metric asymptotically halves the condition number of the $a_u$ metric (Proposition \ref{prop:modified-au}). The Thomas--Fermi metric is a Thomas--Fermi approximation of the constrained Hessian. For the one-dimensional harmonic trap, it is not uniformly equivalent to the $a_u$ metric, and its condition number is bounded uniformly in $\beta$ (Proposition \ref{prop:tf-metric} and Remark \ref{rem:tf-orders}). In three-dimensional tests with up to $10^9$ unknowns (Section \ref{sec:numerics}), the Thomas--Fermi flow needs $8$ iterations against $35$ for the $a_u$ flow, and its iteration count does not grow over a factor of $64$ in $\beta$. Nevertheless, the modified $H^1$ and modified $a_0$ metrics are the fastest in wall clock time, because their operators are separable. The modified $a_0$ metric remains the fastest as $L$ grows.

The literature on computing Gross--Pitaevskii ground states is extensive, and we refer to the review \citep{henning2025gross}. The gradient flow with discrete normalization \citep{bao2004computing}, a semi-implicit discretization of the projected $L^2$ gradient flow, remains the most widely used method. The classical metrics of Table \ref{tab:metric-list} are those of the $H^1$ flow \citep{kazemi2010minimizing,chen2024fully}, of the $a_0$ flow and its Riemannian gradient and conjugate gradient versions \citep{danaila2010new,danaila2017computation}, and of the $a_u$ flow \citep{doi:10.1137/18M1230463}, which was extended to Kohn--Sham energies in \citet{altmann2022energy}. Other approaches include preconditioned gradient methods in the $L^2$ metric \citep{antoine2017efficient}, second-order flows \citep{chen2023second}, the $J$-method \citep{altmann2021j} and Riemannian Newton methods \citep{altmann2024riemannian}, and their convergence theory has been developed mostly at the continuous level \citep{kazemi2010minimizing,faou2018convergence,doi:10.1137/18M1230463,zhang2022exponential,chen2024convergence,henning2024rotating,zhang2025rotating}. The basic inverse iteration is the $a_u$ flow with the step $\tau_k = 1$. It converges locally at the rate $\lambda_1/\lambda_2$, the ratio of the first two eigenvalues of the linearized operator \citep[Theorem~3.2]{doi:10.1142/S0218202523500343}. Its sharp rate is the largest modulus $|\mu_1|$ of the eigenvalues of a weighted eigenvalue problem, and $|\mu_1|<\lambda_1/\lambda_2$ for every $\beta>0$ \citep[Remark~3.3]{doi:10.1142/S0218202523500343}. Up to a shift, this weighted eigenvalue problem is the eigenvalue problem of the Riemannian Hessian under the $a_u$ metric (Section \ref{sec:local}). For the damped inverse iteration with a fixed damping parameter $\tau\in(0,1]$ the rate is the convex combination $|1-\tau|+\tau|\mu_1|\leq|1-\tau|+\tau\lambda_1/\lambda_2$ \citep[Theorem~6.2]{doi:10.1142/S0218202523500343}, \citep[Theorem~5.17]{henning2025gross}, and Theorem \ref{thm:asymptotic-rate} gives it as $\max_j|1-\tau+\tau\mu_j|$, the sharper value noted in \citet[Section~6.1]{doi:10.1142/S0218202523500343} (Section \ref{sec:local}). For the gradient flow with discrete normalization, energy dissipation and global convergence were established recently \citep{fengtangwang2025discrete}, and a unified framework of preconditioned Riemannian gradient methods with a local rate derived from a Polyak--{\L}ojasiewicz inequality was proposed in \citet{fengtang2025preconditioned}. Riemannian metrics based on regularized Lagrangian Hessians have also been studied \citep{altmann2026riemannian,fengtang2025preconditioned}. Results for the fully discretized problem are fewer: a priori error estimates \citep{cances2010numerical,henning2014two,henning2023optimal,henning2025discrete}, global convergence and a mesh-independent local rate of the discrete modified $H^1$ flow \citep{chen2024fully}, and positivity and uniqueness of the discrete ground state with global convergence of the discrete $a_u$ flow from nonnegative data \citep{hauck2024positivity}. For monotone discrete Laplacians, our recent work \citep{li2026splitting} proves global convergence to the positive discrete ground state from every positive normalized initial vector for a Davis--Yin operator splitting scheme, which treats the potential and the interaction explicitly and inverts only a shifted Laplacian in each iteration. The Thomas--Fermi asymptotics used in Corollary \ref{thm:TF} were established in \citet{gallo2011thomas} for radial harmonic traps, and the boundary layer analysis was extended to general potentials in two dimensions in \citet{karali2015ground}.

\subsection{Organization of the paper}
Section \ref{sec:scheme} sets up the discrete problem, the six metrics and the iteration. Section \ref{sec:rayleigh} contains the analysis of the Hessian at a critical point, the two-sided bounds, the three new metrics and the Thomas--Fermi asymptotics of the $a_u$ metric. Section \ref{sec:convergence} turns the bounds into local rates and records global convergence, Section \ref{sec:numerics} reports the three-dimensional experiments and Section \ref{sec:conclusions} concludes. Appendix \ref{app:spectra} tests the bounds against exact spectra of the Riemannian Hessian, and Appendix \ref{app:riemannian-connection} contains the Riemannian connection, the Hessian at an arbitrary point and the vector transport.

\section{Riemannian gradient descent}\label{sec:scheme}
\subsection{Discretization}\label{sec:fem}
We discretize \eqref{eqn:functional_prob} by the $Q^k$ finite element method with tensor-product Gauss--Lobatto quadrature on a rectangular mesh, following \citet[Section~2]{chen2024fully}, to which we refer for the details and for the $P^k$ element on a simplicial mesh. Let $\{\bx_i\}_{i=1}^N$ be the interior quadrature nodes, which are also the interpolation nodes of the Lagrange basis $\{\phi_i\}_{i=1}^N$ of the finite element space $V_h^0\subset H^1_0(\Omega)$, and let $w_i>0$ be the quadrature weights. Every $u_h\in V_h^0$ is identified with its vector of nodal values $\bu = [u_h(\bx_1),\ldots,u_h(\bx_N)]^T\in\mathbb R^N$, and the discrete $L^2$ inner product is
\[
\ip{\bu,\bv}_h := \bu^T\mathbb M\bv = \sum_{i=1}^N w_i u_i v_i, \qquad \norm{\bu}_h := \ip{\bu,\bu}_h^{1/2}, \qquad \mathbb M := \diag(w_1,\ldots,w_N).
\]
With the potential matrix $\mathbb V := \diag(V(\bx_1),\ldots,V(\bx_N))$ and the stiffness matrix $\mathbb S_{ij} := \ip{\nabla\phi_i,\nabla\phi_j}_h$, the energy \eqref{eqn:functional_prob} with all integrals replaced by quadrature is the fully discrete energy on $\mathbb R^N$,
\begin{equation}\label{eqn:discrete-energy}
E(\bu) = \frac12\bu^T\mathbb S\bu + \frac12\bu^T\mathbb M\mathbb V\bu + \frac\beta4(\bu^2)^T\mathbb M\bu^2,
\end{equation}
to be minimized over the sphere $\mathcal M := \{\bu\in\mathbb R^N: \ip{\bu,\bu}_h = 1\}$, where powers of vectors such as $\bu^2$ are taken componentwise. Its Euler--Lagrange equation is the discrete eigenproblem
\begin{equation}\label{eqn:matrix-form-1}
A_\beta(\bu)\bu = \lambda\bu, \qquad A_\beta(\bu) := -\Delta_h + \mathbb V + \beta\diag(\bu^2), \qquad \ip{\bu,\bu}_h = 1,
\end{equation}
that is, $\mathbb S\bu + \mathbb M\mathbb V\bu + \beta\mathbb M\bu^3 = \lambda\mathbb M\bu$, where $-\Delta_h := \mathbb M^{-1}\mathbb S$. The matrix $-\Delta_h$ is not symmetric in general. However, every operator $-\Delta_h + \mathbb D$ with a diagonal $\mathbb D$, including $-\Delta_h$ itself, is self-adjoint with respect to $\ip{\cdot,\cdot}_h$, since $\mathbb M(-\Delta_h+\mathbb D) = \mathbb S + \mathbb M\mathbb D$ is symmetric. Such an operator $A$ has real eigenvalues $\lambda_1(A)\leq\cdots\leq\lambda_N(A)$, which are the eigenvalues of the symmetric pencil $(\mathbb MA,\mathbb M)$, and the Courant--Fischer min-max principle holds for them in $\ip{\cdot,\cdot}_h$. We also write $\mathbb I$ for the identity matrix and $A_{3\beta}(\bu) := -\Delta_h + \mathbb V + 3\beta\diag(\bu^2)$ for the derivative of $\bu\mapsto A_\beta(\bu)\bu$. For convenience, we use $A_\beta$ and $A_{3\beta}$ to denote $A_\beta(\bu^*)$ and $A_{3\beta}(\bu^*)$, where $\bu^*$ is the ground state.

On a uniform mesh, the $Q^1$ scheme with this quadrature reduces to the second-order finite difference scheme in any dimension, and so does the $P^1$ scheme in one dimension. Part of the analysis uses the sign structure of the scheme.
\begin{assumption}[Monotone scheme]\label{asp:monotone}
The stiffness matrix $\mathbb S$ is an irreducible Stieltjes matrix, that is, symmetric positive definite with nonpositive off-diagonal entries and an irreducible sparsity pattern, and its row sums are nonnegative.
\end{assumption}
Assumption \ref{asp:monotone} holds for the second-order finite difference scheme in any dimension and for the $P^1$ finite element method with mass lumping on a simplicial mesh satisfying an angle condition, which in two dimensions means a Delaunay triangulation \citep{chen2024fully}. The Perron--Frobenius theorem and the convexity of the energy in $\bu^2$ give the following lemma \citep[Section~3]{chen2024fully}.
\begin{lem}[Simple smallest eigenvalue and positive ground state]\label{lem:perron}
Under Assumption \ref{asp:monotone}, for every $\bu\in\mathbb R^N$ the matrices $\mathbb MA_\beta(\bu) = \mathbb S + \mathbb M(\mathbb V + \beta\diag(\bu^2))$ and $\mathbb MA_{3\beta}(\bu)$ are irreducible Stieltjes matrices. Thus $A_\beta(\bu)$ and $A_{3\beta}(\bu)$ are nonsingular M-matrices with entrywise positive inverses, and their smallest eigenvalues are simple with entrywise positive eigenvectors. In particular, the discrete ground state $\bu^*$ is unique up to sign and strictly positive, and it is an eigenvector of the simple smallest eigenvalue $\lambda_1(A_\beta)$ of $A_\beta$.
\end{lem}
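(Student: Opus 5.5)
First I will show that $\mathbb MA_\beta(\bu)=\mathbb S+\mathbb M(\mathbb V+\beta\diag(\bu^2))$ is an irreducible Stieltjes matrix for every $\bu\in\mathbb R^N$. It is symmetric, being $\mathbb S$ plus a diagonal matrix. It has the same off-diagonal entries as $\mathbb S$, hence the same nonpositive sign pattern and the same irreducible sparsity graph. The added diagonal $\mathbb M(\mathbb V+\beta\diag(\bu^2))$ has nonnegative entries, because $w_i>0$, $V\geq0$ and $\beta>0$. Adding a nonnegative diagonal to a symmetric positive definite matrix keeps it positive definite, so $\mathbb MA_\beta(\bu)$ is Stieltjes. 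The same argument with $3\beta$ in place of $\beta$ covers $\mathbb MA_{3\beta}(\bu)$.

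Next come the M-matrix and positivity statements. A symmetric positive definite Z-matrix is a nonsingular M-matrix, and an irreducible nonsingular M-matrix has an entrywise positive inverse. Writing $A_\beta(\bu)=\mathbb M^{-1}(\mathbb MA_\beta(\bu))$, the matrix $A_\beta(\bu)$ is a positive diagonal scaling of an irreducible nonsingular M-matrix. Such a scaling is again an irreducible Z-matrix, and it is a nonsingular M-matrix because its eigenvalues are those of the pencil and hence positive. Its inverse is $(\mathbb MA_\beta(\bu))^{-1}\mathbb M$, which is entrywise positive.

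For the simplicity of the smallest eigenvalue, pick $s$ larger than every diagonal entry of $A_\beta(\bu)$. Then $s\mathbb I-A_\beta(\bu)$ is nonnegative and irreducible. By Perron--Frobenius its spectral radius is a simple eigenvalue with a positive eigenvector, and it is the largest eigenvalue, since all eigenvalues are real. This eigenvalue corresponds to $\lambda_1(A_\beta(\bu))$, which is therefore simple with a positive eigenvector. The same reasoning applies to $A_{3\beta}(\bu)$.

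The main obstacle is the statement about the ground state. I would follow the convexity argument of \citet{chen2024fully}. Write $\rho=\bu^2$, so that $\rho$ lies in the simplex $\{\rho\geq0,\ \sum_iw_i\rho_i=1\}$. The potential and interaction terms of $E$ are linear and strictly convex quadratic in $\rho$. The kinetic term is $\frac12\sum_{i<j}(-\mathbb S_{ij})(u_i-u_j)^2+\frac12\sum_i(\text{row sum})_iu_i^2$, and it depends only on $\rho$ once we restrict to $|\bu|$. The restriction to $|\bu|$ loses nothing: $E(|\bu|)\leq E(\bu)$ because $\mathbb S_{ij}\leq0$ off the diagonal. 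On nonnegative vectors the kinetic term is convex in $\rho$, since $(\sqrt{\rho_i}-\sqrt{\rho_j})^2$ is convex; this is the discrete analogue of the convexity of $\int\abs{\nabla\sqrt\rho}^2$. Hence $E$ is strictly convex in $\rho$, and the minimizing density is unique. Consequently any minimizer $\bu$ satisfies $\bu^2=\rho^*$. Moreover $|\bu|$ is also a minimizer, and it satisfies the Euler--Lagrange equation \eqref{eqn:matrix-form-1}, so it is a nonnegative eigenvector of $A_\beta(|\bu|)$. A nonnegative eigenvector of an irreducible M-matrix-type operator must be the Perron vector. Therefore $|\bu|>0$ and $\lambda=\lambda_1(A_\beta)$.

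It remains to rule out mixed signs. Since $E(\bu)=E(|\bu|)$, equality must hold in every inequality $-\mathbb S_{ij}(u_i-u_j)^2\geq -\mathbb S_{ij}(|u_i|-|u_j|)^2$. Together with the irreducibility of $\mathbb S$ and the fact that no $u_i$ vanishes, this forces all $u_i$ to have the same sign. So $\bu=\pm\bu^*$.
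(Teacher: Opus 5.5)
Your proposal is correct and takes essentially the same route as the paper. The paper gives no proof of its own and attributes the lemma to the Perron--Frobenius theorem and the convexity of $E$ in $\bu^2$, following Chen et al. Your Stieltjes/M-matrix and Perron--Frobenius steps give the spectral claims. Strict convexity in $\rho=\bu^2$, combined with $E(|\bu|)\leq E(\bu)$, the uniqueness of nonnegative eigenvectors of an irreducible operator, and the equality case along the edges of the connected graph of $\mathbb S$, then gives that $\bu^*$ is unique up to sign and strictly positive.
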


\subsection{Riemannian metrics}\label{sec:classical-metrics}
A Riemannian metric on $\mathbb R^N$ is specified at each point $\bu\in\mathbb R^N$ by a matrix $B(\bu)$, self-adjoint and positive definite with respect to $\ip{\cdot,\cdot}_h$, through
\begin{equation}\label{eqn:metric}
g_\bu(\xi,\eta) := \ip{B(\bu)\xi,\eta}_h = \xi^T\mathbb M B(\bu)\eta, \qquad \xi,\eta\in\mathbb R^N .
\end{equation}
We call $B(\bu)$ the metric operator. The metric is called \emph{Euclidean}, or constant, if $B$ does not depend on $\bu$, and \emph{$\bu$-dependent} otherwise. The discrete counterparts of the six metrics of Table \ref{tab:metric-list}, with $-\Delta_h$ in place of $-\Delta$, are
\begin{equation}\label{eqn:six-metrics}
\begin{aligned}
&\text{modified } H^1\text{:}\ \ B = -\Delta_h + \alpha\mathbb I,\ \alpha\geq0, &&\qquad \text{modified } a_0\text{:}\ \ B = A_1 := -\Delta_h + \mathbb V_1, \\
&a_0\text{:}\ \ B = -\Delta_h + \mathbb V, &&\qquad \text{modified } a_u\text{:}\ \ B(\bu) = A_{3\beta}(\bu), \\
&a_u\text{:}\ \ B(\bu) = A_\beta(\bu), &&\qquad \text{Thomas--Fermi:}\ \ B(\bu) = B_{\mathrm{TF}}(\bu),
\end{aligned}
\end{equation}
where
\begin{equation}\label{eqn:tf-metric}
B_{\mathrm{TF}}(\bu) := -\Delta_h + 2\beta\diag(\bu^2) + \diag\big((\mathbb V - \lambda(\bu))_+\big) + \ell\lambda(\bu)^{1/3}\,\mathbb I
\end{equation}
with the positive part $(\cdot)_+$ taken entrywise, a parameter $\ell>0$ and the discrete counterpart of \eqref{eqn:lambda-continuous},
\begin{equation}\label{eqn:lambda-u}
\lambda(\bu) := \ip{A_\beta(\bu)\bu,\bu}_h = 2E(\bu) + \tfrac\beta2\ip{\bu^2,\bu^2}_h ,
\end{equation}
which is nonnegative for every $\bu\in\mathbb R^N$. For the modified $a_0$ metric, we split $\mathbb V = \mathbb V_1 + \mathbb V_2$, where $\mathbb V_1\geq0$ samples a \emph{separable} potential $V_1(\bx) = \sum_{j=1}^d v_j(x_j)$ and $\mathbb V_2$ is bounded uniformly in $h$. Each of the six operators has the form $B(\bu) = -\Delta_h+\mathbb D$ with a nonnegative diagonal matrix $\mathbb D$. Thus $\mathbb M B(\bu) = \mathbb S + \mathbb M\mathbb D$ is symmetric positive definite, and \eqref{eqn:metric} is a metric on all of $\mathbb R^N$. Applying $B(\bu)^{-1}$ requires one sparse linear solve with $\mathbb S + \mathbb M\mathbb D$. On rectangular meshes with $Q^k$ elements and a separable diagonal $\mathbb D$, as in the modified $H^1$ and modified $a_0$ metrics, this solve is carried out exactly by tensor-product direct solvers at $O(N^{1+1/d})$ cost \citep{liu2024simple,liu2026gpu}, while the other four operators need an inner iterative solver.

For a map $F$ on $\mathbb R^N$, we write $\D F(\bx)[\xi] := \frac{d}{dt}F(\bx+t\xi)\big|_{t=0}$ for its directional derivative. The gradient of $E$ in $(\mathbb R^N,g)$ \citep[(3.31)]{absil2008optimization} is the Riesz representative of $\D E(\bu)[\eta] = \ip{A_\beta(\bu)\bu,\eta}_h$ in $g_\bu$, which is the discrete Sobolev gradient
\begin{equation}\label{eqn:gradient-ambident}
\nabla_X E(\bu) = B(\bu)^{-1}A_\beta(\bu)\bu .
\end{equation}
For the $a_u$ metric, $\nabla_XE(\bu) = \bu$.

\subsection{The manifold and the iteration}\label{sec:submanifold}
The sphere $\mathcal M$ is an embedded submanifold of $(\mathbb R^N,g)$ by the submersion theorem \citep[Prop.~3.3.3]{absil2008optimization}, with tangent and normal spaces
\begin{equation}\label{eqn:tangent-normal-space}
T_\bu\mathcal M = \{\xi\in\mathbb R^N: \ip{\xi,\bu}_h = 0\}, \qquad N_\bu\mathcal M = \{s\,B(\bu)^{-1}\bu: s\in\mathbb R\}.
\end{equation}
The tangent space is the kernel of the derivative $\xi\mapsto2\ip{\xi,\bu}_h$ of the constraint $\ip{\bu,\bu}_h = 1$ \citep[(3.19)]{absil2008optimization}. The normal space is the $g_\bu$-orthogonal complement of the tangent space \citep[Sect.~3.6.1]{absil2008optimization}. It is spanned by $B(\bu)^{-1}\bu$, since $g_\bu(B(\bu)^{-1}\bu,\xi) = \ip{\bu,\xi}_h = 0$ for every $\xi\in T_\bu\mathcal M$. The $g_\bu$-orthogonal projection onto $T_\bu\mathcal M$ and the Riemannian gradient of $E$ on $\mathcal M$ are thus
\begin{equation}\label{eqn:projection}
\begin{gathered}
P_\bu\bw = \bw - \frac{\ip{\bu,\bw}_h}{\ip{\bu,B(\bu)^{-1}\bu}_h}\,B(\bu)^{-1}\bu, \\
\grad E(\bu) = P_\bu\nabla_XE(\bu) = B(\bu)^{-1}\big(A_\beta(\bu)\bu - \lambda_B(\bu)\bu\big),
\end{gathered}
\end{equation}
where $\lambda_B(\bu) := \ip{\bu,B(\bu)^{-1}A_\beta(\bu)\bu}_h/\ip{\bu,B(\bu)^{-1}\bu}_h$ is the metric-weighted Rayleigh quotient. The Riemannian gradient is the projection of the ambient gradient \eqref{eqn:gradient-ambident} onto the tangent space \citep[(3.37)]{absil2008optimization}, and both formulas are the matrix form of \eqref{eqn:proj-continuous}. Thus $\grad E(\bu) = 0$ exactly when $\bu$ is an eigenvector of $A_\beta(\bu)$, that is, a solution of \eqref{eqn:matrix-form-1}, and pairing with $\bu$ identifies the eigenvalue as $\lambda(\bu)$ of \eqref{eqn:lambda-u}, where also $\lambda_B(\bu) = \lambda(\bu)$. The Riemannian gradient descent for minimizing $E$ over $\mathcal M$ with step size $\tau_k>0$ is
\begin{equation}\label{eqn:iteration}
\bu_{k+1} = R_{\bu_k}\big(-\tau_k\grad E(\bu_k)\big), \qquad R_\bu(\xi) := \frac{\bu+\xi}{\norm{\bu+\xi}_h} = \frac{\bu+\xi}{\sqrt{1+\norm{\xi}_h^2}}, \quad \xi\in T_\bu\mathcal M,
\end{equation}
where the normalization $R_\bu$ is a retraction \citep[Def.~4.1.1 and Example~4.1.1]{absil2008optimization}, that is, $R_\bu(0) = \bu$ and $\frac{d}{dt}R_\bu(t\xi)\big|_{t=0} = \xi$ for every $\xi\in T_\bu\mathcal M$. The retraction replaces the exponential map of $(\mathcal M,g)$ \citep[Sect.~5.4]{absil2008optimization}, which has no closed form for the $\bu$-dependent metrics. This iteration is the matrix form of \eqref{eqn:sgd-continuous}. The retraction $R_\bu$ maps $T_\bu\mathcal M$ bijectively onto the open hemisphere $\{\bw\in\mathcal M: \ip{\bw,\bu}_h>0\}$, with inverse $\bw\mapsto\bw/\ip{\bu,\bw}_h - \bu$. In fact, $\ip{R_\bu(\xi),\bu}_h = (1+\norm{\xi}_h^2)^{-1/2} > 0$, and pairing $R_\bu(\xi) = \bw$ with $\bu$ gives $\bu+\xi = \bw/\ip{\bu,\bw}_h$.

\section{Riemannian Hessian analysis}\label{sec:rayleigh}
At a critical point, the Rayleigh quotient of the Riemannian Hessian in a tangent direction $\eta$ is the second variation of $E$ in this direction divided by $g_\bu(\eta,\eta)$. Only the denominator depends on the metric (Section \ref{sec:hessian-critical}). Comparing the numerator with the denominator gives two-sided bounds at the ground state for all metrics at once (Section \ref{sec:bounds}), following the approach in \citet{zheng2025threemetrics}. Inspired by this comparison, we also consider three new metrics (Sections \ref{sec:modified-a0} to \ref{sec:tf-metric}). Section \ref{sec:tf-metric} also derives the sharp Thomas--Fermi asymptotics of the $a_u$ metric from \citet{gallo2011thomas}. Appendix \ref{app:spectra} tests the bounds against exact spectra of the Riemannian Hessian.

\subsection{The Hessian at a critical point}\label{sec:hessian-critical}
The Riemannian Hessian of $E$ on $\mathcal M$ at $\bu$ is the linear map $\Hess E(\bu)$ of $T_\bu\mathcal M$ into itself defined by $\Hess E(\bu)[\xi] = \nabla_{\xi}\grad E$, where $\nabla$ is the Riemannian connection of $(\mathcal M,g)$ \citep[Def.~5.5.1]{absil2008optimization}, and its Rayleigh quotient is
\begin{equation}\label{eqn:rq-def}
\rho_\bu(\eta) := \frac{g_\bu(\Hess E(\bu)[\eta],\eta)}{g_\bu(\eta,\eta)}, \qquad \eta\in T_\bu\mathcal M\setminus\{0\}.
\end{equation}
The Hessian is self-adjoint with respect to $g_\bu$ \citep[Prop.~5.5.3]{absil2008optimization}, so its eigenvalues are real and its extreme eigenvalues are the extreme values of $\rho_\bu$.

\begin{lem}[Second variation at a critical point]\label{lem:second-variation}
Let $\bu^*\in\mathcal M$ be a critical point of $E$ on $\mathcal M$, so that $A_\beta(\bu^*)\bu^* = \lambda(\bu^*)\bu^*$, and let $\eta\in T_{\bu^*}\mathcal M$. Then, for every metric \eqref{eqn:metric} that is smooth near $\bu^*$ and every smooth curve $\bu(t)$ on $\mathcal M$ with $\bu(0) = \bu^*$ and $\bu'(0) = \eta$,
\begin{equation}\label{eqn:second-variation}
g_{\bu^*}(\Hess E(\bu^*)[\eta],\eta) = \frac{d^2}{dt^2}E(\bu(t))\Big|_{t=0} = \ip{\big(A_{3\beta}(\bu^*) - \lambda(\bu^*)\mathbb I\big)\eta,\eta}_h .
\end{equation}
\end{lem}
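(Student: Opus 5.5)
The plan proves the two equalities in \eqref{eqn:second-variation} separately. The first, which ties the Riemannian Hessian to the second derivative along a curve, is a standard fact at critical points. The second is a direct computation with the constraint. The main input is the definition $\Hess E(\bu)[\xi] = \nabla_\xi\grad E$ together with the fact that $\grad E(\bu^*)=0$.

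\emph{First equality.} Set $f(t) := E(\bu(t))$. Then $f'(t) = g_{\bu(t)}(\grad E(\bu(t)),\bu'(t))$. Differentiate once more using metric compatibility of the Levi-Civita connection along the curve, with $\frac{D}{dt}$ the covariant derivative along $\bu(t)$:
\[
f''(t) = g_{\bu(t)}\Big(\tfrac{D}{dt}\grad E(\bu(t)),\bu'(t)\Big) + g_{\bu(t)}\Big(\grad E(\bu(t)),\tfrac{D}{dt}\bu'(t)\Big).
\]
At $t=0$ the second term vanishes because $\grad E(\bu^*)=0$. The first term equals $g_{\bu^*}(\Hess E(\bu^*)[\eta],\eta)$, since $\frac{D}{dt}\grad E(\bu(t))|_{t=0} = \nabla_\eta \grad E$. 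This step needs only smoothness of $g$ near $\bu^*$, so that the connection exists and is smooth. It is the only place where the metric enters, and the result is independent of $B$.

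\emph{Second equality.} Compute $f''(0)$ directly in $\mathbb R^N$. We have $f'(t) = \ip{A_\beta(\bu)\bu,\bu'}_h$. Since $\D(A_\beta(\bu)\bu)[\xi] = A_{3\beta}(\bu)\xi$, as recorded after \eqref{eqn:matrix-form-1}, it follows that
\[
f''(0) = \ip{A_{3\beta}(\bu^*)\eta,\eta}_h + \ip{A_\beta(\bu^*)\bu^*,\bu''(0)}_h = \ip{A_{3\beta}\eta,\eta}_h + \lambda(\bu^*)\ip{\bu^*,\bu''(0)}_h .
\]
Differentiating the constraint $\ip{\bu(t),\bu(t)}_h=1$ twice gives $\ip{\bu^*,\bu''(0)}_h = -\norm{\eta}_h^2$. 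Substituting yields $\ip{(A_{3\beta}(\bu^*)-\lambda(\bu^*)\mathbb I)\eta,\eta}_h$.

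This also shows that $f''(0)$ does not depend on the choice of curve, which is consistent with the first equality. No serious obstacle is expected. The only delicate point is the first equality, where metric compatibility must be applied along a curve for a $\bu$-dependent metric on the embedded submanifold. This is justified by \citep[Prop.~5.5.2--5.5.3]{absil2008optimization}, or alternatively by the known identity $\Hess f(x)[\eta] = \frac{D^2}{dt^2}$-type formulas at critical points.
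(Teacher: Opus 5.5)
Your proposal is correct and is essentially the paper's proof. The first equality comes from differentiating $g_{\bu(t)}(\grad E(\bu(t)),\bu'(t))$ with the product rule for the Levi-Civita connection along the curve and dropping the acceleration term because $\grad E(\bu^*)=0$. The second comes from expanding the polynomial $E$ and using $\ip{\bu^*,\bu''(0)}_h=-\norm{\eta}_h^2$. For the step you flag as delicate, the paper justifies the product rule by the compatibility of the connection with the metric (Theorem~5.3.1 of Absil, Mahony and Sepulchre, or the product rule of Theorem~5.29 of Boumal), rather than by a Hessian proposition.
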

\begin{proof}
The chain rule and the compatibility of $\nabla$ with the metric \citep[Theorem~5.3.1]{absil2008optimization} give $\frac{d^2}{dt^2}E(\bu(t)) = g_{\bu(t)}(\Hess E(\bu(t))[\bu'(t)],\bu'(t)) + g_{\bu(t)}(\grad E(\bu(t)),\frac{\D}{dt}\bu'(t))$, where $\frac{\D}{dt}$ is the covariant derivative along the curve \citep[Sect.~5.4]{absil2008optimization}. Along the curve, these two properties are the chain rule and the product rule of \citet[Theorem~5.29]{boumal2023intromanifolds}. The second term vanishes at $t=0$ because $\grad E(\bu^*) = 0$. This is the first equality. For curves $t\mapsto R_{\bu^*}(t\eta)$ generated by a retraction, it is also a consequence of \citet[Prop.~5.5.6]{absil2008optimization}. For the second equality, $E$ is a polynomial on $\mathbb R^N$, so $\frac{d^2}{dt^2}E(\bu(t))|_{t=0} = \D^2E(\bu^*)[\eta,\eta] + \D E(\bu^*)[\bu''(0)]$ with $\D^2E(\bu^*)[\eta,\eta] = \ip{A_{3\beta}(\bu^*)\eta,\eta}_h$ and $\D E(\bu^*)[\bw] = \ip{A_\beta(\bu^*)\bu^*,\bw}_h = \lambda(\bu^*)\ip{\bu^*,\bw}_h$ by the eigenvalue equation, while differentiating $\ip{\bu(t),\bu(t)}_h = 1$ twice at $t=0$ gives $\ip{\bu^*,\bu''(0)}_h = -\ip{\eta,\eta}_h$.
\end{proof}

The right-hand side of \eqref{eqn:second-variation} depends neither on the metric nor on the retraction. Only the denominator of \eqref{eqn:rq-def} depends on the metric, and at the ground state $\bu^*$, where $\lambda(\bu^*) = \lambda_1(A_\beta)$ under the assumption of Section \ref{sec:bounds},
\begin{equation}\label{eqn:rho-hweighted}
\rho_{\bu^*}(\eta) = \frac{\ip{H^*\eta,\eta}_h}{\ip{B^*\eta,\eta}_h}, \qquad H^* := A_{3\beta}(\bu^*) - \lambda_1(A_\beta(\bu^*))\mathbb I, \qquad B^* := B(\bu^*),
\end{equation}
where the constrained Hessian $H^*$ is independent of the metric. Bounding the Rayleigh quotient thus amounts to comparing the metric operator $B^*$ with $H^*$. Appendix \ref{app:riemannian-connection} gives the Hessian at arbitrary points, which requires the Riemannian connection, and its Lemma \ref{lem:riemannian-hess} reduces to \eqref{eqn:second-variation} at a critical point.

Lemma \ref{lem:second-variation} requires a metric that is smooth near $\bu^*$. This holds for the first five metrics of \eqref{eqn:six-metrics}, and for the Thomas--Fermi metric exactly when $\mathbb V_i\neq\lambda_1(A_\beta)$ for every node $i$ (Appendix \ref{app:riemannian-connection}). If $\mathbb V_i = \lambda_1(A_\beta)$ at some node, then $B_{\mathrm{TF}}$ is Lipschitz but not differentiable at $\bu^*$, and we define $\Hess E(\bu^*)$ as the operator on $T_{\bu^*}\mathcal M$ that is self-adjoint with respect to $g_{\bu^*}$ and satisfies $g_{\bu^*}(\Hess E(\bu^*)[\eta],\xi) = \ip{H^*\eta,\xi}_h$ for all $\eta,\xi\in T_{\bu^*}\mathcal M$. Whenever the metric is smooth near $\bu^*$, this operator is the Riemannian Hessian by Lemma \ref{lem:second-variation} and the polarization identity \citep[(5.36)]{absil2008optimization}. Thus \eqref{eqn:second-variation} and \eqref{eqn:rho-hweighted} hold for every metric, and all results of this section use only \eqref{eqn:rho-hweighted}.

\subsection{Two-sided bounds at the ground state}\label{sec:bounds}
Throughout this section $\bu^*$ is the ground state. We assume that $\bu^*$ is an eigenvector of the smallest eigenvalue $\lambda_1(A_\beta)$ of $A_\beta$ and that this eigenvalue is simple, so that $\lambda(\bu^*) = \lambda_1(A_\beta)$. Both properties hold under Assumption \ref{asp:monotone} by Lemma \ref{lem:perron}. We write
\begin{equation}\label{eqn:defect}
Z := H^* - B^*, \qquad \zeta_+ := \max_i (Z_{ii})_+, \qquad \zeta_- := \max_i(-Z_{ii})_+ ,
\end{equation}
for the difference between the two operators and its extreme entries. It is a diagonal matrix for all six metrics and is listed in Table \ref{tab:constants}. We also write
\begin{equation}\label{eqn:bT}
b_T := \min_{\eta\in T_{\bu^*}\mathcal M\setminus\{0\}}\frac{\ip{B^*\eta,\eta}_h}{\ip{\eta,\eta}_h}\ \geq\ \lambda_1(B^*), \qquad
\sigma_T := \min_{\eta\in T_{\bu^*}\mathcal M\setminus\{0\}}\frac{\ip{H^*\eta,\eta}_h}{\ip{\eta,\eta}_h},
\end{equation}
for the smallest eigenvalues of the quadratic forms of $B^*$ and of $H^*$ on the tangent space.

\begin{lem}[Coercivity of the constrained Hessian]\label{lem:coercive}
Let $\Lambda := \max\{\lambda_2(A_\beta),\lambda_1(A_{3\beta})\}$ and $\sigma := \Lambda - \lambda_1(A_\beta)$. Then $\sigma_T\geq\sigma>0$, and $b_T = \lambda_2(A_\beta)$ for the $a_u$ metric.
\end{lem}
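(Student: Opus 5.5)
We need two lower bounds for $\sigma_T$, namely $\sigma_T\geq\lambda_2(A_\beta)-\lambda_1(A_\beta)$ and $\sigma_T\geq\lambda_1(A_{3\beta})-\lambda_1(A_\beta)$; then strict positivity, and the identity for $b_T$.

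For the first bound, use $A_{3\beta}=A_\beta+2\beta\diag((\bu^*)^2)$, so that for every $\eta$
\[
\ip{A_{3\beta}\eta,\eta}_h=\ip{A_\beta\eta,\eta}_h+2\beta\ip{(\bu^*)^2\eta,\eta}_h\geq\ip{A_\beta\eta,\eta}_h .
\]
The tangent space $T_{\bu^*}\mathcal M$ is the $\ip{\cdot,\cdot}_h$-orthogonal complement of $\bu^*$. By the standing assumption, $\bu^*$ is an eigenvector of $\lambda_1(A_\beta)$, and $A_\beta$ is self-adjoint in $\ip{\cdot,\cdot}_h$. Courant--Fischer therefore gives $\ip{A_\beta\eta,\eta}_h\geq\lambda_2(A_\beta)\ip{\eta,\eta}_h$ on the tangent space. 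Hence $\sigma_T\geq\lambda_2(A_\beta)-\lambda_1(A_\beta)$.

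For the second bound, apply the min-max principle to $A_{3\beta}$ on all of $\mathbb R^N$. This gives $\ip{A_{3\beta}\eta,\eta}_h\geq\lambda_1(A_{3\beta})\ip{\eta,\eta}_h$ for every $\eta$, in particular on the tangent space, so $\sigma_T\geq\lambda_1(A_{3\beta})-\lambda_1(A_\beta)$. Taking the maximum of the two bounds yields $\sigma_T\geq\sigma$.

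Positivity of $\sigma$ then follows from the simplicity of $\lambda_1(A_\beta)$, which is part of the standing assumption of this section. Simplicity gives $\lambda_2(A_\beta)>\lambda_1(A_\beta)$, hence $\Lambda-\lambda_1(A_\beta)\geq\lambda_2(A_\beta)-\lambda_1(A_\beta)>0$. No separate argument comparing $\lambda_1(A_{3\beta})$ with $\lambda_1(A_\beta)$ is needed.

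For the $a_u$ metric we have $B^*=A_\beta$, so $b_T$ is the minimum of the Rayleigh quotient of $A_\beta$ over the $h$-orthogonal complement of an eigenvector of $\lambda_1(A_\beta)$. By Courant--Fischer this minimum is exactly $\lambda_2(A_\beta)$: the lower bound was shown above, and equality is attained at an eigenvector of $\lambda_2$ chosen $h$-orthogonal to $\bu^*$. Such a choice is possible because eigenvectors of a self-adjoint operator can be taken orthonormal, and by simplicity $\bu^*$ spans the first eigenspace.

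The only delicate point is to use the self-adjointness of $A_\beta$ in $\ip{\cdot,\cdot}_h$ rather than in the Euclidean inner product, since $-\Delta_h$ is not symmetric in general. This is exactly why the tangent space is the $h$-orthogonal complement of $\bu^*$, which makes the min-max argument apply. Everything else is routine.
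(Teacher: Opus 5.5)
Your proof is correct and follows the paper's argument. Courant--Fischer in $\ip{\cdot,\cdot}_h$ on the orthogonal complement of $\bu^*$ gives $b_T=\lambda_2(A_\beta)$ and, via $A_{3\beta}=A_\beta+2\beta\diag((\bu^*)^2)$, the gap bound, and the global min-max bound for $A_{3\beta}$ gives the other bound. You are also right that $\sigma>0$ already follows from $\lambda_2(A_\beta)>\lambda_1(A_\beta)$, so the paper's extra proof of $\lambda_1(A_{3\beta})>\lambda_1(A_\beta)$ is not needed for this lemma; it is needed later, for the last bound on $\kappa_{a_u}(\bu^*)$ in Corollary~\ref{coro:conditioning}.
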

\begin{proof}
The tangent space is the $\ip{\cdot,\cdot}_h$-orthogonal complement of $\bu^*$, the eigenvector of the simple smallest eigenvalue of $A_\beta$, so the Courant--Fischer principle in $\ip{\cdot,\cdot}_h$ gives $\ip{A_\beta\eta,\eta}_h\geq\lambda_2(A_\beta)\ip{\eta,\eta}_h$ on $T_{\bu^*}\mathcal M$, with equality for the second eigenvector, which is the claim on $b_T$. Writing $H^* = A_\beta - \lambda_1(A_\beta)\mathbb I + 2\beta\diag((\bu^*)^2)$ with $\diag((\bu^*)^2)\succeq0$ gives $\sigma_T\geq\lambda_2(A_\beta) - \lambda_1(A_\beta)$, while $H^*\succeq\lambda_1(A_{3\beta}) - \lambda_1(A_\beta)$ on all of $\mathbb R^N$, so $\sigma_T\geq\sigma$. Finally $\lambda_1(A_{3\beta})>\lambda_1(A_\beta)$. Let $\bv$ be a normalized eigenvector of $A_{3\beta}$ for $\lambda_1(A_{3\beta})$. Since $A_{3\beta}-A_\beta = 2\beta\diag((\bu^*)^2)\succeq0$, $\lambda_1(A_\beta)\leq\ip{A_\beta\bv,\bv}_h\leq\ip{A_{3\beta}\bv,\bv}_h = \lambda_1(A_{3\beta})$. If $\lambda_1(A_{3\beta}) = \lambda_1(A_\beta)$, then $\bv$ attains the minimum of the Rayleigh quotient of $A_\beta$, so $\bv = \pm\bu^*$ by simplicity, and then $\lambda_1(A_{3\beta}) = \ip{A_{3\beta}\bu^*,\bu^*}_h = \lambda_1(A_\beta) + 2\beta\ip{(\bu^*)^2,(\bu^*)^2}_h > \lambda_1(A_\beta)$, a contradiction. With $\lambda_2(A_\beta)>\lambda_1(A_\beta)$ this gives $\sigma>0$.
\end{proof}

\begin{lem}[Discrete maximum principle]\label{lem:max-principle}
Under Assumption \ref{asp:monotone}, $\beta\max_i(\bu^*_i)^2 = \beta(\bu^*_{i_0})^2\leq\lambda_1(A_\beta) - \mathbb V_{i_0}\leq\lambda_1(A_\beta)$, where $i_0$ is an index at which $\bu^*_i$ is maximal.
\end{lem}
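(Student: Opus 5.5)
We evaluate the eigenvalue equation \eqref{eqn:matrix-form-1} at the node $i_0$ where the positive ground state attains its maximum. By Lemma \ref{lem:perron}, $\bu^*$ is strictly positive and $A_\beta\bu^* = \lambda_1(A_\beta)\bu^*$. In matrix form this reads $\mathbb S\bu^* + \mathbb M\mathbb V\bu^* + \beta\mathbb M(\bu^*)^3 = \lambda_1(A_\beta)\mathbb M\bu^*$. Taking the $i_0$-th row and dividing by $w_{i_0}\bu^*_{i_0}>0$ gives
\[
\frac{(\mathbb S\bu^*)_{i_0}}{w_{i_0}\bu^*_{i_0}} + \mathbb V_{i_0} + \beta(\bu^*_{i_0})^2 = \lambda_1(A_\beta).
\]
The claim then follows once we show $(\mathbb S\bu^*)_{i_0}\geq0$. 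The final inequality $\lambda_1(A_\beta)-\mathbb V_{i_0}\leq\lambda_1(A_\beta)$ is immediate from $V\geq0$.

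The key step is the sign of $(\mathbb S\bu^*)_{i_0}$, and here we use Assumption \ref{asp:monotone}. We write
\[
(\mathbb S\bu^*)_{i_0} = \Big(\sum_j \mathbb S_{i_0 j}\Big)\bu^*_{i_0} + \sum_{j\neq i_0}\mathbb S_{i_0 j}\big(\bu^*_j - \bu^*_{i_0}\big).
\]
The first term is nonnegative, because the row sums of $\mathbb S$ are nonnegative and $\bu^*_{i_0}>0$. In the second term each summand is a product of a nonpositive off-diagonal entry $\mathbb S_{i_0j}\leq0$ and a nonpositive difference $\bu^*_j-\bu^*_{i_0}\leq0$, the latter by maximality of $\bu^*_{i_0}$. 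Each summand is therefore nonnegative. Hence $(\mathbb S\bu^*)_{i_0}\geq0$, which yields $\beta(\bu^*_{i_0})^2\leq\lambda_1(A_\beta)-\mathbb V_{i_0}$.

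There is no genuine obstacle. The only point needing care is that the row-sum condition is stated for the interior (Dirichlet) stiffness matrix $\mathbb S$, so boundary nodes contribute only through a larger diagonal. This is precisely what the nonnegative row-sum assumption encodes, and no separate boundary argument is needed. Positivity of $\bu^*$ is used only to divide by $\bu^*_{i_0}$ and to sign the row-sum term.
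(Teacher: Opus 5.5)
Your proof is correct and follows the paper's argument: you bound $(\mathbb S\bu^*)_{i_0}$ below by the nonnegative row-sum term, using $\mathbb S_{i_0j}\leq0$ and the maximality of $\bu^*_{i_0}$, and then read off the $i_0$-th component of $A_\beta\bu^* = \lambda_1(A_\beta)\bu^*$. Positivity of $\bu^*$ is also what gives $\max_i(\bu^*_i)^2 = (\bu^*_{i_0})^2$ for an index $i_0$ maximizing $\bu^*_i$, a point that both you and the paper leave implicit.
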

\begin{proof}
Since $\bu^*>0$ by Lemma \ref{lem:perron}, $\mathbb S_{i_0j}\leq0$ and $\bu^*_j\leq\bu^*_{i_0}$ for $j\neq i_0$, and the row sums of $\mathbb S$ are nonnegative, $(\mathbb S\bu^*)_{i_0}\geq(\sum_j\mathbb S_{i_0j})\bu^*_{i_0}\geq0$, hence $(-\Delta_h\bu^*)_{i_0}\geq0$, and the $i_0$-th component of $A_\beta\bu^* = \lambda_1(A_\beta)\bu^*$ gives $\beta(\bu^*_{i_0})^2\leq\lambda_1(A_\beta) - \mathbb V_{i_0}$, with $\mathbb V\geq0$.
\end{proof}

\begin{thm}[Two-sided bounds of the Rayleigh quotient]\label{thm:rayleigh-quotient-bounds}
Let $\bu^*$ be the ground state. For every metric and every $\eta\in T_{\bu^*}\mathcal M\setminus\{0\}$,
\begin{equation}\label{eqn:two-sided}
\frac{\sigma_T}{\sigma_T+\zeta_-}\ \leq\ \rho_{\bu^*}(\eta)\ \leq\ 1+\frac{\zeta_+}{b_T}, \qquad\text{thus}\qquad
\kappa(\bu^*)\ \leq\ \Big(1+\frac{\zeta_+}{b_T}\Big)\Big(1+\frac{\zeta_-}{\sigma}\Big)
\end{equation}
for the condition number $\kappa(\bu^*)$ of $\Hess E(\bu^*)$, the ratio of its largest to its smallest eigenvalue.
\end{thm}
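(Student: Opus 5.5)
The plan is to work entirely with the representation \eqref{eqn:rho-hweighted}, $\rho_{\bu^*}(\eta) = \ip{H^*\eta,\eta}_h/\ip{B^*\eta,\eta}_h$. We write $H^* = B^* + Z$ with $Z$ diagonal. Every estimate then follows from the entrywise bounds $-\zeta_-\mathbb I\preceq Z\preceq\zeta_+\mathbb I$, understood in $\ip{\cdot,\cdot}_h$ with $\mathbb M$ diagonal positive. These give $-\zeta_-\ip{\eta,\eta}_h\leq\ip{Z\eta,\eta}_h\leq\zeta_+\ip{\eta,\eta}_h$ for every $\eta$.

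For the upper bound, take $\eta\in T_{\bu^*}\mathcal M\setminus\{0\}$. Then
\[
\rho_{\bu^*}(\eta) = 1 + \frac{\ip{Z\eta,\eta}_h}{\ip{B^*\eta,\eta}_h}\leq 1+\frac{\zeta_+\ip{\eta,\eta}_h}{\ip{B^*\eta,\eta}_h}\leq 1+\frac{\zeta_+}{b_T},
\]
using the definition \eqref{eqn:bT} of $b_T$ as the minimum of $\ip{B^*\eta,\eta}_h/\ip{\eta,\eta}_h$ on the tangent space. Here $b_T>0$ because $B^*$ is positive definite.

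For the lower bound, use $B^* = H^* - Z\preceq H^* + \zeta_-\mathbb I$. This gives $\ip{B^*\eta,\eta}_h\leq\ip{H^*\eta,\eta}_h+\zeta_-\ip{\eta,\eta}_h$. The numerator satisfies $\ip{H^*\eta,\eta}_h\geq\sigma_T\ip{\eta,\eta}_h>0$ on the tangent space by Lemma~\ref{lem:coercive}. Setting $t := \ip{H^*\eta,\eta}_h/\ip{\eta,\eta}_h\geq\sigma_T$, we get
\[
\rho_{\bu^*}(\eta)\geq\frac{t}{t+\zeta_-}.
\]
The map $t\mapsto t/(t+\zeta_-)$ is nondecreasing for $t>0$ and $\zeta_-\geq0$. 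Hence $\rho_{\bu^*}(\eta)\geq\sigma_T/(\sigma_T+\zeta_-)$. This monotonicity step is the only point needing care: one must bound $B^*$ in terms of $H^*$ rather than in terms of $\ip{\eta,\eta}_h$ alone.

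For the condition number, the Hessian is $g_{\bu^*}$-self-adjoint, so its extreme eigenvalues are the extreme values of $\rho_{\bu^*}$. Its largest eigenvalue is therefore at most $1+\zeta_+/b_T$. Its smallest eigenvalue is at least $\sigma_T/(\sigma_T+\zeta_-)>0$. The ratio gives
\[
\kappa(\bu^*)\leq\Big(1+\frac{\zeta_+}{b_T}\Big)\Big(1+\frac{\zeta_-}{\sigma_T}\Big).
\]
Finally $1+\zeta_-/\sigma_T\leq1+\zeta_-/\sigma$, since $\sigma_T\geq\sigma>0$ by Lemma~\ref{lem:coercive}.

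I do not expect a real obstacle. The statement reduces to these diagonal comparisons once \eqref{eqn:rho-hweighted} is available for every metric, including the nonsmooth Thomas--Fermi case by the convention adopted after Lemma~\ref{lem:second-variation}.
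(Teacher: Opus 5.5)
Your proof is correct and follows the paper's argument. You use the same splitting $H^*=B^*+Z$ with the diagonal bounds $-\zeta_-\ip{\eta,\eta}_h\leq\ip{Z\eta,\eta}_h\leq\zeta_+\ip{\eta,\eta}_h$, the comparison $H^*\preceq B^*+\zeta_+\mathbb I$ with $b_T$ for the upper bound, $B^*\preceq H^*+\zeta_-\mathbb I$ for the lower bound, and $\sigma_T\geq\sigma$ at the end; your monotonicity step for $t\mapsto t/(t+\zeta_-)$ is just an unpacked form of the paper's quadratic-form inequality $H^*+\zeta_-\mathbb I\preceq(1+\zeta_-/\sigma_T)H^*$ on $T_{\bu^*}\mathcal M$.
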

\begin{proof}
Since $Z$ and $\mathbb M$ are diagonal and $\mathbb M$ is positive, $-\zeta_-\ip{\eta,\eta}_h\leq\ip{Z\eta,\eta}_h\leq\zeta_+\ip{\eta,\eta}_h$. On $T_{\bu^*}\mathcal M$ therefore $H^* = B^*+Z\preceq B^*+\zeta_+\mathbb I\preceq(1+\zeta_+/b_T)B^*$ and $B^* = H^*-Z\preceq H^*+\zeta_-\mathbb I\preceq(1+\zeta_-/\sigma_T)H^*$ as quadratic forms, which by \eqref{eqn:rho-hweighted} are the two bounds in \eqref{eqn:two-sided}. The bound on $\kappa(\bu^*)$ follows from $\sigma_T\geq\sigma$.
\end{proof}

\begin{table}[t]
\centering\footnotesize
\setlength{\tabcolsep}{4pt}
\begin{tabular}{lllll}
\hline
metric & $Z = H^* - B^*$ & $\zeta_-\leq$ & $b_T\geq$ & $\zeta_+\leq$ (Assumption \ref{asp:monotone})\\
\hline
modified $H^1$ & $\mathbb V + 3\beta\diag((\bu^*)^2) - (\lambda_1+\alpha)\mathbb I$ & $\lambda_1+\alpha$ & $\lambda_1(-\Delta_h)+\alpha$ & $(\max_i\mathbb V_{i} + 2\lambda_1 - \alpha)_+$\\
$a_0$ & $3\beta\diag((\bu^*)^2) - \lambda_1\mathbb I$ & $\lambda_1$ & $\lambda_1(-\Delta_h+\mathbb V)$ & $2\lambda_1$\\
$a_u$ & $2\beta\diag((\bu^*)^2) - \lambda_1\mathbb I$ & $\lambda_1$ & $\lambda_2(A_\beta)$ & $\lambda_1$\\
\hline
modified $a_0$ & $\mathbb V_2 + 3\beta\diag((\bu^*)^2) - \lambda_1\mathbb I$ & $\lambda_1+\mathcal V_-$ & $\lambda_1(A_1)$ & $(\mathcal V_+ + 2\lambda_1)_+$\\
modified $a_u$ & $-\lambda_1\mathbb I$ & $\lambda_1$ & $\lambda_1+\sigma_T$ & $0$\\
Thomas--Fermi & $Z_{\mathrm{TF}}$ of \eqref{eqn:ZTF} & $e_{\mathrm{TF}}+\ell\lambda_1^{1/3}$ & $\ell\lambda_1^{1/3}$ & $(e_{\mathrm{TF}}-\ell\lambda_1^{1/3})_+$\\
\hline
\end{tabular}
\caption{The difference $Z = H^* - B^*$ at the ground state and bounds on $\zeta_\pm$ of \eqref{eqn:defect} and $b_T$ of \eqref{eqn:bT}. Here $\lambda_1 = \lambda_1(A_\beta)$, $\mathcal V_\pm := \max_i(\pm(\mathbb V_2)_{ii})_+$ and $e_{\mathrm{TF}}$ is the Thomas--Fermi error \eqref{eqn:tf-defect}. The last column uses Lemma \ref{lem:max-principle}, and the Thomas--Fermi entries hold without Assumption \ref{asp:monotone}. The entries $b_T$ of the $a_u$ and modified $a_u$ metrics and $\zeta_\pm$ of the modified $a_u$ metric are exact.}
\label{tab:constants}
\end{table}

\begin{coro}[Condition numbers of the classical metrics]\label{coro:conditioning}
Let $\bu^*$ be the ground state, let $\Lambda$ be as in Lemma \ref{lem:coercive}, and let Assumption \ref{asp:monotone} hold. Then
\begin{align*}
\kappa_{a_u}(\bu^*) &\leq \Big(1 + \frac{\zeta_+}{\lambda_2(A_\beta)}\Big)\frac{\Lambda}{\Lambda - \lambda_1(A_\beta)} \leq \Big(1 + \frac{\lambda_1(A_\beta)}{\lambda_2(A_\beta)}\Big)\frac{\Lambda}{\Lambda - \lambda_1(A_\beta)} \leq \frac{2\,\lambda_1(A_{3\beta})}{\lambda_1(A_{3\beta}) - \lambda_1(A_\beta)}, \\
\kappa_{a_0}(\bu^*) &\leq \Big(1 + \frac{2\lambda_1(A_\beta)}{\lambda_1(-\Delta_h+\mathbb V)}\Big)\frac{\Lambda}{\Lambda - \lambda_1(A_\beta)}, \\
\kappa_{H^1}(\bu^*) &\leq \Big(1 + \frac{(\max_i \mathbb V_i + 2\lambda_1(A_\beta) - \alpha)_+}{\lambda_1(-\Delta_h)+\alpha}\Big)\frac{\Lambda + \alpha}{\Lambda - \lambda_1(A_\beta)},
\end{align*}
and for the $a_u$ metric $1 - \lambda_1(A_\beta)/\Lambda\leq\rho_{\bu^*}(\eta)\leq1+\lambda_1(A_\beta)/\lambda_2(A_\beta)<2$ on $T_{\bu^*}\mathcal M\setminus\{0\}$, with $\zeta_+ = 0$ and the spectrum in $[1-\lambda_1(A_\beta)/\Lambda,\ 1]$ as soon as $2\beta\max_i(\bu^*_i)^2\leq\lambda_1(A_\beta)$. Without Assumption \ref{asp:monotone}, the same bounds hold with $\zeta_+ = \max_i(Z_{ii})_+$ of Table \ref{tab:constants} in place of the numerators $\lambda_1(A_\beta)$, $2\lambda_1(A_\beta)$ and $(\max_i\mathbb V_i+2\lambda_1(A_\beta)-\alpha)_+$, and the lower bound $1-\lambda_1(A_\beta)/\Lambda$ is unchanged. In particular, the bound $\rho_{\bu^*}(\eta)<2$ for the $a_u$ metric becomes $\rho_{\bu^*}(\eta)\leq1+\zeta_+/\lambda_2(A_\beta)$, and the factor $2$ in the last bound on $\kappa_{a_u}(\bu^*)$ becomes $1+\zeta_+/\lambda_2(A_\beta)$.
\end{coro}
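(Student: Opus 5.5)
The plan is to specialize Theorem \ref{thm:rayleigh-quotient-bounds} metric by metric, using the entries of Table \ref{tab:constants} for $Z$, $\zeta_\pm$ and $b_T$, and to sharpen the lower bound by one small refinement. The first step is to verify the table rows. For each metric I would subtract $B^*$ from $H^* = A_{3\beta}-\lambda_1\mathbb I$ to get the diagonal $Z$. Since $\mathbb V\geq0$ and $\beta(\bu^*)^2\geq0$, the entries satisfy $-Z_{ii}\leq\lambda_1$ for $a_u$ and $a_0$, and $-Z_{ii}\leq\lambda_1+\alpha$ for modified $H^1$. Lemma \ref{lem:max-principle} gives $\beta(\bu^*_i)^2\leq\lambda_1$. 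Hence $Z_{ii}\leq\lambda_1$ for $a_u$, $Z_{ii}\leq 2\lambda_1$ for $a_0$, and $Z_{ii}\leq\max\mathbb V+2\lambda_1-\alpha$ for $H^1$. The bound $b_T\geq\lambda_1(B^*)$ follows by Courant--Fischer, and $b_T=\lambda_2(A_\beta)$ for $a_u$ by Lemma \ref{lem:coercive}.

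For the lower bound I would not use the crude factor $\sigma/(\sigma+\zeta_-)$ directly. Instead I would aim at $\Lambda/(\Lambda-\lambda_1)$ as the reciprocal of the lower Rayleigh quotient bound. For $a_u$ and $a_0$ the negative part of $Z$ is exactly $-\lambda_1\mathbb I$ plus a nonnegative diagonal, so $B^*\preceq H^*+\lambda_1\mathbb I$ on $T_{\bu^*}\mathcal M$. Combined with $\ip{H^*\eta,\eta}_h\geq\sigma\ip{\eta,\eta}_h$ from Lemma \ref{lem:coercive}, this gives $\ip{B^*\eta,\eta}_h\leq(1+\lambda_1/\sigma)\ip{H^*\eta,\eta}_h$. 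Since $1+\lambda_1/\sigma=\Lambda/(\Lambda-\lambda_1)$, we get $\rho\geq 1-\lambda_1/\Lambda$. For modified $H^1$ the same argument with $\zeta_-\leq\lambda_1+\alpha$ gives $1+(\lambda_1+\alpha)/\sigma=(\Lambda+\alpha)/(\Lambda-\lambda_1)$. Multiplying by the upper bounds $1+\zeta_+/b_T$ yields the three displayed estimates, with the first $a_u$ inequality keeping $\zeta_+$ and the second inserting $\zeta_+\leq\lambda_1$.

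The main obstacle is the last $a_u$ inequality, $\bigl(1+\lambda_1/\lambda_2\bigr)\Lambda/(\Lambda-\lambda_1)\leq 2\lambda_1(A_{3\beta})/(\lambda_1(A_{3\beta})-\lambda_1)$. I would use $1+\lambda_1/\lambda_2<2$, which holds since $\lambda_2>\lambda_1\geq0$ by simplicity, together with the monotonicity of $t\mapsto t/(t-\lambda_1)$, which is decreasing for $t>\lambda_1$. Since $\Lambda\geq\lambda_1(A_{3\beta})>\lambda_1$ by Lemma \ref{lem:coercive}, this gives $\Lambda/(\Lambda-\lambda_1)\leq\lambda_1(A_{3\beta})/(\lambda_1(A_{3\beta})-\lambda_1)$.

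The spectral statements for $a_u$ then follow. We have $\rho\leq1+\lambda_1/\lambda_2<2$. If $2\beta\max(\bu^*_i)^2\leq\lambda_1$, every $Z_{ii}\leq0$, so $\zeta_+=0$ and $\rho\leq1$.

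Without Assumption \ref{asp:monotone}, Lemma \ref{lem:max-principle} is the only step lost, so I would replace each numerator by $\zeta_+=\max_i(Z_{ii})_+$ directly from Theorem \ref{thm:rayleigh-quotient-bounds}. The lower bound used only $\mathbb V\geq0$, $\beta(\bu^*)^2\geq0$ and Lemma \ref{lem:coercive}. Lemma \ref{lem:coercive} needs only simplicity of $\lambda_1(A_\beta)$, which is assumed throughout Section \ref{sec:bounds}, so the lower bound is unchanged.
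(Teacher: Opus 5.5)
Your proposal is correct and follows the paper's proof: read $Z$, $\zeta_\pm$ and $b_T$ off Table~\ref{tab:constants}, using $\mathbb V\geq0$, $(\bu^*_i)^2\geq0$ and Lemma~\ref{lem:max-principle}, then apply Theorem~\ref{thm:rayleigh-quotient-bounds}, and finish the last $a_u$ inequality with $1+\lambda_1/\lambda_2<2$ and the monotonicity of $t\mapsto t/(t-\lambda_1)$. Your \emph{refinement} of the lower bound is not actually new. The step $B^*\preceq H^*+\lambda_1\mathbb I$ together with $\sigma_T\geq\sigma$ is exactly the theorem's bound $\sigma_T/(\sigma_T+\zeta_-)$ with $\zeta_-\leq\lambda_1$ (or $\lambda_1+\alpha$ for the modified $H^1$ metric) inserted, which is what the paper does.
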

\begin{proof}
The matrices $Z$ in Table \ref{tab:constants} are read off \eqref{eqn:six-metrics}. The bounds on $\zeta_-$ follow from $(\bu^*_i)^2\geq0$ and $\mathbb V\geq0$, the bounds on $b_T$ follow from \eqref{eqn:bT} and Lemma \ref{lem:coercive}, and the bounds on $\zeta_+$ follow from Lemma \ref{lem:max-principle}, for instance $2\beta(\bu^*_i)^2 - \lambda_1(A_\beta)\leq\lambda_1(A_\beta) - 2\mathbb V_{i_0}\leq\lambda_1(A_\beta)$ and $3\beta(\bu^*_i)^2 - \lambda_1(A_\beta)\leq2\lambda_1(A_\beta)$. Theorem \ref{thm:rayleigh-quotient-bounds} gives the bounds, with $\sigma/(\sigma+\lambda_1(A_\beta)) = 1-\lambda_1(A_\beta)/\Lambda$ and $1+(\lambda_1(A_\beta)+\alpha)/\sigma = (\Lambda+\alpha)/(\Lambda-\lambda_1(A_\beta))$ by $\sigma = \Lambda-\lambda_1(A_\beta)$. The last inequality for $\kappa_{a_u}(\bu^*)$ follows from $1+\lambda_1(A_\beta)/\lambda_2(A_\beta)<2$ and from the fact that $\Lambda/(\Lambda-\lambda_1(A_\beta))$ decreases in $\Lambda\geq\lambda_1(A_{3\beta})$.
\end{proof}

For the $a_u$ metric in the harmonic trap $V = |\bx|^2$, Appendix \ref{app:spectra} compares these bounds with the exact extreme eigenvalues of $\Hess E(\bu^*)$ in one, two and three dimensions. For $\beta\geq10^2$, the lower bound $1-\lambda_1(A_\beta)/\Lambda$ is within $9\%$ of the exact minimum in one dimension and within $21\%$ in two and three dimensions. The bound $1-\lambda_1(A_\beta)/\lambda_2(A_\beta)$ based on the spectral gap alone is smaller than the exact minimum by a factor up to $3\times10^4$.

\subsection{The modified $a_0$ metric}\label{sec:modified-a0}
Consider a splitting $V = V_1+V_2$ as in Section \ref{sec:classical-metrics}, where $V_1\geq0$ is separable, $V_1(\bx) = \sum_{j=1}^d v_j(x_j)$, and $V_2$ is bounded. Then the operator $A_1 = -\Delta_h+\mathbb V_1$ can be inverted exactly and efficiently on GPU by tensor-product direct solvers \citep{liu2026gpu}. When $V_1$ is confining, that is, $V_1(\bx)\to\infty$ as $|\bx|\to\infty$, the preconditioned operator $A_1^{-1}(-\Delta_h+\mathbb V) = \mathbb I+A_1^{-1}\mathbb V_2$ has eigenvalues clustered at one and a condition number bounded independently of the mesh size and the domain size \citep[Theorems~3.4 and~3.5]{liu2026gpu}. This motivates taking $A_1$ as the metric operator itself, which defines the modified $a_0$ metric \eqref{eqn:six-metrics}. By \eqref{eqn:rho-hweighted} the Rayleigh quotient of the Riemannian Hessian for this metric at the ground state is $1+\ip{Z\eta,\eta}_h/\ip{A_1\eta,\eta}_h$ with $Z = \mathbb V_2+3\beta\diag((\bu^*)^2)-\lambda_1(A_\beta)\mathbb I$, which is the matrix $Z$ of the $a_0$ metric plus the nonseparable part of the potential. 

\begin{prop}[Condition number of the modified $a_0$ metric]\label{prop:modified-a0}
Let $\bu^*$ be the ground state, $\Lambda$ as in Lemma \ref{lem:coercive} and $\mathcal V_\pm = \max_i(\pm(\mathbb V_2)_{ii})_+$. Under the modified $a_0$ metric, denoted by $\widetilde a_0$ in subscripts, for all $\eta\in T_{\bu^*}\mathcal M\setminus\{0\}$,
\[
\frac{\Lambda-\lambda_1(A_\beta)}{\Lambda+\mathcal V_-}\ \leq\ \rho_{\bu^*}(\eta)\ \leq\ 1+\frac{\max_i\big((\mathbb V_2)_{ii}+3\beta(\bu^*_i)^2-\lambda_1(A_\beta)\big)_+}{\lambda_1(A_1)},
\]
and under Assumption \ref{asp:monotone}
\[
\kappa_{\widetilde a_0}(\bu^*)\ \leq\ \Big(1+\frac{(\mathcal V_++2\lambda_1(A_\beta))_+}{\lambda_1(A_1)}\Big)\frac{\Lambda+\mathcal V_-}{\Lambda-\lambda_1(A_\beta)}.
\]
If $V_2\geq0$, then $\kappa_{\widetilde a_0}(\bu^*)\leq\big(1+\mathcal V_+/\lambda_1(A_1)\big)\,\kappa_{a_0}(\bu^*)$.
\end{prop}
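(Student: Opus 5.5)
The plan is to apply Theorem \ref{thm:rayleigh-quotient-bounds} with $B^* = A_1$, which does not depend on $\bu$. Then $Z = H^* - A_1 = \mathbb V_2 + 3\beta\diag((\bu^*)^2) - \lambda_1(A_\beta)\mathbb I$, as recorded in Table \ref{tab:constants}. We note that $\mathbb M A_1 = \mathbb S + \mathbb M\mathbb V_1$ is symmetric positive definite because $\mathbb S$ is and $\mathbb V_1\geq0$, so $\lambda_1(A_1)>0$.

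\emph{Constants.} The three constants of the theorem are estimated as follows.
\begin{itemize}
\item For $b_T$, the Courant--Fischer principle in $\ip{\cdot,\cdot}_h$, restricted to the tangent space, gives $b_T\geq\lambda_1(A_1)$.
\item For $\zeta_+$, the definition \eqref{eqn:defect} gives exactly $\zeta_+ = \max_i\big((\mathbb V_2)_{ii}+3\beta(\bu^*_i)^2-\lambda_1(A_\beta)\big)_+$. Inserting these two facts into the upper bound of \eqref{eqn:two-sided} yields the stated upper bound.
\item For $\zeta_-$, we use $3\beta(\bu^*_i)^2\geq0$ and $-(\mathbb V_2)_{ii}\leq\mathcal V_-$ to get $\zeta_-\leq\lambda_1(A_\beta)+\mathcal V_-$.
\end{itemize}

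\emph{Lower bound and condition number.} The map $s\mapsto s/(s+\zeta_-)$ is increasing, and Lemma \ref{lem:coercive} gives $\sigma_T\geq\sigma=\Lambda-\lambda_1(A_\beta)$. Hence
\[
\frac{\sigma_T}{\sigma_T+\zeta_-}\ \geq\ \frac{\Lambda-\lambda_1(A_\beta)}{\Lambda-\lambda_1(A_\beta)+\lambda_1(A_\beta)+\mathcal V_-} \;=\; \frac{\Lambda-\lambda_1(A_\beta)}{\Lambda+\mathcal V_-},
\]
which is the stated lower bound. Under Assumption \ref{asp:monotone}, Lemma \ref{lem:max-principle} gives $3\beta(\bu^*_i)^2\leq3\lambda_1(A_\beta)$. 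Therefore each entry satisfies $(\mathbb V_2)_{ii}+3\beta(\bu^*_i)^2-\lambda_1(A_\beta)\leq\mathcal V_++2\lambda_1(A_\beta)$, so $\zeta_+\leq(\mathcal V_++2\lambda_1(A_\beta))_+$. The condition number bound then follows by dividing the upper bound by the lower bound.

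\emph{Comparison with the $a_0$ metric.} For the final claim I would not compare the two closed-form bounds. Instead I would compare the Rayleigh quotients directly, which gives the statement for the true condition numbers. If $V_2\geq0$, then as quadratic forms
\[
A_1\ \preceq\ -\Delta_h+\mathbb V \;=\; A_1+\mathbb V_2\ \preceq\ A_1+\mathcal V_+\mathbb I\ \preceq\ \Big(1+\frac{\mathcal V_+}{\lambda_1(A_1)}\Big)A_1 .
\]
Write $c := 1+\mathcal V_+/\lambda_1(A_1)$. By \eqref{eqn:rho-hweighted} the two Rayleigh quotients share the numerator $\ip{H^*\eta,\eta}_h$, which is positive on the tangent space by Lemma \ref{lem:coercive}. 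Consequently $\rho_{a_0}(\eta)\leq\rho_{\widetilde a_0}(\eta)\leq c\,\rho_{a_0}(\eta)$ for every tangent $\eta$. Taking the maximum and the minimum over $\eta$ gives $\kappa_{\widetilde a_0}(\bu^*)\leq c\,\kappa_{a_0}(\bu^*)$.

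\emph{Main obstacle.} The only delicate step is this last comparison. Comparing the closed-form bounds would not match the statement's phrasing, and the factorization of the upper bound would not be clean. The direct sandwich argument avoids both issues, and the rest of the proof is bookkeeping with Theorem \ref{thm:rayleigh-quotient-bounds}.
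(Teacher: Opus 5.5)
Your proposal is correct and follows the paper's proof essentially step for step. You apply Theorem \ref{thm:rayleigh-quotient-bounds} with $Z=\mathbb V_2+3\beta\diag((\bu^*)^2)-\lambda_1(A_\beta)\mathbb I$, use $b_T\geq\lambda_1(A_1)$, $\sigma_T\geq\Lambda-\lambda_1(A_\beta)$ and entrywise bounds on $Z$ (with Lemma \ref{lem:max-principle} only in the monotone case), and you obtain the last claim, as the paper does, from the pointwise sandwich $A_1\preceq A_1+\mathbb V_2\preceq(1+\mathcal V_+/\lambda_1(A_1))A_1$ of the denominators in \eqref{eqn:rho-hweighted} rather than by comparing the closed-form bounds.
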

\begin{proof}
The entries of $Z = \mathbb V_2+3\beta\diag((\bu^*)^2)-\lambda_1(A_\beta)\mathbb I$ are at least $-\mathcal V_--\lambda_1(A_\beta)$ and, for a monotone scheme, at most $\mathcal V_++2\lambda_1(A_\beta)$ by Lemma \ref{lem:max-principle}, and $b_T\geq\lambda_1(A_1)$ by \eqref{eqn:bT}, so Theorem \ref{thm:rayleigh-quotient-bounds} with $\sigma = \Lambda-\lambda_1(A_\beta)$ gives the first two claims. For the last claim, $-\Delta_h+\mathbb V = A_1+\mathbb V_2$ and $0\leq\mathbb V_2\leq\mathcal V_+\mathbb I$ give $\ip{A_1\bx,\bx}_h\leq\ip{(A_1+\mathbb V_2)\bx,\bx}_h\leq(1+\mathcal V_+/\lambda_1(A_1))\ip{A_1\bx,\bx}_h$, so by \eqref{eqn:rho-hweighted} the Rayleigh quotients of the two metrics differ pointwise by a factor between $1$ and $1+\mathcal V_+/\lambda_1(A_1)$, and so do their extremes.
\end{proof}

Compared with the bound on $\kappa_{a_0}(\bu^*)$ in Corollary \ref{coro:conditioning}, the bound in Proposition \ref{prop:modified-a0} replaces $\lambda_1(-\Delta_h+\mathbb V)$ by $\lambda_1(A_1)$ and adds the bounded quantities $\mathcal V_\pm$ to the numerators. Thus the modified $a_0$ metric keeps the conditioning of the $a_0$ metric up to a bounded factor. Unlike the modified $H^1$ metric, whose bound grows with the domain size, it is robust with respect to the domain size.

\begin{rem}[Domain robustness of the modified $a_0$ metric]\label{rem:domain-robust}
Let Assumption \ref{asp:monotone} hold, let $V_1$ be confining and $V_2$ be bounded on $\mathbb R^d$, and let the problem be posed on the boxes $\Omega_L = (-L,L)^d$ with a fixed mesh size $h$ and $L\in h\mathbb N$, so that the grids are nested. Then $\lambda_1(A_1)$ is bounded below uniformly in $L$, because the first eigenvalue of $-\Delta_h+\mathbb V_1$ on $\Omega_L$ is at least that on the infinite grid, by the min-max argument of \citet[proof of Theorem~3.5]{liu2026gpu}. By \eqref{eqn:lambda-u}, $\lambda_1(A_\beta)\leq4E(\bu^*)$, which is bounded above uniformly in $L$ by comparison with a fixed grid function supported in the smallest box. Thus, if the coercivity constant $\sigma = \Lambda-\lambda_1(A_\beta)$ is bounded below uniformly in $L$, then $\kappa_{\widetilde a_0}(\bu^*)$ is bounded uniformly in $L$ by Proposition \ref{prop:modified-a0}, since $(\Lambda+\mathcal V_-)/(\Lambda-\lambda_1(A_\beta)) = 1+(\lambda_1(A_\beta)+\mathcal V_-)/\sigma$. For the modified $H^1$ metric, the bound in Corollary \ref{coro:conditioning} grows with $L$, because $\lambda_1(-\Delta_h)\to0$ and $\max_i\mathbb V_i\to\infty$ as $L\to\infty$. In the tests of Section \ref{ssec:domain}, the iteration count of the modified $H^1$ flow grows with $L$ while that of the modified $a_0$ flow does not.
\end{rem}

\subsection{The modified $a_u$ metric}\label{sec:modified-au}
On the tangent space, $g_{\bu^*}(\Hess E(\bu^*)[\eta],\eta) = g_{\bu^*}(\eta,\eta)+\ip{Z\eta,\eta}_h$ by \eqref{eqn:rho-hweighted}, so the closer the metric operator is to the constrained Hessian $H^* = A_{3\beta}-\lambda_1(A_\beta)\mathbb I$, the closer the Riemannian Hessian is to the identity. A metric operator must be positive definite at every $\bu\in\mathbb R^N$. A natural choice is $A_{3\beta}(\bu)$ itself, which is positive definite for every $\bu$ and at $\bu^*$ differs from $H^*$ only by the scalar shift $\lambda_1(A_\beta)\mathbb I$. Including the shift, which amounts to Newton's method, destroys positive definiteness away from the ground state. The operator $A_{3\beta}(\bu)$ thus defines the modified $a_u$ metric \eqref{eqn:six-metrics}. At the ground state, $Z = -\lambda_1(A_\beta)\mathbb I$ is a scalar matrix, so the Riemannian Hessian is explicit.

\begin{prop}[Condition number of the modified $a_u$ metric]\label{prop:modified-au}
Let $\bu^*$ be the ground state and let $s_1\leq\cdots\leq s_{N-1}$ be the eigenvalues of the quadratic form $\ip{H^*\eta,\eta}_h$ on $T_{\bu^*}\mathcal M$, that is, of the $\ip{\cdot,\cdot}_h$-orthogonal compression of $H^*$ to the tangent space, so that $s_1 = \sigma_T$. Under the modified $a_u$ metric, denoted by $\widetilde a_u$ in subscripts, the eigenvalues of $\Hess E(\bu^*)$ are
\begin{equation}\label{eqn:modified-au-spectrum}
\rho_k = \frac{s_k}{s_k+\lambda_1(A_\beta)}, \qquad k = 1,\ldots,N-1,
\end{equation}
and thus, without any assumption on the scheme, $\sigma_T/(\lambda_1(A_\beta)+\sigma_T)\leq\rho_{\bu^*}(\eta)<1$ on $T_{\bu^*}\mathcal M\setminus\{0\}$ and
\[
\kappa_{\widetilde a_u}(\bu^*) = \frac{s_{N-1}}{s_{N-1}+\lambda_1(A_\beta)}\Big(1+\frac{\lambda_1(A_\beta)}{\sigma_T}\Big)\ \leq\ 1+\frac{\lambda_1(A_\beta)}{\sigma_T}.
\]
\end{prop}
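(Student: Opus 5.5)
By \eqref{eqn:rho-hweighted}, the Riemannian Hessian at $\bu^*$ is the pair of quadratic forms $\ip{H^*\eta,\eta}_h$ and $\ip{B^*\eta,\eta}_h$ on $T_{\bu^*}\mathcal M$. For the modified $a_u$ metric, $B^* = A_{3\beta}(\bu^*) = H^* + \lambda_1(A_\beta)\mathbb I$. The key observation is that the shift by a scalar multiple of the identity commutes with the $\ip{\cdot,\cdot}_h$-orthogonal compression to the tangent space. Write $Q$ for the $\ip{\cdot,\cdot}_h$-orthogonal projection onto $T_{\bu^*}\mathcal M$ and $\widehat H := QH^*|_{T}$ for the compression. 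Since $Q\mathbb I|_T$ is the identity on $T$, the compression of $B^*$ is $\widehat H+\lambda_1(A_\beta)\mathbb I_T$.

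The compression $\widehat H$ is self-adjoint on $T$ with respect to $\ip{\cdot,\cdot}_h$, so it has an $\ip{\cdot,\cdot}_h$-orthonormal eigenbasis $\eta_1,\dots,\eta_{N-1}$ with eigenvalues $s_k$. By Courant--Fischer on $T$ together with \eqref{eqn:bT}, $s_1 = \sigma_T$, and Lemma \ref{lem:coercive} gives $s_k\ge\sigma_T>0$. In this basis both forms are diagonal, so the generalized eigenproblem $\ip{H^*\eta,\xi}_h = \rho\,\ip{B^*\eta,\xi}_h$ for all $\xi\in T$ has the solutions $\eta_k$ with $\rho_k = s_k/(s_k+\lambda_1(A_\beta))$. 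By the definition of $\Hess E(\bu^*)$ via $g_{\bu^*}(\Hess E(\bu^*)[\eta],\xi)=\ip{H^*\eta,\xi}_h$, these are exactly its eigenvalues, which gives \eqref{eqn:modified-au-spectrum}. One should also check that $\lambda_1(A_\beta)>0$. This holds because $\lambda(\bu^*) = \lambda_1(A_\beta) = 2E(\bu^*)+\tfrac\beta2\ip{(\bu^*)^2,(\bu^*)^2}_h>0$, since $\mathbb S\succ0$. It is the only place positivity is needed, and no monotonicity assumption enters.

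The map $s\mapsto s/(s+\lambda_1)$ is increasing and strictly less than $1$ for $s>0$. Hence the smallest eigenvalue is $\rho_1=\sigma_T/(\sigma_T+\lambda_1)$, the largest is $\rho_{N-1}<1$, and the Rayleigh quotient lies between them. The condition number is therefore
\[
\frac{\rho_{N-1}}{\rho_1}=\frac{s_{N-1}}{s_{N-1}+\lambda_1}\cdot\frac{\sigma_T+\lambda_1}{\sigma_T},
\]
which is the stated identity. Dropping the factor $s_{N-1}/(s_{N-1}+\lambda_1)<1$ gives the bound $1+\lambda_1/\sigma_T$. The only delicate point, which is mild, is justifying the identification of the Hessian eigenvalues with the generalized eigenvalues of the two compressed forms. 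This follows from the self-adjointness of $\Hess E(\bu^*)$ in $g_{\bu^*}$ and the fact that both forms are simultaneously diagonalized by the basis $\eta_k$.
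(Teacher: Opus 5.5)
Your proof is correct and follows essentially the same route as the paper. Both arguments use the polarized identity $g_{\bu^*}(\Hess E(\bu^*)[\eta],\xi)=\ip{H^*\eta,\xi}_h$ on $T_{\bu^*}\mathcal M$ together with $B^*=H^*+\lambda_1(A_\beta)\mathbb I$, which reduces the Hessian eigenproblem to that of the compression of $H^*$ with $\rho=s/(s+\lambda_1(A_\beta))$; the paper phrases this as an if-and-only-if on eigenpairs, you as a simultaneous diagonalization, and your explicit check $\lambda_1(A_\beta)>0$ is what the paper uses implicitly when it rules out $\rho=1$.
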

\begin{proof}
Since $\Hess E(\bu^*)$ is self-adjoint with respect to $g_{\bu^*}$, polarization of \eqref{eqn:second-variation} \citep[(5.36)]{absil2008optimization} gives $g_{\bu^*}(\Hess E(\bu^*)[\eta],\xi) = \ip{H^*\eta,\xi}_h$ for all $\eta,\xi\in T_{\bu^*}\mathcal M$. Hence $\Hess E(\bu^*)[\eta] = \rho\eta$ if and only if $\ip{H^*\eta,\xi}_h = \rho\ip{B^*\eta,\xi}_h$ for all $\xi\in T_{\bu^*}\mathcal M$. With $B^* = A_{3\beta} = H^*+\lambda_1(A_\beta)\mathbb I$ this reads $(1-\rho)\ip{H^*\eta,\xi}_h = \rho\lambda_1(A_\beta)\ip{\eta,\xi}_h$. Here $\rho\neq1$, since $\rho = 1$ forces $\eta = 0$. Thus $\eta$ is an eigenvector of $\Hess E(\bu^*)$ with eigenvalue $\rho$ if and only if it is an eigenvector of the compression of $H^*$ to $T_{\bu^*}\mathcal M$ with eigenvalue $s = \rho\lambda_1(A_\beta)/(1-\rho)$, that is, $\rho = s/(s+\lambda_1(A_\beta))$.
\end{proof}

Unlike for the $a_u$ metric, $\nabla_XE(\bu) = \bu-2\beta A_{3\beta}(\bu)^{-1}\bu^3$ is not $\bu$, so one iteration requires two solves instead of one. Remark \ref{rem:tf-orders} compares this cost with the gain in conditioning in the Thomas--Fermi regime.

\begin{rem}[Family of metrics between $a_u$ and modified $a_u$]\label{rem:family}
The $a_u$ and modified $a_u$ metrics are the members $c = 1$ and $c = 3$ of the family of $\bu$-dependent metrics induced by
\begin{equation}\label{eqn:family}
B_c(\bu) := -\Delta_h+\mathbb V+c\beta\diag(\bu^2), \qquad c\geq1,
\end{equation}
for which $Z_c = (3-c)\beta\diag((\bu^*)^2)-\lambda_1(A_\beta)\mathbb I$ at the ground state. For $c\leq3$ the entries of $Z_c$ are at least $-\lambda_1(A_\beta)$, so $\zeta_-\leq\lambda_1(A_\beta)$, and under Assumption \ref{asp:monotone} they are at most $(2-c)\lambda_1(A_\beta)$ by Lemma \ref{lem:max-principle}, so $\zeta_+ = 0$ for every $c\in[2,3]$. Thus, under Assumption \ref{asp:monotone}, all members with $c\in[2,3]$ share the bound $\kappa_c(\bu^*)\leq1+\lambda_1(A_\beta)/\sigma_T$ of Proposition \ref{prop:modified-au}. Only $c = 3$ has a scalar $Z_c$ and thus the explicit spectrum \eqref{eqn:modified-au-spectrum}, and for this member the bound holds without any assumption on the scheme (Proposition \ref{prop:modified-au}).
\end{rem}

\subsection{The Thomas--Fermi metric}\label{sec:tf-metric}
We now explain how the new Thomas--Fermi metric is designed. By the Rayleigh quotient \eqref{eqn:rho-hweighted}, the metric operator should approximate the constrained Hessian $H^* = A_{3\beta}-\lambda_1\mathbb I$ on the tangent space, where $\lambda_1:=\lambda_1(A_\beta)$. The operator $A_{3\beta}(\bu)-\lambda(\bu)\mathbb I$ at a general iterate $\bu$ is not positive definite (Section \ref{sec:modified-au}), so $H^*$ cannot be used directly. Inspired by the Thomas--Fermi approximation $\beta(\bu^*)^2\approx(\lambda_1-\mathbb V)_+$, if we write $\mathbb V-\lambda_1\mathbb I = (\mathbb V-\lambda_1)_+-(\lambda_1-\mathbb V)_+$, then
\begin{equation}\label{eqn:H-split}
H^* = \underbrace{-\Delta_h+2\beta\diag((\bu^*)^2)+\diag\big((\mathbb V-\lambda_1)_+\big)}_{\succ\,0}
    +\diag\big(\beta(\bu^*)^2-(\lambda_1-\mathbb V)_+\big),
\end{equation}
where the last term is the error of the Thomas--Fermi approximation. The first term is positive definite, and it remains so at a general iterate $\bu$ with $\lambda(\bu)$ of \eqref{eqn:lambda-u} in place of $\lambda_1$, because each of its three terms is positive semidefinite. Replacing the Thomas--Fermi approximation error by $\ell\lambda(\bu)^{1/3}\,\mathbb I$ gives the Thomas--Fermi metric operator $B_{\mathrm{TF}}(\bu)$ of \eqref{eqn:tf-metric}, where the power $1/3$ is taken according to the analysis in \citet{gallo2011thomas} and Corollary \ref{thm:TF} below. To our knowledge, the metric \eqref{eqn:tf-metric} has not previously been proposed. The Thomas--Fermi preconditioner of \citet{antoine2014robust}, used alone or combined with a Laplace preconditioner in \citet{antoine2017efficient}, is the diagonal operator $\alpha+\mathbb V+\beta\bu^2$ obtained by dropping the kinetic term as in the Thomas--Fermi approximation, whereas the metric here keeps the Laplacian.

At the ground state, $\lambda(\bu^*) = \lambda_1$ and the matrix $Z$ of the metric is the Thomas--Fermi error shifted down by $\ell\lambda_1^{1/3}$,
\begin{equation}\label{eqn:ZTF}
Z_{\mathrm{TF}} = \diag\big(\beta(\bu^*)^2-(\lambda_1-\mathbb V)_+\big)-\ell\lambda_1^{1/3}\,\mathbb I.
\end{equation}
\begin{prop}[Condition number of the Thomas--Fermi metric]\label{prop:tf-metric}
Let $\bu^*$ be the ground state, $\lambda_1=\lambda_1(A_\beta)$ and
\begin{equation}\label{eqn:tf-defect}
e_{\mathrm{TF}}:=\max_i\big|\beta(\bu^*_i)^2-(\lambda_1-\mathbb V_i)_+\big|.
\end{equation}
Under the Thomas--Fermi metric with any $\ell>0$, denoted by $\mathrm{TF}$ in subscripts, the spectrum of $\Hess E(\bu^*)$ lies in $[\sigma_T/(\sigma_T+e_{\mathrm{TF}}+\ell\lambda_1^{1/3}),\,\max\{1,\,e_{\mathrm{TF}}/(\ell\lambda_1^{1/3})\}]$, and thus
\begin{equation}\label{eqn:tf-kappa}
\kappa_{\mathrm{TF}}(\bu^*)\leq\max\Big\{1,\frac{e_{\mathrm{TF}}}{\ell\lambda_1^{1/3}}\Big\}\Big(1+\frac{e_{\mathrm{TF}}+\ell\lambda_1^{1/3}}{\sigma_T}\Big).
\end{equation}
In particular, if $e_{\mathrm{TF}}\leq C_{\mathrm{TF}}\lambda_1^{1/3}$ and $\sigma_T\geq c_\sigma\lambda_1^{1/3}$ with constants independent of $\beta$, then
\begin{equation}\label{eqn:tf-uniform}
\kappa_{\mathrm{TF}}(\bu^*)\leq\max\Big\{1,\frac{C_{\mathrm{TF}}}{\ell}\Big\}\Big(1+\frac{C_{\mathrm{TF}}+\ell}{c_\sigma}\Big)
\end{equation}
uniformly in $\beta$, for every $\ell>0$.
\end{prop}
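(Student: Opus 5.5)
The plan is to apply the general quadratic-form comparison behind Theorem \ref{thm:rayleigh-quotient-bounds}, using the specific diagonal defect $Z_{\mathrm{TF}}$ of \eqref{eqn:ZTF}. By \eqref{eqn:rho-hweighted}, which holds for the Thomas--Fermi metric even at nodes where $\mathbb V_i=\lambda_1$ (by the definition of $\Hess E(\bu^*)$ given after Lemma \ref{lem:second-variation}), the spectrum of $\Hess E(\bu^*)$ is the set of stationary values of $\ip{H^*\eta,\eta}_h/\ip{B^*\eta,\eta}_h$ on $T_{\bu^*}\mathcal M$. So it suffices to bound this quotient. Write $\epsilon:=\ell\lambda_1^{1/3}$, and note that $\lambda(\bu^*)=\lambda_1$, so $B^* = B_{\mathrm{TF}}(\bu^*)$.

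First I read off the entries of $Z_{\mathrm{TF}}$. By \eqref{eqn:tf-defect} they lie in $[-e_{\mathrm{TF}}-\epsilon,\ e_{\mathrm{TF}}-\epsilon]$. Hence $\zeta_-\le e_{\mathrm{TF}}+\epsilon$ and $\zeta_+\le(e_{\mathrm{TF}}-\epsilon)_+$, with no use of Assumption \ref{asp:monotone}. For the lower bound I use $B^*\preceq H^*+(e_{\mathrm{TF}}+\epsilon)\mathbb I$ and $\ip{\eta,\eta}_h\le\ip{H^*\eta,\eta}_h/\sigma_T$ on the tangent space, which gives $\rho\ge\sigma_T/(\sigma_T+e_{\mathrm{TF}}+\epsilon)$, exactly as in Theorem \ref{thm:rayleigh-quotient-bounds}.

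The upper bound needs a slightly sharper argument than Theorem \ref{thm:rayleigh-quotient-bounds} gives directly. That theorem, with $b_T\ge\epsilon$ (true because $B^*-\epsilon\mathbb I = -\Delta_h+\text{nonnegative diagonal}\succeq0$), only yields $1+(e_{\mathrm{TF}}-\epsilon)_+/\epsilon = \max\{1,e_{\mathrm{TF}}/\epsilon\}$. Conveniently, this is already the claimed form, since $1+(e-\epsilon)_+/\epsilon=\max\{1,e/\epsilon\}$. So I will simply verify that identity. Multiplying the two extreme bounds of the spectrum then gives \eqref{eqn:tf-kappa}.

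Finally, for \eqref{eqn:tf-uniform} I substitute $e_{\mathrm{TF}}\le C_{\mathrm{TF}}\lambda_1^{1/3}$ and $\sigma_T\ge c_\sigma\lambda_1^{1/3}$ into \eqref{eqn:tf-kappa}. Both factors are monotone increasing in $e_{\mathrm{TF}}$ and the second is decreasing in $\sigma_T$. After substitution the powers $\lambda_1^{1/3}$ cancel, leaving constants independent of $\beta$. There is no real obstacle. The only care needed is that \eqref{eqn:rho-hweighted} still applies in the nonsmooth case and that $b_T\ge\epsilon$ holds, both of which are immediate from the definitions above.
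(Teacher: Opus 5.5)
Your proposal is correct and matches the paper's proof: you bound the entries of $Z_{\mathrm{TF}}$ to get $\zeta_-\leq e_{\mathrm{TF}}+\ell\lambda_1^{1/3}$ and $\zeta_+\leq(e_{\mathrm{TF}}-\ell\lambda_1^{1/3})_+$, and you obtain $b_T\geq\ell\lambda_1^{1/3}$ from the positive semidefiniteness of $B_{\mathrm{TF}}(\bu^*)-\ell\lambda_1^{1/3}\mathbb I$. You then apply Theorem \ref{thm:rayleigh-quotient-bounds} with the identity $1+(e-\epsilon)_+/\epsilon=\max\{1,e/\epsilon\}$ and insert the hypotheses; as you noticed yourself, the upper bound needs nothing sharper than that theorem.
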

\begin{proof}
Every diagonal entry of \eqref{eqn:ZTF} lies in $[-e_{\mathrm{TF}}-\ell\lambda_1^{1/3},\,e_{\mathrm{TF}}-\ell\lambda_1^{1/3}]$, so $\zeta_-\leq e_{\mathrm{TF}}+\ell\lambda_1^{1/3}$ and $\zeta_+\leq(e_{\mathrm{TF}}-\ell\lambda_1^{1/3})_+$, and $b_T\geq\lambda_1(B^*)\geq\ell\lambda_1^{1/3}$ by \eqref{eqn:bT}, because $B_{\mathrm{TF}}(\bu^*)-\ell\lambda_1^{1/3}\mathbb I$ is positive semidefinite. Theorem \ref{thm:rayleigh-quotient-bounds} gives the spectrum, since $1+\zeta_+/b_T\leq1+(e_{\mathrm{TF}}/(\ell\lambda_1^{1/3})-1)_+ = \max\{1,e_{\mathrm{TF}}/(\ell\lambda_1^{1/3})\}$, and the last claim follows by inserting the stated hypotheses.
\end{proof}

For strong interaction, the following result, which is a simple corollary of the results in \citet{gallo2011thomas}, gives the sharp deterioration of the $a_u$ metric for the one-dimensional harmonic trap.

\begin{coro}[Thomas--Fermi asymptotics of the coercivity constant]\label{thm:TF}
For the continuous problem on $\Omega = \mathbb R$ with $V(x) = x^2$, let $u$ be the positive minimizer of $\mathcal E$ on $\{u\in H^1(\mathbb R): \int_{\mathbb R}x^2u^2\,\ud x<\infty,\ \norm{u}_{L^2(\mathbb R)} = 1\}$, let $A_{c\beta} := -\partial_x^2+x^2+c\beta u^2$ on $L^2(\mathbb R)$, $T_u := \{\eta\in H^1(\mathbb R): x\eta\in L^2(\mathbb R),\ (\eta,u)=0\}$, and let $\lambda_1 = \lambda_1(A_\beta)$ be the eigenvalue of $u$, which tends to infinity as $\beta\to\infty$. Then, as $\beta\to\infty$,
\begin{equation}\label{eqn:TF-sigma}
\lambda_1(A_{3\beta}) - \lambda_1(A_\beta) = \mu_1\,\lambda_1^{1/3}\,\big(1+o(1)\big), \qquad \mu_1 = 2.4105\ldots,
\end{equation}
where $\mu_1$ is the smallest eigenvalue of the Schr\"odinger operator $M_0 = -4\partial_y^2 + 3\nu_0(y)^2 - y$ on $L^2(\mathbb R)$ and $\nu_0$ is the Hastings--McLeod solution of the Painlev\'e-II equation $4\nu''+y\nu-\nu^3 = 0$, characterized by $\nu_0(y)\sim\sqrt y$ as $y\to+\infty$ and $\nu_0(y)\to0$ as $y\to-\infty$. Thus, for the $a_u$ metric,
\begin{equation}\label{eqn:TF-scaling}
\min_{T_u}\rho_u = \mu_1\,\lambda_1^{-2/3}\big(1+o(1)\big), \qquad \max_{T_u}\rho_u\to2, \qquad \kappa_{a_u}(u) = \frac{2}{\mu_1}\,\lambda_1^{2/3}\big(1+o(1)\big),
\end{equation}
and the lower bound $1-\lambda_1(A_\beta)/\Lambda$ of Corollary \ref{coro:conditioning} is asymptotically exact.
\end{coro}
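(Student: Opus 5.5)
The plan has two parts. First, obtain \eqref{eqn:TF-sigma} from the Thomas--Fermi analysis of \citet{gallo2011thomas}. Second, turn it into the statements \eqref{eqn:TF-scaling} about the $a_u$ Rayleigh quotient, using the continuous analogues of Lemma \ref{lem:coercive} and Theorem \ref{thm:rayleigh-quotient-bounds}.

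For \eqref{eqn:TF-sigma}, take a small parameter $\varepsilon$ with $\lambda_1\sim\varepsilon^{-2}$, up to the normalization used by \citet{gallo2011thomas}. Rescale so that the Thomas--Fermi profile is $\sqrt{(\lambda_1-x^2)_+/\beta}$ with its edge at $x=\pm\sqrt{\lambda_1}$. The results there give two facts. Near the edge, $\beta u^2$ is described by the Hastings--McLeod profile $\nu_0$ in the boundary-layer variable $y$, with $x-\sqrt{\lambda_1}\sim -y\,\lambda_1^{-1/6}$, and the energy scale of the layer is $\lambda_1^{1/3}$. In the bulk, $A_{3\beta}-\lambda_1 = -\partial_x^2 + x^2-\lambda_1+3\beta u^2 \approx -\partial_x^2 + 2(\lambda_1-x^2)$, which is of order $\lambda_1$ away from the edges.

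The lowest eigenvalue of $A_{3\beta}-\lambda_1$ is therefore attained by functions localized in the boundary layer. There the operator, after scaling by $\lambda_1^{1/3}$, converges in the strong resolvent sense to $M_0=-4\partial_y^2+3\nu_0^2-y$. I expect \citet{gallo2011thomas} to contain exactly this spectral statement for the linearized operator. If it does not, I would prove the two inequalities separately. For the upper bound, use a trial function given by the $M_0$ ground state placed at one edge. For the lower bound, use an IMS localization that splits into layer and bulk regions; the bulk is coercive at order $\lambda_1\gg\lambda_1^{1/3}$, and the localization errors are $o(\lambda_1^{1/3})$. This rigorous localization is the main obstacle.

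Next, compare $\Lambda$ with $\lambda_2(A_\beta)$. In the harmonic trap $\lambda_2(A_\beta)-\lambda_1$ is of order one in the TF regime, since the Kohn/breathing-type low modes have frequencies $O(1)$. It is therefore larger than $\mu_1\lambda_1^{1/3}$? No: order one is smaller than $\lambda_1^{1/3}$. Hence $\Lambda=\lambda_1(A_{3\beta})$ for large $\beta$, using that $\lambda_2(A_\beta)-\lambda_1=O(1)$ or in any case $o(\lambda_1^{1/3})$, and so $\sigma=\lambda_1(A_{3\beta})-\lambda_1$. By Theorem \ref{thm:rayleigh-quotient-bounds} in its continuous form, $\min\rho_u\ge \sigma/(\sigma+\lambda_1)=\mu_1\lambda_1^{-2/3}(1+o(1))$.

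For the matching upper bound on $\min\rho_u$, take the $A_{3\beta}$ ground state $v$ and project it onto $T_u$ to get $\eta$. Then
\[
\rho_u(\eta)=\frac{\ip{(A_{3\beta}-\lambda_1)\eta,\eta}}{\ip{A_\beta\eta,\eta}}.
\]
Because $v$ is concentrated at the edges where $u$ is small, $(v,u)\to0$, and $\ip{A_\beta v,v}=\lambda_1(A_{3\beta})-2\beta(u^2v,v)=\lambda_1(1+o(1))$. The numerator is $\sigma(1+o(1))$, which gives asymptotic exactness.

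For $\max\rho_u\to2$, the upper bound $1+\lambda_1/\lambda_2\le2$ comes from Corollary \ref{coro:conditioning} in its continuous form: the maximum principle gives $\beta u^2\le\lambda_1$. For the lower bound, use a highly localized oscillating test function near $x=0$, where $\beta u^2\approx\lambda_1$. Its frequency should be chosen with $k^2$ comparable to $\lambda_1$ but the bump width $\gg k^{-1}$, say $k=\lambda_1^{1/2}\delta$ with $\delta\to0$. Then the quotient tends to $(k^2+2\lambda_1)/(k^2+\lambda_1)\to2$ after orthogonalizing against $u$. Finally, $\kappa_{a_u}=\max\rho/\min\rho=\frac{2}{\mu_1}\lambda_1^{2/3}(1+o(1))$.
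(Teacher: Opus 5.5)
\paragraph{The gap: the upper bound for $\min_{T_u}\rho_u$.} Put $H := A_{3\beta}-\lambda_1$, let $v$ be the normalized ground state of $H$ with eigenvalue $\delta_1 = \lambda_1(A_{3\beta})-\lambda_1$, and set $c := (v,u)$ and $\eta := v-cu$. Since $Hu = 2\beta u^3$,
\[
\ip{H\eta,\eta} = \delta_1(1-2c^2) + 2c^2\beta\int_{\mathbb R}u^4\,\ud x, \qquad \norm{\eta}^2 = 1-c^2 .
\]
The denominator cannot compensate, because $\ip{A_\beta\eta,\eta} = \lambda_1(1-c^2)+\delta_1-2\beta(u^2v,v) = \lambda_1\norm{\eta}^2+O(\delta_1)$. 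In the Thomas--Fermi regime $\beta\int u^4$ is of order $\lambda_1$ (about $\tfrac45\lambda_1$ to leading order). So the correction is negligible against $\delta_1\sim\mu_1\lambda_1^{1/3}$ only if $c = o(\lambda_1^{-1/3})$. Your statement $c\to0$ is not enough. Such a rate needs quantitative, Agmon-type decay of $v$ from the layers into the bulk, where $u\sim\lambda_1^{-1/4}$ is much larger than in the layers. Nothing in your sketch supplies this, and it does not follow from eigenvalue asymptotics alone.

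\paragraph{The missing idea.} \citet[Theorem~2]{gallo2011thomas} gives the \emph{first two} eigenvalues $\delta_1\leq\delta_2$ of $H$, one per boundary layer, and both equal $\mu_1\lambda_1^{1/3}(1+o(1))$. Since $H$ is even, its second eigenfunction $v_2$ is odd and hence orthogonal to the even $u$. Thus $v_2\in T_u$ without any projection, and $A_\beta\succeq\lambda_1$ gives $\min_{T_u}\rho_u\leq\ip{Hv_2,v_2}/(\lambda_1\norm{v_2}^2) = \delta_2/\lambda_1$. Without using parity, $T_u$ meets $\mathrm{span}\{v_1,v_2\}$ nontrivially, which gives the same bound. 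No localization estimate is needed, and your IMS fallback is unnecessary.

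\paragraph{Smaller points.} Your identification $\Lambda = \lambda_1(A_{3\beta})$ rests on a false claim. The $O(1)$ Kohn and breathing frequencies belong to the Bogoliubov linearization, not to the linear operator $A_\beta$. The gap $\lambda_2(A_\beta)-\lambda_1$ of $A_\beta$ closes, like $\lambda_1^{-1}$ in the paper's computations. You do not need $\Lambda$ at all. Since $\sigma\geq\delta_1$ always,
\[
\frac{\delta_1}{\delta_1+\lambda_1}\leq\frac{\sigma}{\sigma+\lambda_1} = 1-\frac{\lambda_1}{\Lambda}\leq\min_{T_u}\rho_u\leq\frac{\delta_2}{\lambda_1},
\]
and both ends are $\mu_1\lambda_1^{-2/3}(1+o(1))$. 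This gives the lower bound and the asymptotic exactness together. If you want $\sigma$ itself, $\lambda_2(A_\beta)\leq\lambda_2(A_{3\beta}) = \lambda_1+\delta_2$ by min--max suffices.

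For $\max_{T_u}\rho_u\to2$ your construction works but does more than needed. A fixed odd bump near the origin lies in $T_u$, and $\beta u^2 = \lambda_1-x^2+o(\lambda_1)$ uniformly there \citep[Theorem~1]{gallo2011thomas}. This gives the quotient $(2\lambda_1+o(\lambda_1))/(\lambda_1+o(\lambda_1))\to2$, with no frequency tuning or orthogonalization.

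Finally, the rescaling $x=\sqrt{\lambda_1}\,y$ gives $\varepsilon=\lambda_1^{-1}$, not $\lambda_1\sim\varepsilon^{-2}$.
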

\begin{proof}
Equation \eqref{eqn:TF-sigma} is \citet[Theorem~2]{gallo2011thomas} in our variables. Set $x=\sqrt{\lambda_1}y$ and $u(x)=\sqrt{\lambda_1/\beta}\,U(y)$. The ground state equation becomes $\varepsilon^2U''+(1-y^2)U-U^3=0$ with $\varepsilon=\lambda_1^{-1}$, and the continuous constrained Hessian $H := A_{3\beta}-\lambda_1$ satisfies $\lambda_1^{-1}H=-\varepsilon^2\partial_y^2+3U^2-1+y^2=L_+^\varepsilon$ in the notation of \citet{gallo2011thomas}. By \citet[Theorem~2]{gallo2011thomas}, the first two eigenvalues $\delta_1\leq\delta_2$ of $H$ are
\[
\delta_j=\mu_1\lambda_1^{1/3}(1+o(1)),\qquad j=1,2.
\]
These two eigenvalues correspond to the two boundary layers, and $\mu_1>0$ by \citet[Lemma~2.2]{gallo2011thomas}. Since $\delta_1=\lambda_1(A_{3\beta})-\lambda_1$, this proves \eqref{eqn:TF-sigma}.

As quadratic forms, $\lambda_1\mathbb I\preceq A_\beta\preceq H+\lambda_1\mathbb I$, so $\ip{H\eta,\eta}/\ip{A_\beta\eta,\eta}\geq\ip{H\eta,\eta}/(\ip{H\eta,\eta}+\lambda_1\norm\eta^2)\geq\delta_1/(\delta_1+\lambda_1)$ on $T_u$, because $t\mapsto t/(t+\lambda_1)$ is increasing and $\ip{H\eta,\eta}\geq\delta_1\norm\eta^2$. The second eigenfunction $v_2$ of the even Sturm--Liouville operator $H$ is odd, hence orthogonal to $u$, and it gives the upper bound in
\[
\frac{\delta_1}{\delta_1+\lambda_1}
\leq\min_{T_u}\frac{\ip{H\eta,\eta}}{\ip{A_\beta\eta,\eta}}
\leq\frac{\ip{Hv_2,v_2}}{\lambda_1\norm{v_2}^2}
=\frac{\delta_2}{\lambda_1}.
\]
Both endpoints are $\mu_1\lambda_1^{-2/3}(1+o(1))$. The bound $\sigma/(\sigma+\lambda_1)=1-\lambda_1/\Lambda$ lies between the left endpoint and the minimum, since $\sigma\geq\delta_1$, so it too is asymptotically exact. Finally, $\rho_u(\eta) = 1+\ip{(2\beta u^2-\lambda_1)\eta,\eta}/\ip{A_\beta\eta,\eta}\leq2$, because $\beta u^2\leq\lambda_1$ by the maximum principle at the maximum of $u$ and $A_\beta\succeq\lambda_1\mathbb I$. A fixed odd bump $\varphi$ supported near the origin gives $\rho_u(\varphi)\to2$, because $\beta u^2=\lambda_1-x^2+o(\lambda_1)$ uniformly there by \citet[Theorem~1]{gallo2011thomas}. This proves \eqref{eqn:TF-scaling}.
\end{proof}

\begin{rem}[Condition numbers in the Thomas--Fermi regime]\label{rem:tf-orders}
For the continuous one-dimensional harmonic problem of Corollary \ref{thm:TF}, the condition numbers behave as follows as $\beta\to\infty$, with $\lambda_1 = \lambda_1(A_\beta)\to\infty$.
\begin{itemize}
\item \emph{The $a_u$ metric.} By \eqref{eqn:TF-scaling}, $\kappa_{a_u}(u) = (2/\mu_1)\lambda_1^{2/3}(1+o(1))$ grows like $\lambda_1^{2/3}$.
\item \emph{The family \eqref{eqn:family}.} Since $A_\beta\preceq B_c\preceq cA_\beta$ for $c\geq1$, \eqref{eqn:rho-hweighted} gives $\kappa_c(u)\geq\kappa_{a_u}(u)/c$, so every member grows at least like $\lambda_1^{2/3}$. Its member $c = 3$, the modified $a_u$ metric, has $\kappa_{\widetilde a_u}(u) = 1+\lambda_1/\sigma_T\sim\mu_1^{-1}\lambda_1^{2/3}$ by Proposition \ref{prop:modified-au} and \eqref{eqn:TF-sigma}, since the Rayleigh quotient $s/(s+\lambda_1)$ in \eqref{eqn:modified-au-spectrum} has supremum $1$ over the unbounded spectrum of $H$ on $T_u$, approached by high modes and not attained. This is half of $\kappa_{a_u}(u)$, but each iteration needs two solves instead of one (Section \ref{sec:modified-au}).
\item \emph{The $a_0$ metric.} Corollary \ref{coro:conditioning} with $\lambda_1(-\partial_x^2+x^2) = 1$ and $\sigma\geq\lambda_1(A_{3\beta})-\lambda_1$ gives $\kappa_{a_0}(u)\leq(1+2\lambda_1)\big(1+\lambda_1/(\lambda_1(A_{3\beta})-\lambda_1)\big) = (2/\mu_1)\lambda_1^{5/3}(1+o(1))$, so the bound grows like $\lambda_1^{5/3}$.
\item \emph{The Thomas--Fermi metric.} Its condition number is bounded uniformly in $\beta$ for every $\ell>0$, because both hypotheses of \eqref{eqn:tf-uniform} hold for large $\beta$. The two eigenvalues in the proof of Corollary \ref{thm:TF} enclose $\sigma_T$, so $\sigma_T = \mu_1\lambda_1^{1/3}(1+o(1))$ and any $c_\sigma<\mu_1$ works. The uniform approximation of $\beta u^2$ by $\lambda_1^{1/3}\nu_0(y)^2$ in the layer variable $y = (\lambda_1-x^2)/\lambda_1^{1/3}$ \citep[Theorem~1 and (3.17)]{gallo2011thomas} gives $e_{\mathrm{TF}} = (C_0+o(1))\lambda_1^{1/3}$ with $C_0 = \sup_y|\nu_0(y)^2-y_+|$, which is finite by \citet[Proposition~2.1]{gallo2011thomas}, so any $C_{\mathrm{TF}}>C_0$ works. Numerically $C_0 = \nu_0(0)^2\approx0.43$, so $C_{\mathrm{TF}}$ can be taken in $(C_0,1/2)$, and for $\ell = 1/2$ the first factor in \eqref{eqn:tf-uniform} equals one. Unlike the $a_u$ metric and its family, the Thomas--Fermi metric is not uniformly equivalent to $A_\beta$, since on the modes that attain $\sigma_T$ its quadratic form is of order $\lambda_1^{1/3}$ while that of $A_\beta$ is of order $\lambda_1$.
\end{itemize}
\end{rem}

Appendix \ref{app:spectra} tests these orders, in particular the uniform bound for the Thomas--Fermi metric, against exact spectra of the Riemannian Hessian in one and three dimensions.

\section{Convergence}\label{sec:convergence}
The results of this section are standard in form. A local linear rate determined by the spectrum of the Riemannian Hessian and global convergence to a critical point are classical for gradient descent on manifolds \citep{absil2008optimization,boumal2023intromanifolds,schneider2015convergence}. The point of this section is a uniform treatment of all the Sobolev gradient flows of Table \ref{tab:metric-list} under the same hypotheses. For instance, it is not clear whether convergence results for one specific flow, such as those for the discrete modified $H^1$ flow in \citet{chen2024fully}, extend to the Thomas--Fermi metric, whose operator is continuous but not smooth in $\bu$. Throughout this section we fix one of the six metrics, write $\norm{\eta}_\bu := g_\bu(\eta,\eta)^{1/2}$ for the metric norm, and use the following uniform bounds.

\begin{lem}[Uniform bounds on the metric and the energy]\label{lem:metric-equiv}
For a fixed discretization and fixed parameters, there are constants $0<c_1\leq c_2<\infty$ such that $c_1\norm{\bx}_h^2\leq\norm{\bx}_\bu^2\leq c_2\norm{\bx}_h^2$ for all $\bu\in\mathcal M$ and $\bx\in\mathbb R^N$. Thus $K := \{(\bu,\eta): \bu\in\mathcal M,\ \eta\in T_\bu\mathcal M,\ \norm\eta_\bu = 1\}$ is compact, and $C_g := \sup_{\bu\in\mathcal M}\norm{\grad E(\bu)}_\bu$ and $C_E := \max\{1,\ \sup\{|\frac{d^2}{dt^2}E(R_\bu(t\eta))|: (\bu,\eta)\in K,\ |t|\leq C_g\}\}$ are finite.
\end{lem}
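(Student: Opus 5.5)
The plan is to prove the norm equivalence by continuity and compactness, and then obtain the remaining claims as consequences. Each of the six metric operators has the form $B(\bu) = -\Delta_h+\mathbb D(\bu)$ with a nonnegative diagonal $\mathbb D(\bu)$ that depends continuously on $\bu$. For the $a_u$, modified $a_u$ and Thomas--Fermi metrics, $\mathbb D(\bu)$ is built from $\bu^2$, from $\lambda(\bu)$ of \eqref{eqn:lambda-u}, which is a polynomial in $\bu$, and from the continuous maps $t\mapsto t_+$ and $t\mapsto t^{1/3}$ on $[0,\infty)$. Hence $\bu\mapsto\mathbb MB(\bu) = \mathbb S+\mathbb M\mathbb D(\bu)$ is a continuous map into symmetric positive definite matrices. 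For the lower bound, note that $\mathbb S$ is symmetric positive definite (Assumption \ref{asp:monotone}, or directly from $H^1_0$ coercivity of the discrete stiffness form) and $\mathbb M\mathbb D\succeq0$. This gives $\norm{\bx}_\bu^2\geq\bx^T\mathbb S\bx\geq\lambda_1(-\Delta_h)\norm{\bx}_h^2$, so $c_1 := \lambda_1(-\Delta_h)>0$, uniformly even in $\bu\in\mathbb R^N$. For the upper bound, the largest eigenvalue of the pencil $(\mathbb MB(\bu),\mathbb M)$ is continuous in $\bu$, and $\mathcal M$ is compact, being a sphere in a finite-dimensional space. Taking $c_2$ as its maximum over $\mathcal M$ gives the claim. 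This continuity check is the only place where the nonsmooth Thomas--Fermi metric needs attention, and continuity is all that is required.

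Next, compactness of $K$. The set $K$ is closed, since it is cut out by the continuous equations $\ip{\bu,\bu}_h=1$, $\ip{\eta,\bu}_h=0$ and $\eta^T\mathbb MB(\bu)\eta=1$. It is bounded, since $\norm\eta_h^2\leq c_1^{-1}\norm\eta_\bu^2 = c_1^{-1}$. Therefore $K$ is compact in $\mathbb R^{2N}$.

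For $C_g$, formula \eqref{eqn:projection} gives $\grad E(\bu) = B(\bu)^{-1}(A_\beta(\bu)\bu-\lambda_B(\bu)\bu)$. Matrix inversion is continuous on positive definite matrices, and the denominator $\ip{\bu,B(\bu)^{-1}\bu}_h\geq c_2^{-1}>0$ on $\mathcal M$. Hence $\bu\mapsto\norm{\grad E(\bu)}_\bu$ is continuous on the compact set $\mathcal M$, so $C_g<\infty$.

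For $C_E$, the map $(\bu,\eta,t)\mapsto E(R_\bu(t\eta)) = E\big((\bu+t\eta)/\sqrt{1+t^2\norm\eta_h^2}\big)$ is smooth in $t$ for each fixed $(\bu,\eta)$, because $E$ is a polynomial and $R$ is an explicit rational-square-root map. Its second $t$-derivative is a continuous function of $(\bu,\eta,t)$; note that no derivative with respect to $\bu$ is needed, so the metric's nonsmoothness does not enter. A continuous function on the compact set $K\times[-C_g,C_g]$ is bounded, and so $C_E<\infty$. The main obstacle is only bookkeeping: the constants depend on $h$, $L$, $\beta$ and $\ell$, which is permitted here since the discretization and parameters are fixed.
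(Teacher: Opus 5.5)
Your proof is correct and follows essentially the same route as the paper. Both arguments get the norm equivalence from continuity of the metric on a compact set (for the Thomas--Fermi metric, via continuity of $\lambda(\bu)$), and both use closedness plus boundedness for $K$, continuity of $\grad E$ for $C_g$, and continuity of the second $t$-derivative of $E(R_\bu(t\eta))$ on $K\times[-C_g,C_g]$ for $C_E$. The only minor difference is that you obtain $c_1=\lambda_1(-\Delta_h)$ explicitly from $\mathbb D(\bu)\succeq0$, uniformly over all $\bu\in\mathbb R^N$, whereas the paper takes the minimum of the positive continuous function $g_\bu(\bx,\bx)$ over $\mathcal M\times\{\norm{\bx}_h=1\}$.
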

\begin{proof}
The function $(\bu,\bx)\mapsto g_\bu(\bx,\bx)$ is positive and continuous on the compact set $\mathcal M\times\{\bx:\norm{\bx}_h=1\}$. For the Thomas--Fermi metric, the continuity holds because $\lambda(\bu)$ is continuous. We take $c_1$ and $c_2$ to be the minimum and the maximum of this function. The set $K$ is closed. Since $\norm\eta_h\leq c_1^{-1/2}$ for $(\bu,\eta)\in K$, the set $K$ is also bounded and therefore compact. The inverse $B(\bu)^{-1}$ is positive definite and depends continuously on $\bu$. By \eqref{eqn:projection}, $\grad E$ is therefore continuous on $\mathcal M$. Since $\mathcal M$ is compact and the metric is continuous, $C_g$ is finite.

For $(\bu,\eta)\in K$, we have $R_\bu(t\eta) = (\bu+t\eta)/(1+t^2\norm\eta_h^2)^{1/2}$ by \eqref{eqn:iteration}, and the right-hand side is a smooth function of $(\bu,\eta,t)\in\mathbb R^N\times\mathbb R^N\times\mathbb R$. Since $E$ is a polynomial, the second derivative of $E(R_\bu(t\eta))$ in $t$ is therefore a continuous function of $(\bu,\eta,t)$. Hence $C_E$ is finite, because $K\times[-C_g,C_g]$ is compact. Similar bounds for smooth metrics are given in \citet[Lemma~10.57]{boumal2023intromanifolds}.
\end{proof}

\subsection{Local convergence via the Riemannian Hessian}\label{sec:local}
Let $\bu^*$ be the ground state. As in Section \ref{sec:bounds}, we assume that $\bu^*$ is an eigenvector of the simple smallest eigenvalue $\lambda_1(A_\beta)$ of $A_\beta$, which holds under Assumption \ref{asp:monotone}. For the Thomas--Fermi metric, Theorem \ref{thm:asymptotic-rate} also assumes that no node satisfies $\mathbb V_i = \lambda_1(A_\beta)$, so that the metric is smooth near $\bu^*$ (Appendix \ref{app:riemannian-connection}). Proposition \ref{thm:local-convergence} does not need this assumption. By \eqref{eqn:rho-hweighted} and Lemma \ref{lem:coercive}, $\Hess E(\bu^*)$ is positive definite. We denote the eigenvalues of $\Hess E(\bu^*)$ by $0<\rho_1\leq\cdots\leq\rho_{N-1}$. Section \ref{sec:rayleigh} bounds the condition number $\kappa(\bu^*) = \rho_{N-1}/\rho_1$ for each metric. The local rate of the iteration \eqref{eqn:iteration} with a fixed step is controlled by this spectrum. \citet[Theorem~4.20]{boumal2023intromanifolds} proves local linear convergence for every fixed step $\tau<2/\rho_{N-1}$. Theorem \ref{thm:asymptotic-rate} gives the rate in terms of the spectrum and identifies the optimal step.

\begin{thm}[Local rate of the fixed-step iteration]\label{thm:asymptotic-rate}
For $\tau>0$ let $F_\tau(\bu) := R_\bu(-\tau\grad E(\bu))$, so that \eqref{eqn:iteration} with the constant step $\tau$ reads $\bu_{k+1} = F_\tau(\bu_k)$. Then $F_\tau$ is smooth near $\bu^*$, $F_\tau(\bu^*) = \bu^*$, and
\begin{equation}\label{eqn:DF}
\D F_\tau(\bu^*)[\xi] = \xi-\tau\,\Hess E(\bu^*)[\xi] \qquad\text{for all } \xi\in T_{\bu^*}\mathcal M .
\end{equation}
Thus, if $0<\tau<2/\rho_{N-1}$, then for every $\epsilon>0$ there are a neighborhood $S_\epsilon$ of $\bu^*$ in $\mathcal M$ and a constant $C_\epsilon$ such that for every $\bu_0\in S_\epsilon$
\begin{equation}\label{eqn:ostrowski-rate}
\norm{\bu_k-\bu^*}_h\leq C_\epsilon\,(r_\tau+\epsilon)^k\,\norm{\bu_0-\bu^*}_h, \qquad r_\tau := \max_i|1-\tau\rho_i| = \max\{1-\tau\rho_1,\ \tau\rho_{N-1}-1\}.
\end{equation}
The minimum of $r_\tau$ over $\tau$ is attained at $\tau^* = 2/(\rho_1+\rho_{N-1})$ and equals $(\kappa(\bu^*)-1)/(\kappa(\bu^*)+1)$.
\end{thm}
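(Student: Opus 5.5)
The proof proceeds in three steps: smoothness and the fixed point, the differential formula \eqref{eqn:DF}, and a linearization argument with the optimal step computed at the end.

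\textbf{Smoothness and the fixed point.} Near $\bu^*$ the metric operator $B(\bu)$ is smooth and positive definite. For the Thomas--Fermi metric this is exactly the assumption $\mathbb V_i\neq\lambda_1(A_\beta)$, combined with continuity of $\lambda(\bu)$. Hence $B(\bu)^{-1}$ and, by \eqref{eqn:projection}, $\grad E$ are smooth near $\bu^*$. The retraction $R_\bu(\xi)=(\bu+\xi)/\norm{\bu+\xi}_h$ is smooth wherever $\bu+\xi\neq0$, so $F_\tau$ is smooth near $\bu^*$. Since $\grad E(\bu^*)=0$ and $R_{\bu^*}(0)=\bu^*$, we get $F_\tau(\bu^*)=\bu^*$.

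\textbf{The differential.} Regard $F_\tau$ as a map on $\mathcal M$ and take a tangent $\xi$ with a curve $\bu(t)\subset\mathcal M$, $\bu(0)=\bu^*$, $\bu'(0)=\xi$. Write $G(t):=-\tau\grad E(\bu(t))$, so $G(0)=0$.
\begin{itemize}
\item Because $G(0)=0$, the chain rule for $(\bu,\zeta)\mapsto R_\bu(\zeta)$ at $(\bu^*,0)$ gives
$\frac{d}{dt}F_\tau(\bu(t))|_{t=0}=\D_\bu R(\bu^*,0)[\xi]+\D_\zeta R(\bu^*,0)[G'(0)]$.
\item Since $R_\bu(0)=\bu$, the first term is $\xi$. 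Since $R$ is a retraction, $\D_\zeta R(\bu^*,0)$ is the identity on $T_{\bu^*}\mathcal M$.
\item Explicitly, $\D_\zeta R(\bu^*,0)[\bw]=\bw-\ip{\bu^*,\bw}_h\bu^*$ for ambient $\bw$, which is the identity on tangent vectors. This covers the case where $G'(0)$ is not known to be tangent.
\end{itemize}
It remains to identify $G'(0)=-\tau\,\D(\grad E)(\bu^*)[\xi]$. At a zero of a vector field, the covariant derivative equals the ordinary derivative up to connection terms multiplied by the field. Indeed, in the ambient formula $\nabla_\xi \grad E = P_{\bu}(\D\grad E[\xi]) + (\text{terms linear in } \grad E(\bu))$, those extra terms vanish at $\bu^*$. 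So the tangential part of $G'(0)$ is $-\tau\Hess E(\bu^*)[\xi]$.

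The normal part is handled by differentiating $\ip{\bu(t),\grad E(\bu(t))}_h=0$. This gives $\ip{\bu^*,\D\grad E[\xi]}_h=-\ip{\xi,\grad E(\bu^*)}_h=0$, so $\D\grad E(\bu^*)[\xi]$ is already tangent and the projection in the first bullet acts trivially. The Riemannian connection is available in Appendix \ref{app:riemannian-connection} (Lemma \ref{lem:riemannian-hess}). To avoid it, one can instead verify the identity weakly: show $g_{\bu^*}(\D\grad E[\xi],\eta)=\ip{H^*\xi,\eta}_h$ by differentiating $g_\bu(\grad E(\bu),\eta)=\D E(\bu)[\eta]-\lambda_B(\bu)\ip{\bu,\eta}_h$, and use that the metric derivative term multiplies $\grad E(\bu^*)=0$. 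Combining these gives \eqref{eqn:DF}.

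\textbf{Rate and optimal step.} $\D F_\tau(\bu^*)=I-\tau\Hess E(\bu^*)$ is $g_{\bu^*}$-self-adjoint with eigenvalues $1-\tau\rho_i$, so its spectral radius (equal to its $g_{\bu^*}$-norm) is $r_\tau$. We have $r_\tau<1$ precisely when $0<\tau<2/\rho_{N-1}$. Ostrowski's theorem, applied in a chart of $\mathcal M$ around $\bu^*$ (for instance the inverse retraction $\bw\mapsto\bw/\ip{\bu^*,\bw}_h-\bu^*$ recorded in Section \ref{sec:submanifold}), then gives \eqref{eqn:ostrowski-rate}. The construction is as follows:
\begin{itemize}
\item choose a norm on $T_{\bu^*}\mathcal M$ in which $\D F_\tau(\bu^*)$ has norm below $r_\tau+\epsilon/2$;
\item by continuity, the chart representation of $F_\tau$ is then a contraction with factor $r_\tau+\epsilon$ on a small ball $S_\epsilon$;
\item norm equivalence of the chart with $\norm{\cdot}_h$ supplies $C_\epsilon$.
\end{itemize}
Finally, $\max\{1-\tau\rho_1,\tau\rho_{N-1}-1\}$ is minimized where the two terms are equal, i.e. at $\tau^*=2/(\rho_1+\rho_{N-1})$, with value $(\rho_{N-1}-\rho_1)/(\rho_{N-1}+\rho_1)=(\kappa-1)/(\kappa+1)$.

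The main obstacle is the second step: showing rigorously that the ordinary derivative of $\grad E$ at the critical point coincides with the Riemannian Hessian, and that it is tangent, for $\bu$-dependent metrics.
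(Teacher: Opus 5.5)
Your proposal is correct and follows essentially the same route as the paper. Both differentiate $F_\tau$ along a curve using $\grad E(\bu^*)=0$, show that $\D\grad E(\bu^*)[\xi]$ is tangent by differentiating $\ip{\bu,\grad E(\bu)}_h=0$, identify it with $\Hess E(\bu^*)[\xi]$ via the connection formula of Lemma~\ref{lem:riemannian-connection-ambient}, and then run the contraction argument in the chart $R_{\bu^*}$ with the $g_{\bu^*}$-norm. The differences are minor: the paper restricts to $\epsilon<1-r_\tau$ and makes $C_\epsilon=2\sqrt{c_2/c_1}$ explicit through \eqref{eqn:retraction-distance}, where you appeal to local norm equivalence of the chart, and your optional weak verification by differentiating $g_\bu(\grad E(\bu),\eta)$ is also valid.
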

\begin{proof}
The map $F_\tau$ is smooth near $\bu^*$ because $\grad E$ and the retraction are smooth, and $F_\tau(\bu^*) = R_{\bu^*}(0) = \bu^*$. Let $\bu(s)$ be a smooth curve on $\mathcal M$ with $\bu(0) = \bu^*$ and $\bu'(0) = \xi$, and let $\mathbf p(s) := \grad E(\bu(s))$, viewed as a map into $\mathbb R^N$. Then $\mathbf p(0) = 0$ and $\mathbf p'(0) = \D\grad E(\bu^*)[\xi]$. Since $\mathbf p(s)\in T_{\bu(s)}\mathcal M$, the definition \eqref{eqn:iteration} gives
\[
F_\tau(\bu(s)) = \frac{\bu(s)-\tau\mathbf p(s)}{d(s)}, \qquad d(s) := \sqrt{1+\tau^2\norm{\mathbf p(s)}_h^2},
\]
with $d(0) = 1$ and $d'(0) = 0$ because $\mathbf p(0) = 0$. Differentiating at $s = 0$ gives $\D F_\tau(\bu^*)[\xi] = \xi-\tau\mathbf p'(0) = \xi-\tau\,\D\grad E(\bu^*)[\xi]$. Differentiating $\ip{\mathbf p(s),\bu(s)}_h = 0$ at $s = 0$ gives $\ip{\mathbf p'(0),\bu^*}_h = 0$, so $\D\grad E(\bu^*)[\xi]$ lies in $T_{\bu^*}\mathcal M$. By Lemma \ref{lem:riemannian-connection-ambient}, $\Hess E(\bu^*)[\xi] = \nabla_\xi\grad E = P_{\bu^*}\big(\D\grad E(\bu^*)[\xi]+\Gamma_{\bu^*}(\xi,\grad E(\bu^*))\big)$. Since $\grad E(\bu^*) = 0$ and $\D\grad E(\bu^*)[\xi]$ is tangent, this gives $\Hess E(\bu^*)[\xi] = \D\grad E(\bu^*)[\xi]$, which is \eqref{eqn:DF}. This also follows from \citet[Prop.~5.3]{boumal2023intromanifolds}. Now let $\Phi := R_{\bu^*}$, a smooth bijection from $T_{\bu^*}\mathcal M$ onto the open hemisphere around $\bu^*$ with smooth inverse (Section \ref{sec:submanifold}), and $\tilde F := \Phi^{-1}\circ F_\tau\circ\Phi$ near $0$, so that $\tilde F(0) = 0$ and $\D\tilde F(0) = \D F_\tau(\bu^*)$ because $\D\Phi(0) = \mathbb I$. Since $\Hess E(\bu^*)$ is self-adjoint with respect to $g_{\bu^*}$, the operator $\D\tilde F(0) = \mathbb I-\tau\Hess E(\bu^*)$ is self-adjoint with eigenvalues $1-\tau\rho_i$, and its operator norm in the norm $\norm\cdot_{\bu^*}$ on $T_{\bu^*}\mathcal M$ is $r_\tau$, which is smaller than one exactly when $0<\tau<2/\rho_{N-1}$. Let $0<\epsilon<1-r_\tau$, which suffices because the right side of \eqref{eqn:ostrowski-rate} increases with $\epsilon$. Since $\tilde F$ is smooth, there is a ball $B_\epsilon$ around $0$ in the norm $\norm\cdot_{\bu^*}$, which we take inside $\{\norm\xi_h\leq1\}$, with the operator norm $\norm{\D\tilde F(\xi)}_{\bu^*}\leq r_\tau+\epsilon$ for $\xi\in B_\epsilon$. The mean value inequality then gives $\norm{\tilde F(\xi)}_{\bu^*}\leq(r_\tau+\epsilon)\norm\xi_{\bu^*}$ on $B_\epsilon$, compare \citet[Prop.~3.1]{doi:10.1142/S0218202523500343} and \citet[Theorem~4.19]{boumal2023intromanifolds}. Let $\bu_0$ lie in the neighborhood $S_\epsilon := \Phi(B_\epsilon)$ of $\bu^*$, and let $\xi_0 := \Phi^{-1}(\bu_0)$ and $\xi_{k+1} := \tilde F(\xi_k)$. Then every $\xi_k$ lies in $B_\epsilon$ and $\norm{\xi_k}_{\bu^*}\leq(r_\tau+\epsilon)^k\norm{\xi_0}_{\bu^*}$. By the definition of $\tilde F$,
\[
\Phi(\xi_{k+1}) = F_\tau(\Phi(\xi_k)) = R_{\Phi(\xi_k)}\big(-\tau\grad E(\Phi(\xi_k))\big),
\]
and $\Phi(\xi_0) = \bu_0$, so the points $\Phi(\xi_k)$ are exactly the iterates $\bu_k$ of \eqref{eqn:iteration} with the constant step $\tau$. The fixed chart $\Phi$ only supplies coordinates for the iterates, and the retraction in $F_\tau$ stays based at the current iterate $\bu_k$. By Lemma \ref{lem:metric-equiv}, $\norm{\xi_k}_h\leq c_1^{-1/2}\norm{\xi_k}_{\bu^*}\leq c_1^{-1/2}(r_\tau+\epsilon)^k\norm{\xi_0}_{\bu^*}\leq\sqrt{c_2/c_1}\,(r_\tau+\epsilon)^k\norm{\xi_0}_h$. For $\bu\in\mathcal M$ and $\xi\in T_\bu\mathcal M$, the definition \eqref{eqn:iteration} and $\ip{\bu,\xi}_h = 0$ give
\begin{equation}\label{eqn:retraction-distance}
\norm{R_\bu(\xi)-\bu}_h^2 = 2\big(1-(1+\norm\xi_h^2)^{-1/2}\big) = \norm\xi_h^2\,\frac{2}{\sqrt{1+\norm\xi_h^2}\,\big(1+\sqrt{1+\norm\xi_h^2}\big)} ,
\end{equation}
where the last factor decreases from $1$ at $\xi = 0$ to $2-\sqrt2$ at $\norm\xi_h = 1$. Hence $\norm{R_\bu(\xi)-\bu}_h\leq\norm\xi_h$ for every $\xi\in T_\bu\mathcal M$, and $\norm{R_\bu(\xi)-\bu}_h\geq\norm\xi_h/2$ for $\norm\xi_h\leq1$. With $\bu = \bu^*$, the first bound gives $\norm{\bu_k-\bu^*}_h\leq\norm{\xi_k}_h$, and the second gives $\norm{\xi_0}_h\leq2\norm{\bu_0-\bu^*}_h$ because $\xi_0\in B_\epsilon$. Therefore
\[
\norm{\bu_k-\bu^*}_h\leq\norm{\xi_k}_h\leq\sqrt{c_2/c_1}\,(r_\tau+\epsilon)^k\norm{\xi_0}_h\leq2\sqrt{c_2/c_1}\,(r_\tau+\epsilon)^k\norm{\bu_0-\bu^*}_h ,
\]
which is \eqref{eqn:ostrowski-rate} with $C_\epsilon := 2\sqrt{c_2/c_1}$. Finally, $r_\tau$ is the maximum of the decreasing function $1-\tau\rho_1$ and the increasing function $\tau\rho_{N-1}-1$, so it is minimized where the two are equal, at $\tau^* = 2/(\rho_1+\rho_{N-1})$, and its minimum is $1-\tau^*\rho_1 = (\rho_{N-1}-\rho_1)/(\rho_{N-1}+\rho_1) = (\kappa(\bu^*)-1)/(\kappa(\bu^*)+1)$.
\end{proof}

For the $a_u$ metric, \eqref{eqn:rho-hweighted} reads
\[
\rho_{\bu^*}(\eta) = 1-\frac{\ip{(\lambda_1(A_\beta)\mathbb I-2\beta\diag((\bu^*)^2))\eta,\eta}_h}{\ip{A_\beta\eta,\eta}_h} .
\]
Up to the space discretization, the eigenvalues $1-\rho_i$ of $\D F_1(\bu^*)$ in \eqref{eqn:DF} are thus the eigenvalues of the bilinear form $(v,(\lambda-2\beta|u|^2)w)$ relative to the inner product $(v,w)_{a_u}$ on the $L^2$-orthogonal complement of $u$. This is the weighted eigenvalue problem in \citet{doi:10.1142/S0218202523500343}. The undamped inverse iteration $\bu_{k+1}\propto A_\beta(\bu_k)^{-1}\bu_k$ is the iteration \eqref{eqn:iteration} under the $a_u$ metric with $\tau_k = 1$, that is, $\bu_{k+1} = F_1(\bu_k)$. Under Assumption \ref{asp:monotone}, $\rho_{N-1}<2$ by Corollary \ref{coro:conditioning}, so Theorem \ref{thm:asymptotic-rate} applies with $\tau = 1$, and this iteration converges locally with the rate $r_1+\epsilon$ for every $\epsilon>0$, where $r_1 = \max_i|1-\rho_i|$ is the largest modulus of the eigenvalues of the weighted problem, in agreement with \citet[Remark~3.3]{doi:10.1142/S0218202523500343}. For a fixed step $\tau\in(0,2/\rho_{N-1})$ the eigenvalues of $\D F_\tau(\bu^*)$ are $1-\tau\rho_i = 1-\tau+\tau\mu_i$ with $\mu_i := 1-\rho_i$. Thus the rate $r_\tau = \max_i|1-\tau+\tau\mu_i|$ of Theorem \ref{thm:asymptotic-rate} is the sharper rate noted in \citet[Section~6.1]{doi:10.1142/S0218202523500343} for the damped inverse iteration with damping parameter $\tau$. By Corollary \ref{coro:conditioning}, $r_\tau\leq|1-\tau|+\tau\max_i|\mu_i|\leq|1-\tau|+\tau\lambda_1(A_\beta)/\lambda_2(A_\beta)$, which is the bound of \citet[Theorem~6.2]{doi:10.1142/S0218202523500343}. The critical damping parameter $2(1+\lambda_1(A_\beta)/\lambda_2(A_\beta))^{-1}$ of that bound is at most $2/\rho_{N-1}$, also by Corollary \ref{coro:conditioning}.

The same spectrum also controls the energy on a whole sublevel set. The proof combines sufficient decrease with a Polyak--{\L}ojasiewicz inequality, as in \citet[Lemma~11.28 and Theorem~11.29]{boumal2023intromanifolds} for geodesically strongly convex functions, with geodesic convexity replaced by bounds on the second derivative along retraction curves. \citet[Theorem~4.5.6]{absil2008optimization} prove an asymptotic energy contraction for Armijo steps along $-\grad E$, with a rate that also depends on the ratio of the extreme eigenvalues of the Hessian.

\begin{prop}[Energy contraction on a sublevel set]\label{thm:local-convergence}
Let Assumption \ref{asp:monotone} hold and let $\rho_- := \rho_1$, $\rho_+ := \rho_{N-1}$. There is $\epsilon>0$ such that, with $S_0 := \{\bu\in\mathcal M: E(\bu)\leq E(\bu^*)+\epsilon,\ \ip{\bu,\bu^*}_h>0\}$, the iteration
\[
\bu_{k+1} = R_{\bu_k}\Big(-\frac{1}{2\rho_+}\grad E(\bu_k)\Big)
\]
started from any $\bu_0\in S_0$ stays in $S_0$ and satisfies
\begin{equation}\label{eqn:kappa-rate}
E(\bu_k)-E(\bu^*)\ \leq\ \Big(1-\frac{1}{4\kappa(\bu^*)}\Big)^k\big(E(\bu_0)-E(\bu^*)\big).
\end{equation}
\end{prop}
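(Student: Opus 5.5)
The plan is the standard combination of sufficient decrease and a Polyak--{\L}ojasiewicz inequality, carried out in the chart $\Phi = R_{\bu^*}$ of Section \ref{sec:submanifold}. Everything rests on two local facts near $\bu^*$, each with constants within a factor of the spectral bounds $\rho_\pm$ of $\Hess E(\bu^*)$. The factor $2$ slack in the step $1/(2\rho_+)$ and the factor $4$ in the rate absorb the perturbation from moving away from $\bu^*$.

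\emph{Step 1 (local quadratic bounds).} By Lemma \ref{lem:second-variation} and continuity, $\frac{d^2}{dt^2}E(R_\bu(t\eta))$ at $t=0$ equals $g_\bu(\Hess E(\bu)[\eta],\eta)$ plus a term proportional to $\grad E(\bu)$. This quantity tends to $g_{\bu^*}(\Hess E(\bu^*)[\eta],\eta)$ as $\bu\to\bu^*$, uniformly over the compact set $K$ of Lemma \ref{lem:metric-equiv}. The metric is only Lipschitz for Thomas--Fermi, so I avoid differentiating $B$ by working with $E\circ R_\bu$, which is smooth in $(\bu,\eta,t)$. Continuity in $\bu$ of the norm $\norm{\cdot}_\bu$ suffices. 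Hence there is a neighborhood $U$ of $\bu^*$ and $\delta>0$ such that for $\bu\in U$, $\norm\eta_\bu=1$ and $|t|\le\delta$,
\[
\tfrac12\rho_-\le \tfrac{d^2}{dt^2}E(R_\bu(t\eta))\le \tfrac32\rho_+ .
\]

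\emph{Step 2 (sufficient decrease).} Taylor expansion with $\eta=-\grad E(\bu)/\norm{\grad E(\bu)}_\bu$ and $t=\norm{\grad E}_\bu/(2\rho_+)$ gives
\[
E(\bu_{k+1})\le E(\bu_k)-\tfrac{1}{2\rho_+}\norm{\grad E}^2+\tfrac{3\rho_+}{4}\tfrac{1}{4\rho_+^2}\norm{\grad E}^2\le E(\bu_k)-\tfrac{1}{4\rho_+}\norm{\grad E(\bu_k)}^2_{\bu_k}.
\]
This needs $t\le\delta$, which holds once $\grad E$ is small on $U$.

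\emph{Step 3 (P\L).} Take $\bu\in U$ and let $\gamma$ be the curve joining $\bu$ to $\bu^*$. The convenient choice is $\gamma(t)=R_\bu(t\xi)$ with $\xi=R_\bu^{-1}(\bu^*)$, which is valid because $\ip{\bu,\bu^*}_h>0$. The lower bound in Step 1 along this curve makes $E\circ\gamma$ strongly convex with modulus $\rho_-/2$ in the $\norm\cdot_\bu$-parametrization. The standard one-dimensional argument then gives
\[
E(\bu)-E(\bu^*)\le \tfrac{1}{\rho_-}\norm{\grad E(\bu)}_\bu^2 .
\]
Combining with Step 2 yields $E(\bu_{k+1})-E^*\le(1-\rho_-/(4\rho_+))(E(\bu_k)-E^*)$, which is \eqref{eqn:kappa-rate}.

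\emph{Step 4 (invariance of $S_0$).} The main obstacle is showing that $S_0$ lies in $U$ for small $\epsilon$ and is invariant. First I show $S_0\subset U$. By uniqueness of the ground state up to sign (Lemma \ref{lem:perron}) and compactness of $\mathcal M$, the sublevel set $\{E\le E^*+\epsilon\}$ shrinks to $\{\pm\bu^*\}$ as $\epsilon\to0$, and the condition $\ip{\bu,\bu^*}_h>0$ selects the $+\bu^*$ component. Hence $S_0\subset U$ for small $\epsilon$, with $U$ and $-U$ disjoint. Next I show invariance. Energy decreases by Step 2. The step is small, $\norm{\bu_{k+1}-\bu_k}_h\le c_1^{-1/2}C\norm{\grad E}/\rho_+$ by \eqref{eqn:retraction-distance}, so $\bu_{k+1}$ stays close to $\bu_k$. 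Therefore the sign of $\ip{\bu_{k+1},\bu^*}_h$ cannot flip, since the iterate cannot jump to the neighborhood of $-\bu^*$. This gives $\bu_{k+1}\in S_0$, and induction completes the proof.
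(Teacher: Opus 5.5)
Your proposal is correct and follows essentially the paper's proof. The ingredients are the same: uniform two-sided bounds on $\frac{d^2}{dt^2}E(R_\bu(t\eta))$ near $\bu^*$ from continuity on the compact set $K$, sufficient decrease for the step $1/(2\rho_+)$, a Polyak--{\L}ojasiewicz inequality from the lower bound along $t\mapsto R_\bu(t\xi)$ with $R_\bu(\xi)=\bu^*$, and the splitting of the sublevel set into two small components around $\pm\bu^*$. For the Polyak--{\L}ojasiewicz step you should also shrink the neighborhood so that $\norm{\xi}_\bu\le\delta$, as the paper does.

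The remaining differences are cosmetic. You use $\tfrac32\rho_+$ instead of $2\rho_+$ as the upper curvature bound. You also keep the iterate in the component of $\bu^*$ by the smallness of the step, whereas the paper uses connectedness of the retraction segment from $\bu_k$ to $\bu_{k+1}$.
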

\begin{proof}
\emph{Step 1.} The function $Q(t,\bu,\eta) := \frac{d^2}{dt^2}E(R_\bu(t\eta))$ is continuous on $[-1,1]\times K$, and $Q(0,\bu^*,\eta) = g_{\bu^*}(\Hess E(\bu^*)[\eta],\eta)\in[\rho_-,\rho_+]$ for $(\bu^*,\eta)\in K$ by \eqref{eqn:second-variation}. There are $r\in(0,1]$ and a neighborhood $\mathcal N$ of $\bu^*$ with $\rho_-/2\leq Q\leq2\rho_+$ for $|t|\leq r$, $\bu\in\mathcal N$, $(\bu,\eta)\in K$. Otherwise there would be $t_j\to0$ and $(\bu_j,\eta_j)\in K$ with $\bu_j\to\bu^*$ and $Q(t_j,\bu_j,\eta_j)\notin(\rho_-/2,2\rho_+)$. Since $K$ is compact, a subsequence of $(\bu_j,\eta_j)$ converges to some $(\bu^*,\eta^*)\in K$, and continuity of $Q$ gives $Q(0,\bu^*,\eta^*)\notin(\rho_-/2,2\rho_+)$, which contradicts $Q(0,\bu^*,\eta^*)\in[\rho_-,\rho_+]$. Now let $\bu\in\mathcal N$ and $\bw\in T_\bu\mathcal M$ with $0<\norm\bw_\bu\leq r$, and set $\eta := \bw/\norm\bw_\bu$. Since $\frac{d}{dt}E(R_\bu(t\eta))|_{t=0} = \D E(\bu)[\eta] = g_\bu(\grad E(\bu),\eta)$ by \eqref{eqn:iteration} and \eqref{eqn:projection}, Taylor's theorem for $t\mapsto E(R_\bu(t\eta))$ at $t = \norm\bw_\bu$ gives
\begin{equation}\label{eqn:taylor}
\tfrac{\rho_-}{4}\norm\bw_\bu^2\ \leq\ E(R_\bu(\bw))-E(\bu)-g_\bu(\grad E(\bu),\bw)\ \leq\ \rho_+\norm\bw_\bu^2 .
\end{equation}
\emph{Step 2.} Since $E(-\bu) = E(\bu)$ and $\pm\bu^*$ are the only global minimizers (Lemma \ref{lem:perron}), compactness of $\mathcal M$ shows that for small $\epsilon$ the sublevel set $\{E\leq E(\bu^*)+\epsilon\}$ lies in two disjoint balls of any prescribed radius around $\pm\bu^*$, and $S_0$ is its part in the ball around $\bu^*$. For $\bu\in S_0$ let $\xi_\bu := \bu^*/\ip{\bu,\bu^*}_h-\bu\in T_\bu\mathcal M$ be the tangent vector with $R_\bu(\xi_\bu) = \bu^*$ (Section \ref{sec:submanifold}). Since $\norm{\xi_\bu}_h^2 = \ip{\bu,\bu^*}_h^{-2}-1$, it satisfies $\norm{\xi_\bu}_\bu^2\leq c_2(\ip{\bu,\bu^*}_h^{-2}-1)$. We take $\epsilon$ so small that $S_0\subset\mathcal N$ and that $\norm{\grad E(\bu)}_\bu/(2\rho_+)<r$ and $\norm{\xi_\bu}_\bu<r$ on $S_0$, which is possible because both quantities are continuous and vanish at $\bu^*$.
\emph{Step 3.} Let $\bu\in S_0$ with $\mathbf g := \grad E(\bu)\neq0$. For $t\in(0,1]$ the vector $\bw_t := -t\mathbf g/(2\rho_+)$ satisfies $\norm{\bw_t}_\bu<r$ and $g_\bu(\mathbf g,\bw_t) = -t\norm{\mathbf g}_\bu^2/(2\rho_+)$, so the upper bound in \eqref{eqn:taylor} gives $E(R_\bu(\bw_t))\leq E(\bu)-t(2-t)\norm{\mathbf g}_\bu^2/(4\rho_+)\leq E(\bu)$. Hence the retraction segment from $\bu$ to $\bu_+ := R_\bu(\bw_1)$ stays in the sublevel set. Since the segment is connected and starts at $\bu\in S_0$, and the sublevel set lies in two disjoint balls, the segment stays in $S_0$, and $t = 1$ gives $E(\bu_+)\leq E(\bu)-\norm{\mathbf g}_\bu^2/(4\rho_+)$. The lower bound in \eqref{eqn:taylor} with $\bw = \xi_\bu$ gives
\[
E(\bu^*)\ \geq\ E(\bu)+g_\bu(\mathbf g,\xi_\bu)+\tfrac{\rho_-}{4}\norm{\xi_\bu}_\bu^2\ \geq\ E(\bu)-\norm{\mathbf g}_\bu^2/\rho_- ,
\]
because $g_\bu(\mathbf g,\xi)+\frac{\rho_-}{4}\norm\xi_\bu^2\geq-\norm{\mathbf g}_\bu^2/\rho_-$ for every $\xi$, with equality at $\xi = -2\mathbf g/\rho_-$. Hence $\norm{\mathbf g}_\bu^2\geq\rho_-(E(\bu)-E(\bu^*))$, and the two bounds give $E(\bu_+)-E(\bu^*)\leq(1-\rho_-/(4\rho_+))(E(\bu)-E(\bu^*))$. If $\grad E(\bu) = 0$ for some $\bu\in S_0$, then $\bu = \bu^*$, because otherwise the same lower bound gives $E(\bu^*)\geq E(\bu)+\frac{\rho_-}{4}\norm{\xi_\bu}_\bu^2>E(\bu)$. The iteration then stays at $\bu^*$, and \eqref{eqn:kappa-rate} follows by induction.
\end{proof}

\subsection{Global convergence}\label{sec:global}
\begin{thm}[Global convergence to a single critical point]\label{thm:global}
Let $(\bu_k)$ be generated by \eqref{eqn:iteration} from any $\bu_0\in\mathcal M$ with the constant step $\tau_k = 1/C_E$, and write $\mathbf g_k := \grad E(\bu_k)$. Then
\begin{equation}\label{eqn:suffdec}
E(\bu_{k+1})\ \leq\ E(\bu_k)-\frac{1}{2C_E}\norm{\mathbf g_k}_{\bu_k}^2 .
\end{equation}
In addition, $\grad E(\bu_k)\to0$ and the whole sequence converges to a single critical point $\bu_\infty\in\mathcal M$ with $\sum_k\norm{\bu_{k+1}-\bu_k}_h<\infty$. The same holds, with adjusted constants, for every step size rule satisfying \eqref{eqn:suffdec} and $\norm{\bu_{k+1}-\bu_k}_h\leq C\norm{\mathbf g_k}_{\bu_k}$, in particular for the exact line search $\tau_k = \arg\min_{\tau\in[0,\tau_{\max}]}E(R_{\bu_k}(-\tau\mathbf g_k))$ with $\tau_{\max}\geq1/C_E$.
\end{thm}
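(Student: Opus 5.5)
The proof has three parts: sufficient decrease from a Taylor bound along retraction curves, summability of the gradients, and a \L{}ojasiewicz argument for convergence of the whole sequence.

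\emph{Sufficient decrease.} Fix $k$ with $\mathbf g_k\neq0$, write $\bu=\bu_k$, and set $\eta:=-\mathbf g_k/\norm{\mathbf g_k}_\bu$, so that $(\bu,\eta)\in K$. The step is $-\tau\mathbf g_k=t\eta$ with $t=\norm{\mathbf g_k}_\bu/C_E\leq C_g$, since $C_E\geq1$. Hence Lemma \ref{lem:metric-equiv} bounds $|\frac{d^2}{ds^2}E(R_\bu(s\eta))|$ by $C_E$ for $|s|\leq t$. The first derivative at $s=0$ is $g_\bu(\grad E(\bu),\eta)=-\norm{\mathbf g_k}_\bu$, and Taylor's theorem gives
\[
E(\bu_{k+1})\leq E(\bu)-t\norm{\mathbf g_k}_\bu+\tfrac{C_E}{2}t^2 .
\]
Inserting $t=\norm{\mathbf g_k}_\bu/C_E$ yields \eqref{eqn:suffdec}.

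\emph{Gradients vanish.} $E\geq0$ is bounded below on $\mathcal M$. Telescoping \eqref{eqn:suffdec} gives $\sum_k\norm{\mathbf g_k}_{\bu_k}^2<\infty$, so $\grad E(\bu_k)\to0$. Since $\grad E$ is continuous (Lemma \ref{lem:metric-equiv}), every accumulation point is critical. By \eqref{eqn:retraction-distance} and Lemma \ref{lem:metric-equiv},
\[
\norm{\bu_{k+1}-\bu_k}_h\leq\tau\norm{\mathbf g_k}_h\leq c_1^{-1/2}C_E^{-1}\norm{\mathbf g_k}_{\bu_k},
\]
which is the second hypothesis of the general step-size rule.

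\emph{Convergence of the whole sequence.} This is the main step, since square-summability of the gradients does not give summability of the step lengths. I will use the \L{}ojasiewicz gradient inequality. The energy $E$ is a polynomial and $\mathcal M$ is a real algebraic set, so $E|_{\mathcal M}$ is real analytic. At any critical point $\bar\bu$ there are $\theta\in(0,1/2]$, $c>0$ and a neighborhood on which
\[
|E(\bu)-E(\bar\bu)|^{1-\theta}\leq c\,\norm{\grad_{L^2}E(\bu)}_h .
\]
Here $\grad_{L^2}$ is the $\ip{\cdot,\cdot}_h$-Riemannian gradient. Uniform equivalence of metrics alone does not give $\norm{\grad_{L^2}E(\bu)}_h\lesssim\norm{\grad E(\bu)}_\bu$, because the two gradients come from different projections. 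Instead I will use $\grad E(\bu)=B(\bu)^{-1}(A_\beta(\bu)\bu-\lambda_B\bu)$ together with
\[
\norm{\grad E}_\bu=\sup_{\xi\in T_\bu\mathcal M}\frac{\D E(\bu)[\xi]}{\norm\xi_\bu}\geq c_2^{-1/2}\sup_{\xi\in T_\bu\mathcal M}\frac{\D E(\bu)[\xi]}{\norm\xi_h}=c_2^{-1/2}\norm{\grad_{L^2}E(\bu)}_h .
\]
This only needs continuity of the metric, so it also covers the nonsmooth Thomas--Fermi metric. Thus the \L{}ojasiewicz inequality transfers to $\norm{\mathbf g_k}_{\bu_k}$.

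\emph{Finite length.} I follow the standard Absil--Mahony--Andrews argument. The energies decrease to $E(\bar\bu)$ for an accumulation point $\bar\bu$. By concavity of $s\mapsto s^\theta$, the difference $\varphi_k:=(E(\bu_k)-E(\bar\bu))^\theta$ satisfies
\[
\varphi_k-\varphi_{k+1}\geq\theta\,\frac{E(\bu_k)-E(\bu_{k+1})}{(E(\bu_k)-E(\bar\bu))^{1-\theta}}\gtrsim\norm{\mathbf g_k}_{\bu_k}\gtrsim\norm{\bu_{k+1}-\bu_k}_h ,
\]
whenever $\bu_k$ lies in the \L{}ojasiewicz neighborhood. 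A first-entry argument shows that once some iterate is close enough to $\bar\bu$ with $\varphi_k$ small, the remaining path length is bounded by a multiple of $\varphi_k$, so the iterates never leave the neighborhood. This gives $\sum_k\norm{\bu_{k+1}-\bu_k}_h<\infty$, hence a Cauchy sequence converging to $\bu_\infty=\bar\bu$. The case of finitely many nonzero gradients is trivial.

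\emph{Other step-size rules.} These follow verbatim with the adjusted constants. For exact line search with $\tau_{\max}\geq1/C_E$, the minimum is no larger than the value at $\tau=1/C_E$, which gives \eqref{eqn:suffdec}. The step bound follows from $\tau_k\leq\tau_{\max}$ and \eqref{eqn:retraction-distance}.
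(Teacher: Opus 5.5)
Your proposal is correct and follows the paper's proof: the same Taylor bound with $C_E$, the same duality transfer of the {\L}ojasiewicz inequality from the $\ip{\cdot,\cdot}_h$-projected gradient to $\norm{\grad E(\bu)}_\bu$ with the factor $\sqrt{c_2}$, and the same concavity estimate for the finite path length. The only difference is the localization. You trap the iterates near a single accumulation point by the classical Absil--Mahony--Andrews induction. The paper instead covers the compact set of accumulation points by finitely many {\L}ojasiewicz balls, which gives a uniform inequality (with $\theta=\min_i\theta_{\bv_i}$, restricted to $|E-E_\infty|\leq1$) valid for all but finitely many iterates, so no trapping induction is needed.
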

\begin{proof}
\emph{Step 1.} If $\mathbf g_k = 0$, then $\bu_{k+1} = \bu_k$ and \eqref{eqn:suffdec} is trivial. Otherwise, with $\eta := -\mathbf g_k/\norm{\mathbf g_k}_{\bu_k}$, $s_k := \norm{\mathbf g_k}_{\bu_k}/C_E\leq C_g$ and $\hat E(t) := E(R_{\bu_k}(t\eta))$, we have $\hat E'(0) = g_{\bu_k}(\mathbf g_k,\eta) = -\norm{\mathbf g_k}_{\bu_k}$ as in Step 1 of the proof of Proposition \ref{thm:local-convergence}, and $|\hat E''|\leq C_E$ on $[0,s_k]$ by the definition of $C_E$, because $(\bu_k,\eta)\in K$ and $s_k\leq C_g$. Taylor's theorem gives $E(\bu_{k+1})\leq E(\bu_k)-s_k\norm{\mathbf g_k}_{\bu_k}+\frac{C_E}2s_k^2$, which is \eqref{eqn:suffdec}, and the exact line search on $[0,\tau_{\max}]$ does at least as well as $\tau = 1/C_E$. By \eqref{eqn:retraction-distance}, $\norm{R_\bu(\xi)-\bu}_h\leq\norm\xi_h$ for $\xi\in T_\bu\mathcal M$. Since $\norm\xi_h\leq\norm\xi_\bu/\sqrt{c_1}$, the step $\xi = -\tau_k\mathbf g_k$ gives the step bound $\norm{\bu_{k+1}-\bu_k}_h\leq C\norm{\mathbf g_k}_{\bu_k}$ with $C = 1/(\sqrt{c_1}C_E)$ for $\tau_k = 1/C_E$ and $C = \tau_{\max}/\sqrt{c_1}$ for the exact line search. For a general rule, the step bound is assumed. Summing \eqref{eqn:suffdec} over $k$ and using $E\geq0$ gives $\sum_k\norm{\mathbf g_k}_{\bu_k}^2\leq2C_EE(\bu_0)<\infty$, so $\norm{\mathbf g_k}_h\leq c_1^{-1/2}\norm{\mathbf g_k}_{\bu_k}\to0$ by Lemma \ref{lem:metric-equiv}, and the energies decrease to a limit $E_\infty$. Since $\mathcal M$ is compact and $\grad E$ is continuous, the set $\mathcal A$ of accumulation points of $(\bu_k)$ is nonempty and compact, and every $\bv\in\mathcal A$ is a critical point with $E(\bv) = E_\infty$. For every metric except the Thomas--Fermi metric, the criticality of accumulation points and the convergence of the gradient to zero also follow from \citet[Theorem~4.3.1 and Corollary~4.3.2]{absil2008optimization} and \citet[Prop.~4.7]{boumal2023intromanifolds}, since both step rules achieve at least the decrease of the Armijo trial step $1/C_E$. For the modified $H^1$ metric, this sufficient-decrease argument parallels \citep[Section~5.1]{chen2024fully}.

\emph{Step 2.} Let $\bv\in\mathcal M$. The retraction $R_\bv$ is a real-analytic parametrization of $\mathcal M$ around $\bv$ by \eqref{eqn:iteration} and Section \ref{sec:submanifold}, and $E$ is a polynomial. Hence the {\L}ojasiewicz inequality \citep{loja1963} holds for the projected gradient \citep[Prop.~2.2]{schneider2015convergence}: there are $C_\bv>0$, $\varepsilon_\bv>0$ and $\theta_\bv\in(0,\frac12]$ with
\[
|E(\bu)-E(\bv)|^{1-\theta_\bv}\leq C_\bv\norm{\mathbf q(\bu)}_h \qquad\text{for } \bu\in\mathcal M,\ \norm{\bu-\bv}_h<\varepsilon_\bv ,
\]
where $\mathbf q(\bu) := A_\beta(\bu)\bu-\lambda(\bu)\bu$ is the gradient \eqref{eqn:projection} for the metric $B = \mathbb I$, that is, the projection of $A_\beta(\bu)\bu$ onto $T_\bu\mathcal M$ in $\ip{\cdot,\cdot}_h$. The change of variables $\bx = \mathbb M^{1/2}\bu$ identifies $\ip{\cdot,\cdot}_h$ with the Euclidean inner product, and \citet[Prop.~2.2]{schneider2015convergence} is applied in these coordinates, where the projected Euclidean gradient of $E$ is $\mathbb M^{1/2}\mathbf q(\bu)$ with norm $\norm{\mathbf q(\bu)}_h$. For $\zeta\in T_\bu\mathcal M$, Lemma \ref{lem:metric-equiv} gives $\ip{\mathbf q(\bu),\zeta}_h = \D E(\bu)[\zeta] = g_\bu(\grad E(\bu),\zeta)\leq\norm{\grad E(\bu)}_\bu\norm\zeta_\bu\leq\sqrt{c_2}\,\norm{\grad E(\bu)}_\bu\norm\zeta_h$, and $\zeta = \mathbf q(\bu)$ gives $\norm{\mathbf q(\bu)}_h\leq\sqrt{c_2}\,\norm{\grad E(\bu)}_\bu$. Hence
\begin{equation}\label{eqn:loja}
|E(\bu)-E(\bv)|^{1-\theta_\bv}\leq\sqrt{c_2}\,C_\bv\norm{\grad E(\bu)}_\bu \qquad\text{for } \bu\in\mathcal M,\ \norm{\bu-\bv}_h<\varepsilon_\bv .
\end{equation}
Now let $\bv_1,\ldots,\bv_m\in\mathcal A$ be such that the balls $\{\bu\in\mathcal M: \norm{\bu-\bv_i}_h<\varepsilon_{\bv_i}\}$ cover the compact set $\mathcal A$, let $\mathcal U$ be their union, $\theta := \min_i\theta_{\bv_i}$ and $C_L := \sqrt{c_2}\max_iC_{\bv_i}$. Since $E(\bv_i) = E_\infty$ and $t^{1-\theta}\leq t^{1-\theta_{\bv_i}}$ for $0\leq t\leq1$, \eqref{eqn:loja} gives
\begin{equation}\label{eqn:loja-uniform}
|E(\bu)-E_\infty|^{1-\theta}\leq C_L\norm{\grad E(\bu)}_\bu \qquad\text{for } \bu\in\mathcal U \text{ with } |E(\bu)-E_\infty|\leq1 .
\end{equation}
All but finitely many $\bu_k$ lie in $\mathcal U$, because otherwise a subsequence in the closed set $\mathcal M\setminus\mathcal U$ would have an accumulation point outside $\mathcal U$, which contradicts $\mathcal A\subset\mathcal U$.

\emph{Step 3.} If some $\mathbf g_{k_0} = 0$ the sequence is eventually constant. Otherwise $e_k := E(\bu_k)-E_\infty>0$ decreases to zero, and there is $k_0$ such that $\bu_k\in\mathcal U$ and $e_k\leq1$ for all $k\geq k_0$. For $k\geq k_0$, concavity of $r\mapsto r^\theta$ gives the first inequality, \eqref{eqn:suffdec} and \eqref{eqn:loja-uniform} give the second and the step bound gives the last inequality in
\begin{align*}
e_k^\theta-e_{k+1}^\theta\ &\geq\ \theta e_k^{\theta-1}(e_k-e_{k+1})\ \geq\ \frac{\theta}{C_L\norm{\mathbf g_k}_{\bu_k}}\cdot\frac{\norm{\mathbf g_k}_{\bu_k}^2}{2C_E}\\
&=\ \frac{\theta}{2C_LC_E}\norm{\mathbf g_k}_{\bu_k}\ \geq\ c_\star\norm{\bu_{k+1}-\bu_k}_h ,
\end{align*}
where $c_\star := \theta/(2C_LC_EC)$. Summing over $k\geq k_0$ gives $\sum_{k\geq k_0}\norm{\bu_{k+1}-\bu_k}_h\leq c_\star^{-1}e_{k_0}^\theta<\infty$. Hence $(\bu_k)$ is Cauchy and converges to a point $\bu_\infty$, which is then the only accumulation point, so $\bu_\infty\in\mathcal A$ is a critical point with $E(\bu_\infty) = E_\infty$.
\end{proof}

\section{Numerical experiments}\label{sec:numerics}

This section tests the predictions of Section \ref{sec:rayleigh} in three dimensions with up to $10^9$ unknowns, where the cost of applying each metric enters the comparison. Each metric is tested with Riemannian gradient descent, which Section \ref{sec:convergence} analyzes, and with the Riemannian conjugate gradient method. The convergence analysis does not cover conjugate gradients. However, the practical performance of conjugate gradient methods is related to the conditioning of the Hessian \citep{zheng2025threemetrics}, which Section \ref{sec:rayleigh} analyzes. The tests below show that both methods behave under the six metrics as the Hessian analysis suggests. All tests were run in Python with JAX on one NVIDIA GH200 GPU card, in double precision unless stated otherwise.

\subsection{Setup of the three-dimensional comparisons}\label{ssec:setup}

The domain is $\Omega = (-8,8)^3$ with $\beta = 1600$ and the stirrer potential
\begin{equation}\label{eqn:Vstir}
V(\bx) = x_1^2+x_2^2+4x_3^2+2w_0\,e^{-(x_1-r_0)^2-x_2^2},
\qquad w_0 = 30,\ r_0 = 1,
\end{equation}
a standard model of a laser stirrer in a condensate \citep{bao2004computing}. Its harmonic part is separable and is taken as $\mathbb V_1$, and the Gaussian beam, which couples $x_1$ and $x_2$, is $\mathbb V_2$. The discretization is the $Q^1$ element with Gauss--Lobatto quadrature on a uniform mesh, that is, the second-order finite difference scheme, with $n = 499$ interior points per direction, so $N = 499^3\approx1.2\times10^8$. Every method starts from the Thomas--Fermi profile $\bu_0\propto\sqrt{(\lambda_{\mathrm{TF}}-V)_+/\beta}$, with $\lambda_{\mathrm{TF}}$ defined by $\int_\Omega(\lambda_{\mathrm{TF}}-V)_+\,\ud\bx = \beta$. The shift in the modified $H^1$ metric is $\alpha = 20$. At $\beta = 1600$ the modified $H^1$ flow takes $582$, $207$, $169$ and $182$ iterations at $\alpha = 1$, $5$, $20$ and $100$, and its conjugate gradient variant takes $62$, $37$, $31$ and $32$. At $\alpha = 0$, which gives the classical $H^1$ seminorm, gradient descent does not reach $10^{-12}$ in $600$ iterations and conjugate gradients needs $114$. Among these values, $\alpha = 20$ gives the fewest iterations for both methods. Conjugate gradients is much less sensitive to $\alpha$ than gradient descent.

The six metrics of Table \ref{tab:metric-list} are compared. The Thomas--Fermi metric uses $\ell = 1/2$, and $\lambda(\bu)$ in \eqref{eqn:lambda-u} is recomputed from the energy evaluation without a solve. The operators $-\Delta_h+\alpha\mathbb I$ and $-\Delta_h+\mathbb V_1$ are separable for the potential \eqref{eqn:Vstir} and are applied exactly by tensor-product direct solvers \citep{liu2024simple,liu2026gpu}. The other four are applied by conjugate gradients preconditioned with $(-\Delta_h+\mathbb V_1)^{-1}$, stopped at relative residual $10^{-8}$.

Every method takes the step size that minimizes the energy along the retracted ray of its search direction $\eta_k$, $\tau_k = \arg\min_\tau E(R_{\bu_k}(\tau\eta_k))$, where $\eta_k = -\grad E(\bu_k)$ for gradient descent. This costs two applications of $-\Delta_h+\mathbb V$ and no solve, since the energy along the ray is a rational function of $\tau$ whose coefficients are inner products of $\bu_k$ and $\eta_k$. Conjugate gradients is the geometric conjugate gradient method \citep[Algorithm~13]{absil2008optimization} with the Fletcher--Reeves parameter \citep[(8.28)]{absil2008optimization}, in which each gradient norm is evaluated in the metric at its own iterate. The method transports the previous direction by the tangent projection \eqref{eqn:projection}, as in \citet[(8.10)]{absil2008optimization}, and restarts to steepest descent whenever the result is not a descent direction. Besides the wall clock time, we report the machine-independent number of applications of the tensor-product solver. An exact metric application counts as one, and an inexact one counts one per inner conjugate gradient iteration plus two for the separable warm start and the initial residual of each inner solve. All runs stop at $|E(\bu_k)-E^*|<10^{-12}$, where the reference $E^* = 9.012256553261389$ is computed on the same grid by a Davis--Yin splitting scheme \citep{li2026splitting}, which is not part of the comparison and is run until its residual stops decreasing. As a check of the discretization, Richardson extrapolation from the $499^3$ and $799^3$ grids reproduces the $Q^{20}$ value of Section \ref{ssec:results} to $6\times10^{-10}$. On the larger boxes of Sections \ref{ssec:tfbeta} and \ref{ssec:domain}, the step size of the Davis--Yin scheme decreases like $L^{-2}$, so the references for $\beta = 6400$ and $102400$ and for $L = 32$ are computed by Riemannian conjugate gradients in the modified $a_0$ metric, run until the residual stops decreasing.

\subsection{Comparison of the metrics}\label{ssec:results}

\begin{figure}[tp]
 \centering
 \includegraphics[width=\linewidth]{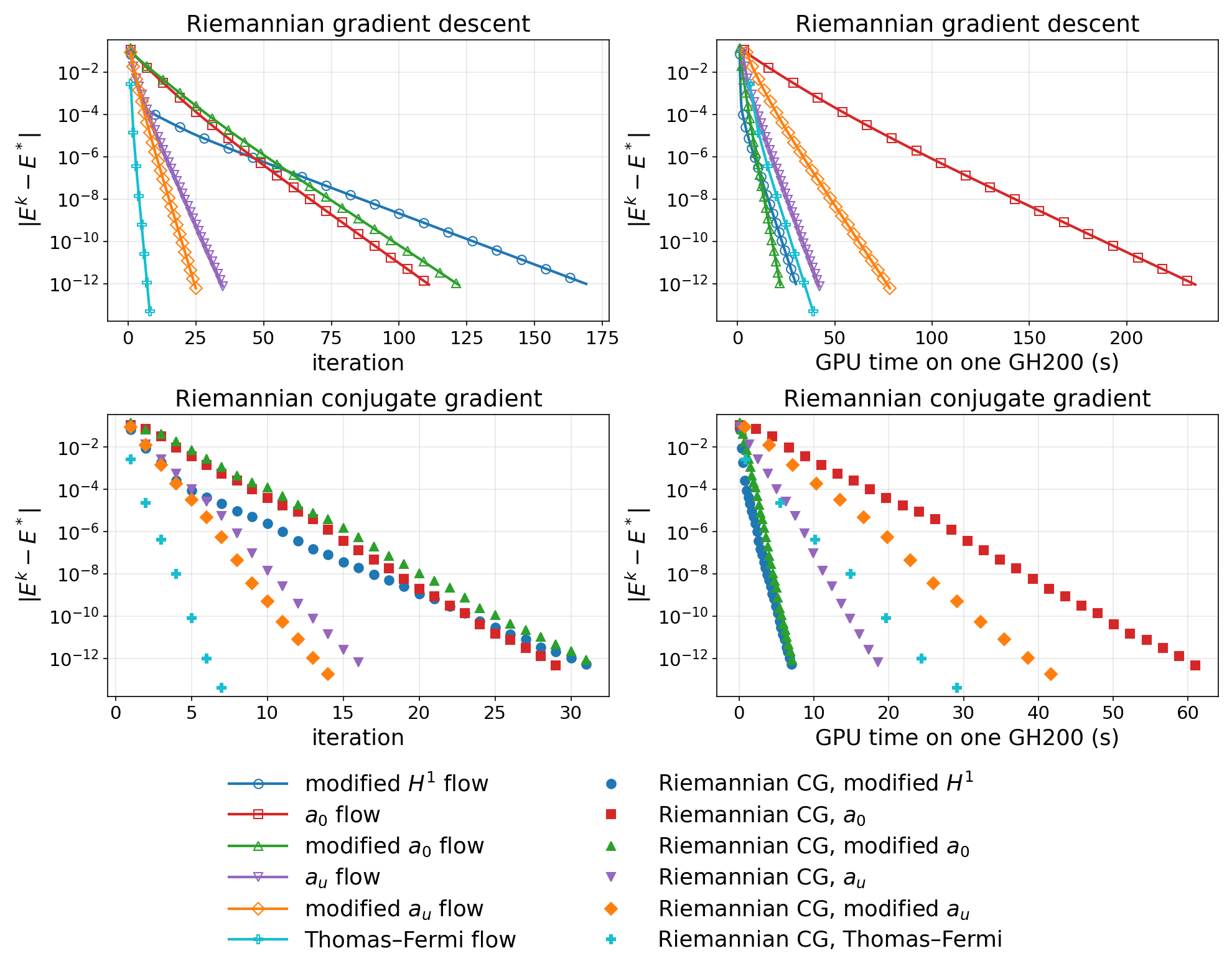}
 \caption{The metrics on the stirrer potential \eqref{eqn:Vstir} at $\beta = 1600$ from the Thomas--Fermi start, on a $499^3$ grid, with $\ell = 1/2$ in the Thomas--Fermi metric: the energy error against the iteration count on the left and against GPU time on the right, gradient descent in the top row and conjugate gradients in the bottom one. Color and marker indicate the metric, and the horizontal scales differ between the rows.}
 \label{fig:metrics}
\end{figure}

Figure \ref{fig:metrics} reports the comparison on the $499^3$ grid. Among the three classical metrics, the $a_u$ metric needs the fewest iterations but is not the fastest, because its operator depends on the iterate and is applied inexactly, with about $17$ preconditioner applications per iteration. The modified $a_0$ metric needs about as many iterations as the $a_0$ metric (Proposition \ref{prop:modified-a0}). Since each application of this metric is a single tensor-product solve, it needs $244$ applications of the solver against $3994$ for the $a_0$ metric and is an order of magnitude faster. Its conjugate gradient variant and that of the modified $H^1$ metric are the fastest methods. Conjugate gradients reduces the iteration count most where gradient descent is slowest, as expected if the rate of conjugate gradients is governed by $\sqrt\kappa$ rather than by $\kappa$.

The modified $a_u$ metric needs fewer iterations than the $a_u$ metric (Proposition \ref{prop:modified-au}). The halving of the condition number is asymptotic in $\beta$ and is not yet reached at $\beta = 1600$. Each iteration applies the metric twice, however, and $(-\Delta_h+\mathbb V_1)^{-1}$ preconditions $-\Delta_h+\mathbb V+3\beta\diag(\bu^2)$ less well than $A_\beta(\bu)$, so both variants are about twice as slow in wall clock time.

The Thomas--Fermi metric needs the fewest iterations. Although it needs two solves per iteration, it is faster than the $a_u$ flow, and among the gradient flows with an inexactly applied metric it uses the fewest solver applications. Its inner solves are the most expensive, $36.6$ preconditioner applications per metric application against $17.0$ for the $a_u$ metric and $23.7$ for the modified $a_u$ metric, because $-\Delta_h+\mathbb V_1$ is a poor approximation of an operator whose potential is clipped at $\lambda(\bu)$. At $\beta = 1600$ the small number of outer iterations compensates for this cost.

The iteration counts are essentially mesh independent. The five metrics other than the Thomas--Fermi metric were also run on a $99^3$ grid, with $125$ times fewer unknowns. On that grid, every iteration count differs from its value on the $499^3$ grid by at most one.

The exact spectrum on the stirrer \eqref{eqn:Vstir} at $\beta = 1600$, computed on the $499^3$ grid as in Appendix \ref{app:spectra}, accounts for the iteration counts of Figure \ref{fig:metrics}. The condition numbers are $1.30$ for the Thomas--Fermi metric, $4.73$ for the modified $a_u$ metric, $6.75$ for the $a_u$ metric, $20.8$ for the $a_0$ metric, $24.3$ for the modified $a_0$ metric and $39.9$ for the modified $H^1$ metric. If the rate $(\kappa(\bu^*)-1)/(\kappa(\bu^*)+1)$ of Theorem \ref{thm:asymptotic-rate} governs the number of iterations $k$ to a fixed tolerance, $2k\log((\kappa(\bu^*)+1)/(\kappa(\bu^*)-1))$ is the same for every metric. It lies between $20.1$ and $21.5$ for the four metrics with $4.7\leq\kappa(\bu^*)\leq24.3$, whose iteration counts differ by a factor of five, and between $25.5$ and $27.8$ for their conjugate gradient variants with $\sqrt{\kappa(\bu^*)}$ in place of $\kappa(\bu^*)$. The other two metrics deviate in opposite directions. The maximizing eigenvectors, computed on the $199^3$ grid, lie at the center of the trap for four metrics. For the modified $a_u$ metric the maximizing eigenvector is the grid-scale mode that attains the upper edge of \eqref{eqn:modified-au-spectrum} where $\mathbb V$ is largest, and for the modified $H^1$ metric it also lies where $\mathbb V$ is largest and the iterates are negligibly small, so the metric needs fewer iterations than its condition number predicts. The Thomas--Fermi metric needs more, since its few iterations are too few for the asymptotic regime to be reached.

To test the comparison on a second discretization, the same problem is solved with $Q^{20}$ spectral elements on $50$ cells per direction, so $n = 999$ and the problem has about $10^{9}$ unknowns. The mass matrix is then the diagonal of the Gauss--Lobatto weights rather than $h^3\mathbb I$, and the one-dimensional operators are diagonalized numerically. The reference $E^* = 9.012310296639$ is computed on the same grid by the Davis--Yin scheme, run until its residual stops decreasing, and the runs stop at $|E-E^*|<5\times10^{-12}$. Figure \ref{fig:metrics999} shows that the iteration counts carry over from the $499^3$ grid and are slightly lower throughout. The ordering by iterations is unchanged, and the two metrics with separable operators remain the cheapest per iteration. In wall clock time, however, the Thomas--Fermi flow is now slower than the $a_u$ flow, whereas at $499^3$ it was faster, and the difference comes from the inner solve.

\begin{figure}[tp]
 \centering
 \includegraphics[width=\linewidth]{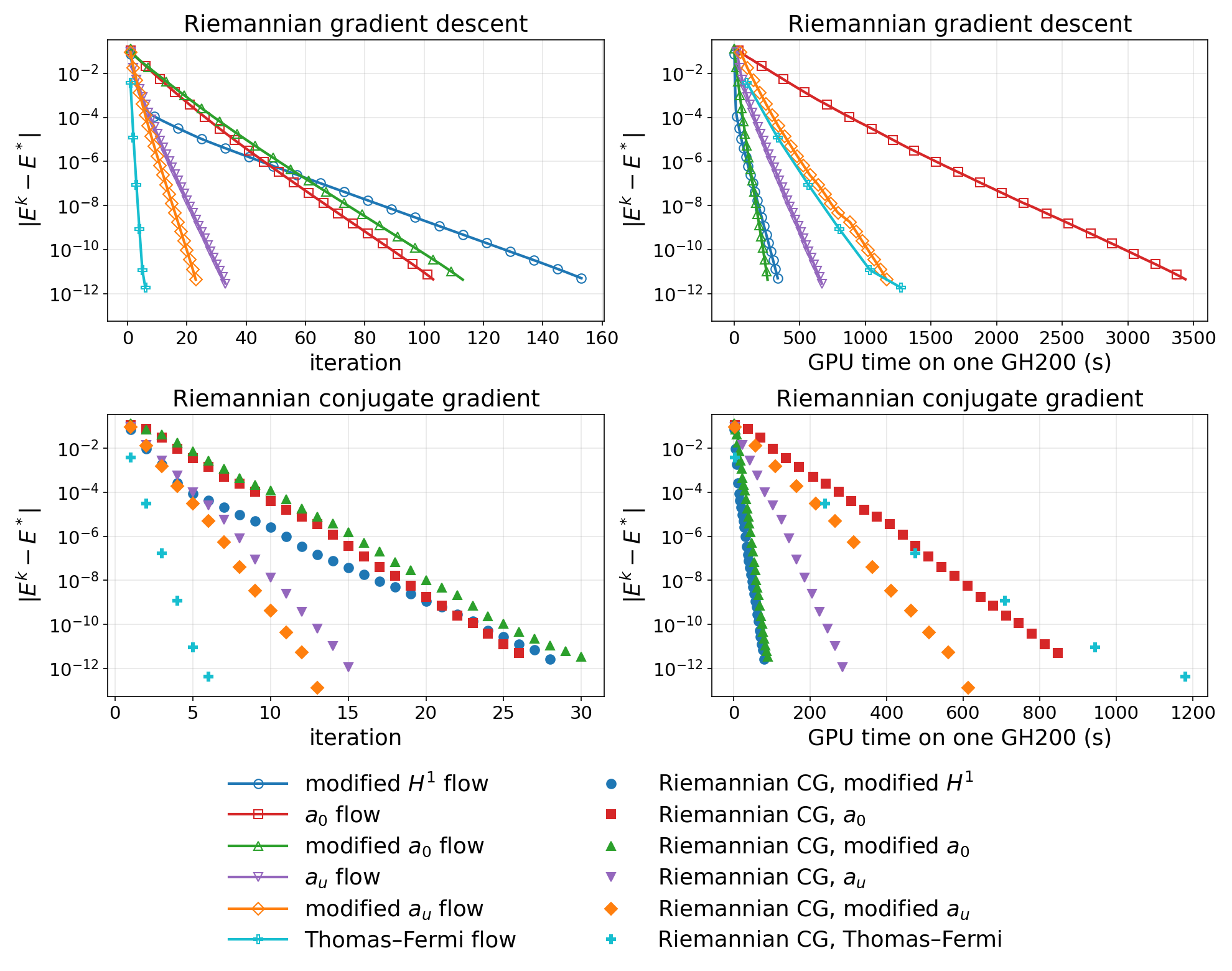}
 \caption{The same comparison with $Q^{20}$ spectral elements on a $999^3$ grid, about $10^{9}$ unknowns, in the panels and the drawing convention of Figure \ref{fig:metrics}.}
 \label{fig:metrics999}
\end{figure}

In double precision one vector of $999^3$ unknowns is $8$ gigabytes, so the metrics that invert a nonseparable operator fit into memory only if the Krylov vectors of the inner solve are held in single precision, while the residual is formed and the correction is accumulated in double precision, as in \citet{liu2026gpu}. At $n = 499$, where the double-precision solve also fits, the $a_0$ flow with the mixed-precision solve reaches the tolerance in the same number of iterations as with the double-precision one. Every method fits, with peak memory between $59.5$ and $78.1$ gigabytes of the $95.6$ available, once $B(\bu)\bx$ is consumed by a reduction and never stored. The Thomas--Fermi metric requires a further change of the inner solve. Its operator is the one that the separable preconditioner approximates worst, with about $75$ inner iterations against $29$ for $A_\beta(\bu)$. Over that many iterations, the recursively updated residual of the single-precision recurrence drifts so far from the true residual that the metric is applied far less accurately than the reported $10^{-6}$, and the outer iteration stalls with a collapsing step size. Capping the single-precision solve at $25$ iterations and recomputing the residual in double precision six times, at the same $10^{-6}$ target, restores convergence. The two Thomas--Fermi runs of Figure \ref{fig:metrics999} were computed with the capped solves, at about $2.4$ times the inner work per iteration.

\subsection{Dependence on the interaction strength}\label{ssec:tfbeta}

For the one-dimensional harmonic trap, Remark \ref{rem:tf-orders} shows that the condition number of the Thomas--Fermi metric is bounded in $\beta$, while that of the $a_u$ metric grows like $\lambda_1(A_\beta)^{2/3}$, those of its family \eqref{eqn:family} at least as fast, and the bound for the $a_0$ metric like $\lambda_1(A_\beta)^{5/3}$. We test whether the same holds for the stirrer potential in three dimensions. On the box $(-8,8)^3$ of Section \ref{ssec:results} the range of $\beta$ is limited, since the Thomas--Fermi radius grows like $\beta^{1/5}$ and leaves $(-8,8)^3$ near $\beta = 2\times10^4$. Thus we run the test on the larger box $(-16,16)^3$ with the same $499^3$ grid and the same initial value, and we vary only $\beta$, with $\beta = 1600$, $6400$ and $102400$, a factor of $64$. With $a\approx10^{-3}$ as in Section \ref{sec:gpe}, these values correspond to condensates of about $6.4\times10^4$ to $4.1\times10^6$ particles, inside the experimental range but well above the $\beta = 200$ used in \citet{bao2004computing}. Figures \ref{fig:betagd} and \ref{fig:betacg} show the error curves, one row per $\beta$.

Over that range $\lambda_1(A_\beta)$ grows by a factor of $4.68$, and the iteration counts follow the predicted growth of the condition numbers closely. The count of the $a_u$ flow grows by a factor of $2.74$, against the factor $\lambda_1^{2/3} = 2.80$ of Corollary \ref{thm:TF}, and that of its conjugate gradient variant grows by $1.69$, against the factor $\sqrt{2.80} = 1.67$ of a $\sqrt\kappa$ iteration count. The count of the modified $a_0$ flow grows faster than that of the $a_u$ flow and reaches the iteration cap at the largest $\beta$, in line with the growth $\lambda_1^{5/3} = 13.1$ of the bound of Proposition \ref{prop:modified-a0}. The count of the modified $a_u$ flow grows like that of the $a_u$ flow, with a smaller constant. The iteration count of the Thomas--Fermi metric does not grow. Its error curves have the same slope in the three rows, while every other curve flattens as $\beta$ grows, in agreement with \eqref{eqn:tf-uniform} and Remark \ref{rem:tf-orders}. At the largest $\beta$ the boundary layer is about $\lambda_1^{-1/6} = 0.46$ wide and is resolved by about seven cells of size $h = 0.064$. The resolution does not affect the comparison, since each run is measured against a reference on its own grid. The sweep thus measures the conditioning of the discrete problem in each metric, which is the quantity bounded in Section \ref{sec:rayleigh}.

The modified $H^1$ flow with the fixed shift $\alpha = 20$ behaves unlike the other two Euclidean metrics. Its iteration count changes little with $\beta$ and eventually falls below that of the $a_0$ flow. The three-dimensional sweep of Figure \ref{fig:spectrum3d} explains this through the negative part of $Z$. For the $a_0$ metric, $\zeta_-$ equals $\lambda_1(A_\beta)$ exactly in every run because the maximum in \eqref{eqn:defect} is attained where the density vanishes, whereas for the modified $H^1$ metric $\zeta_-$ is at most $\alpha$ to leading order in the Thomas--Fermi approximation $\beta\bu^2\approx(\lambda_1(A_\beta)-\mathbb V)_+$, because then $\mathbb V+3\beta\bu^2\geq\lambda_1(A_\beta)$ everywhere. The measured values are $19.4$, $18.2$, $16.8$ and $15.2$ against $\alpha = 20$ at $\beta = 10^2$, $10^3$, $10^4$ and $10^5$. Since $\sigma_T$ grows like $\lambda_1(A_\beta)^{1/3}$, the lower bound $\sigma_T/(\sigma_T+\zeta_-)$ of Theorem \ref{thm:rayleigh-quotient-bounds} tends to one for the modified $H^1$ metric and to zero for the $a_0$ metric, and the exact $\min\rho_\bu$ in Figure \ref{fig:spectrum3d} rises for the modified $H^1$ metric and falls for the $a_0$ metric. The upper edge, however, is governed by $\zeta_+$, which is of the order of $\max_i\mathbb V_i$. This suggests that the condition number of the modified $H^1$ metric is determined by the size of the box rather than by $\beta$ (Section \ref{ssec:domain}). The condition numbers of the modified $H^1$ and $a_0$ metrics cross between $\beta = 10^3$ and $10^4$.

\begin{figure}[tp]
 \centering
 \includegraphics[width=\linewidth,height=0.56\textheight,keepaspectratio]{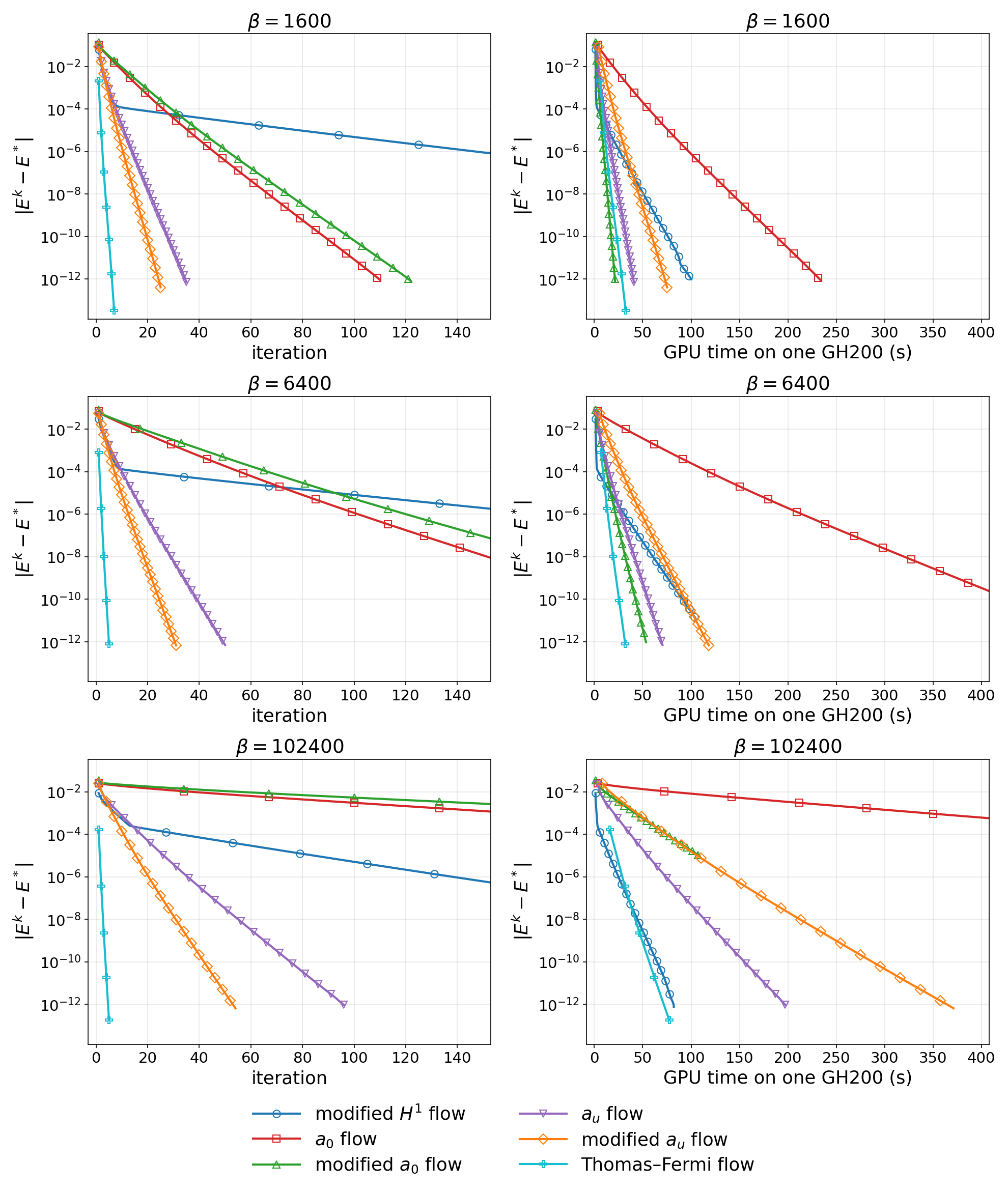}
 \caption{The six gradient flows against the interaction strength, on $(-16,16)^3$ at $499^3$, in the drawing convention of Figure \ref{fig:metrics}, with $\beta = 1600$, $6400$ and $102400$ from top to bottom and axes cut at $150$ iterations and $400$ seconds. A curve that leaves a panel does not converge inside it.}
 \label{fig:betagd}
\end{figure}

\begin{figure}[tp]
 \centering
 \includegraphics[width=\linewidth,height=0.56\textheight,keepaspectratio]{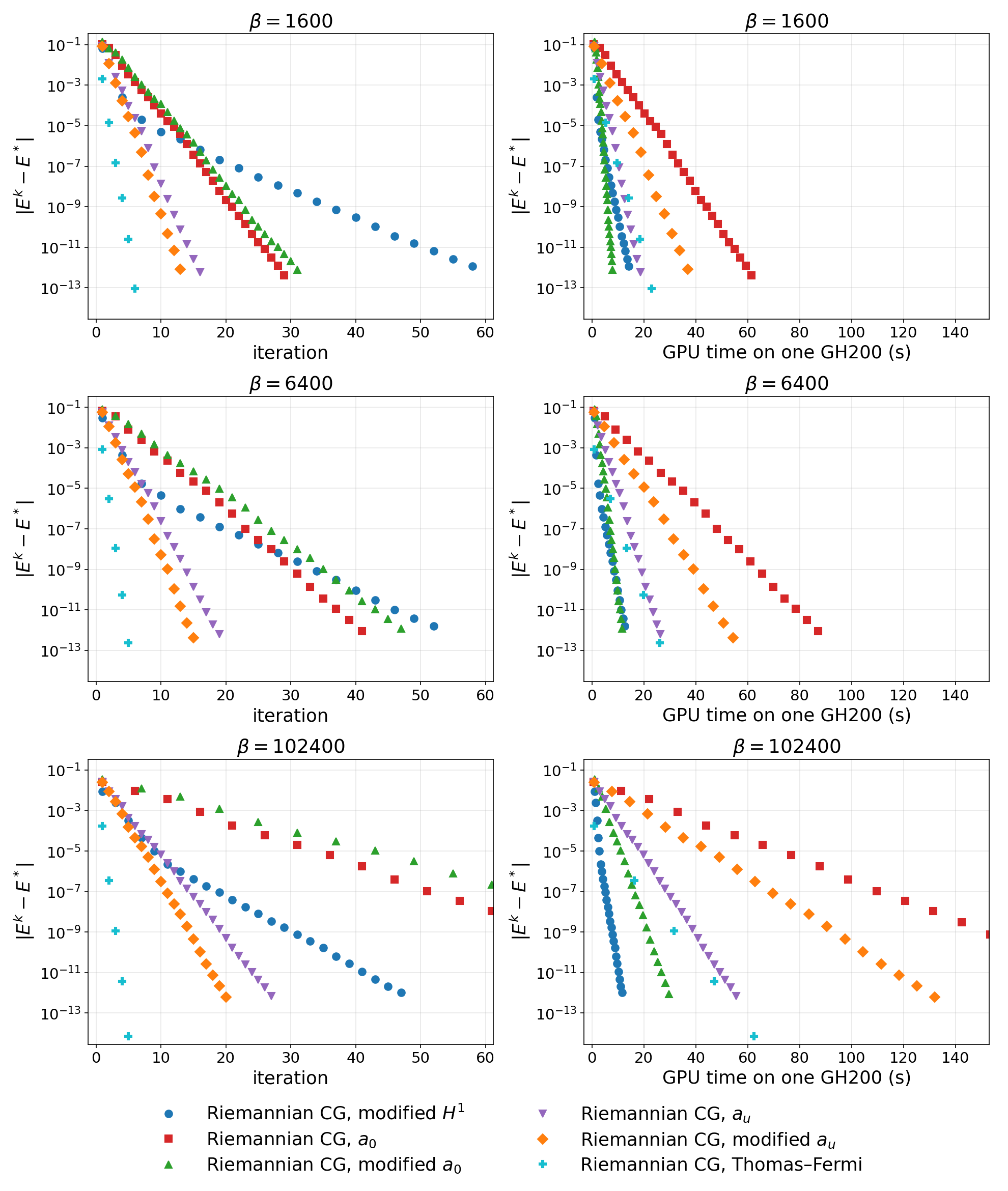}
 \caption{The same for the six conjugate gradient methods, cut at $60$ iterations and $150$ seconds.}
 \label{fig:betacg}
\end{figure}

\subsection{Dependence on the domain size}\label{ssec:domain}

Remark \ref{rem:domain-robust} bounds the condition number of the modified $a_0$ metric uniformly in the size of the box for a fixed mesh size, provided the coercivity constant stays bounded below, while the bound for the modified $H^1$ metric grows with the box. We test domain robustness numerically with a fixed number of unknowns instead of a fixed mesh size, on nested boxes $L = 8$, $16$ and $32$, with the stirrer potential \eqref{eqn:Vstir}, whose harmonic part is confining as the remark requires, and with the same $\beta = 1600$, initial value, $\alpha = 20$, inner solver and $499^3$ unknowns. The mesh is thus coarser on the larger boxes, with $h = 0.032$, $0.064$ and $0.128$. The run with $L = 16$ is the $\beta = 1600$ run of Section \ref{ssec:tfbeta}. Figures \ref{fig:domaingd} and \ref{fig:domaincg} show the error curves.

\begin{figure}[tp]
 \centering
 \includegraphics[width=\linewidth,height=0.52\textheight,keepaspectratio]{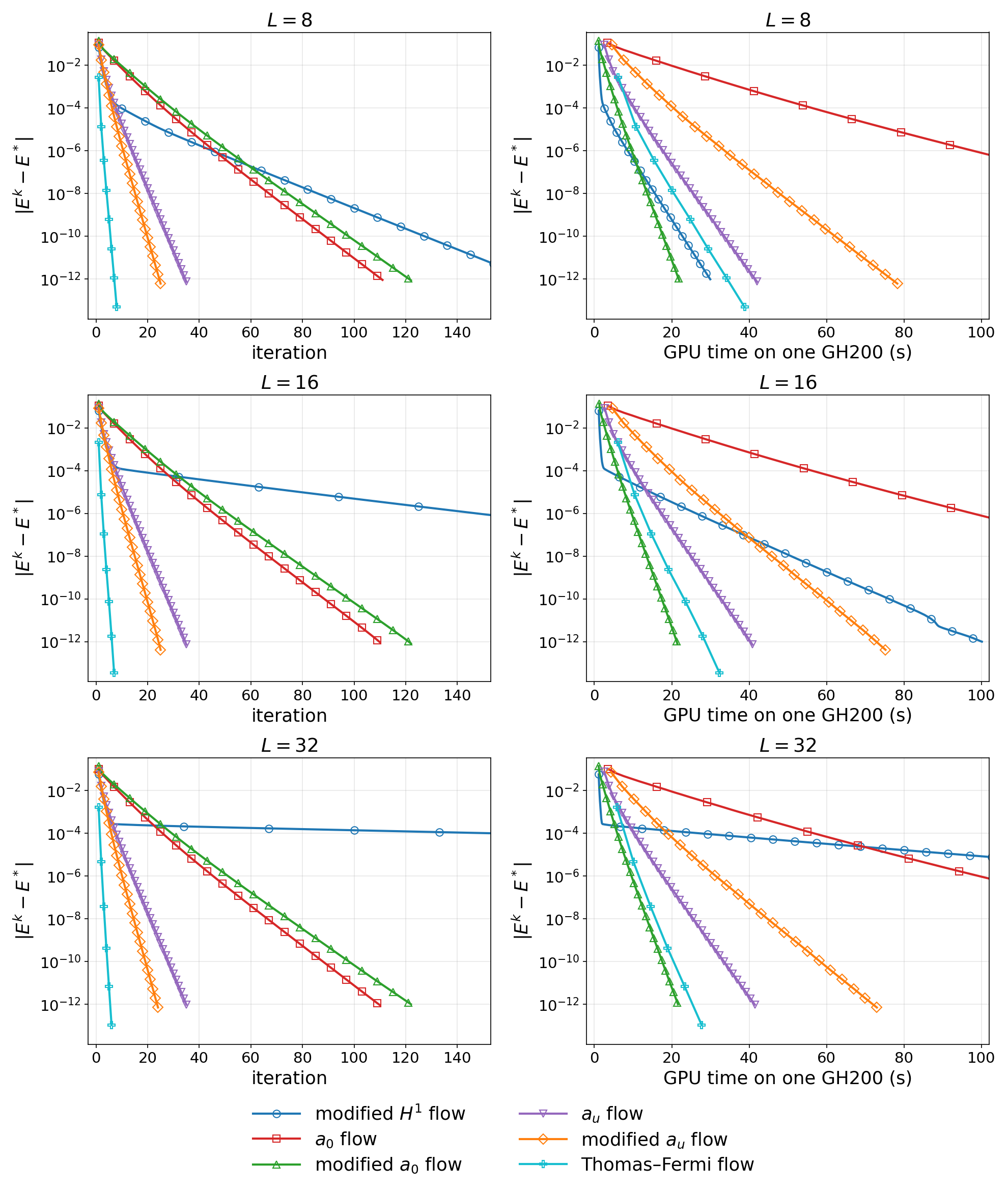}
 \caption{The six gradient flows on the nested boxes $(-L,L)^3$, each on a $499^3$ grid, in the drawing convention of Figure \ref{fig:metrics}: the energy error against the iteration count on the left and against GPU time on the right, with $L = 8$, $16$ and $32$ from top to bottom. All three rows use the same axes, cut at $150$ iterations and $100$ seconds, and a curve that leaves a panel does not converge inside it.}
 \label{fig:domaingd}
\end{figure}

\begin{figure}[tp]
 \centering
 \includegraphics[width=\linewidth,height=0.52\textheight,keepaspectratio]{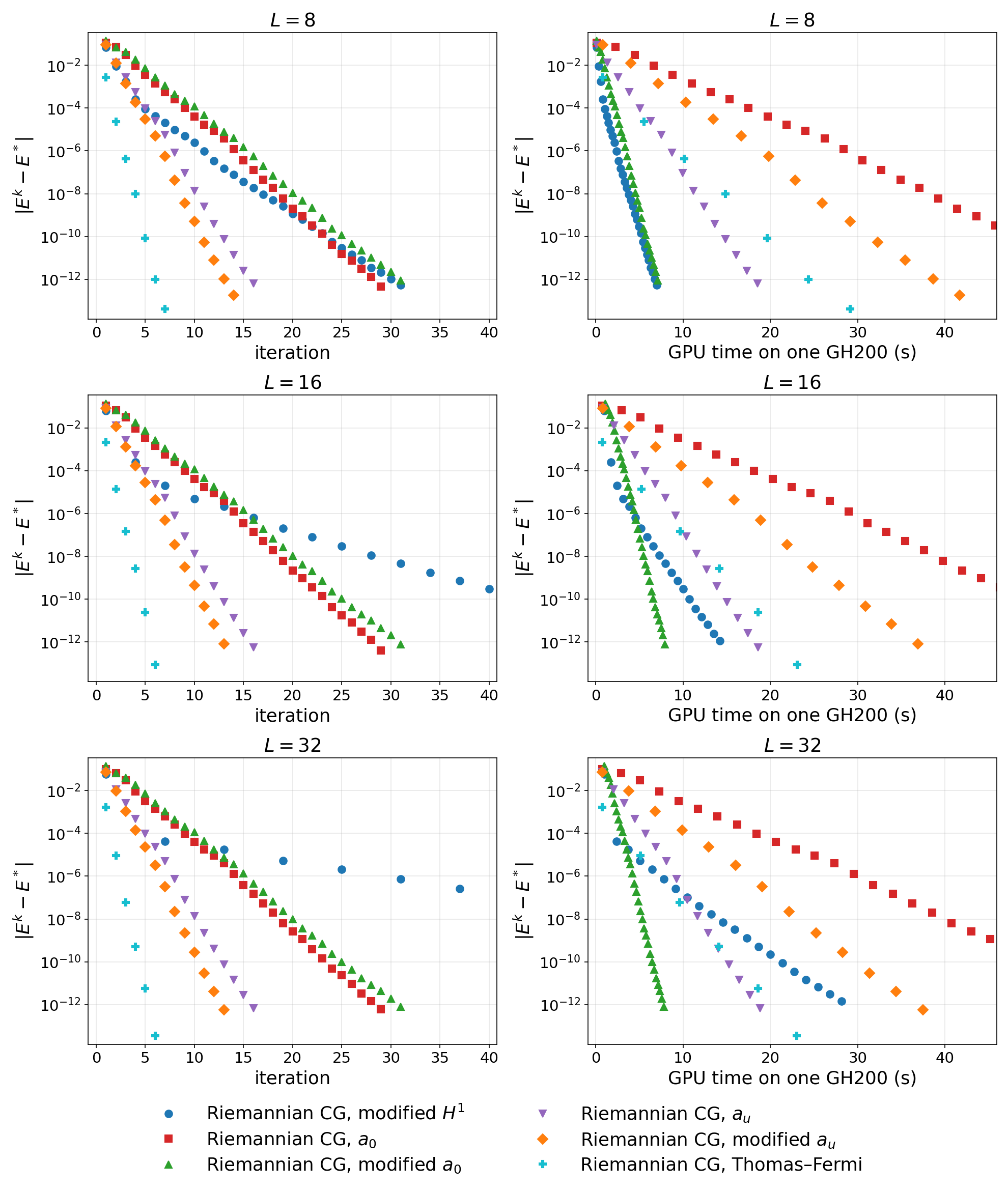}
 \caption{The same for the six conjugate gradient methods, on the same boxes and with the same convention, cut at $40$ iterations and $45$ seconds.}
 \label{fig:domaincg}
\end{figure}

For the five metrics built from the potential, the iteration counts are independent of $L$ to within one iteration for four of them and to within two iterations for the Thomas--Fermi flow. The error curves of these five metrics in Figures \ref{fig:domaingd} and \ref{fig:domaincg} are also nearly independent of $L$. The iteration count of the modified $H^1$ flow grows with $L$, and at $L = 32$ it does not reach the tolerance within the iteration cap. Its conjugate gradient variant needs about twice as many iterations per doubling of $L$. Since one iteration costs the same in all three boxes, every method except the two using the modified $H^1$ metric takes about the same wall clock time at $L = 32$ as at $L = 8$, or less. With conjugate gradients, the modified $a_0$ metric is as fast as the modified $H^1$ metric on the smallest box and the fastest of the six conjugate gradient methods on the two larger boxes. The $a_0$ metric is as robust in iterations as its modified version and an order of magnitude more expensive at every $L$.

\section{Conclusions}\label{sec:conclusions}
We have analyzed fully discretized Sobolev gradient flows for the Gross--Pitaevskii ground state problem through the condition number of the Riemannian Hessian at the ground state. The quadratic form of this Hessian does not depend on the metric, so bounds of its Rayleigh quotient give a uniform analysis of the flows for different metrics. The bound for the shifted Laplacian metric grows with the domain size, and so do its iteration counts in the tests. The condition number of the energy-adaptive metric grows like the two-thirds power of the ground state eigenvalue in the Thomas--Fermi regime, as proved for the one-dimensional harmonic trap and observed in two and three dimensions. Inspired by these bounds, we also consider three new metrics: a metric with only the separable part of the potential, which is robust in the domain size and inverted exactly by tensor-product solvers, a modified energy-adaptive metric that asymptotically halves the condition number, and a Thomas--Fermi metric whose condition number is bounded uniformly in the interaction strength, as proved for the one-dimensional harmonic trap and observed in three dimensions. For every metric, the local rate of the fixed-step iteration is determined by the spectrum of the Hessian, and the iterates converge to a single critical point from any initial value. The three-dimensional experiments are consistent with the analysis for both Riemannian gradient descent and Riemannian conjugate gradient methods. The Thomas--Fermi flow needs the fewest iterations. The shifted Laplacian metric and the metric with only the separable part of the potential are the fastest in computational time, because their iterations are the cheapest.

\section*{Acknowledgements}

This work used DeltaAI at the National Center for Supercomputing
Applications (NCSA) through allocation MTH260013 from the Advanced
Cyberinfrastructure Coordination Ecosystem: Services \& Support
(ACCESS) program, which is supported by U.S.\ National Science
Foundation grants \#2138259, \#2138286, \#2138307, \#2137603,
and \#2138296.

\section*{Funding}

Xiangxiong Zhang was supported in part by NSF DMS-2208518.

\section*{Use of AI-assisted technologies}

The authors have used generative AI tools such as ChatGPT and Claude
to assist mathematical discussions, algorithm implementations and drafting the manuscript. After
using AI tools, the authors have carefully reviewed, edited and
polished the manuscript, and take full responsibility for the
content.

\section*{Data Availability Statements}

The codes of all numerical tests are available on \url{https://github.com/zhan1966/GPE/}.

\bibliographystyle{abbrvnat}
\bibliography{reference}

\appendix
\setcounter{table}{0}
\section{Exact spectra of the Riemannian Hessian}\label{app:spectra}
We test the spectral predictions of Section \ref{sec:rayleigh} on $(-L,L)^d$ with $V=|\bx|^2$, using second-order finite differences and $\ell=1/2$ in the Thomas--Fermi metric. In one dimension, $N=3000$ for $\beta\leq100$ and $N=4000$ otherwise, with $L=\max\{6,1.25(3\beta/4)^{1/3}+4\}$. In two dimensions the grid is $200^2$ with $L=\max\{6,(2\beta/\pi)^{1/4}+3.5\}$, and in three dimensions it is $199^3$ with $L=1.25(15\beta/(8\pi))^{1/5}+4$. All three values of $L$ are derived from the Thomas--Fermi radius. At the largest $\beta$ in three dimensions, the layer of width $\lambda_1^{-1/6}$ has about $2.5$ grid points. Repeating the three-dimensional cases $\beta=10^5$ on $399^3$ and $\beta=3\times10^5$ on $299^3$ reproduces all condition numbers within $0.4\%$, except for the modified $a_u$ metric.

Ground states are computed by the $a_u$ flow with exact line search followed by Newton iterations until the residual reaches rounding error. Extreme Hessian eigenvalues are computed from the pencil $(\mathbb MH^*,\mathbb MB^*)$ on the tangent space, by a dense solver in one dimension, shift-invert Lanczos in two dimensions and locally optimal block preconditioned conjugate gradients with the preconditioner $A_1^{-1}$ in three dimensions. The code reproduces $\lambda_1/\lambda_2=0.99909$ and $\max_i|1-\rho_i|=0.94192$ from \citet[Section~7.2]{doi:10.1142/S0218202523500343} to five digits.

Table \ref{tab:spectrum} tests the bounds of Corollary \ref{coro:conditioning} for the $a_u$ metric in the harmonic trap $V = |\bx|^2$ in one, two and three dimensions. For $\beta\geq10^2$, the lower bound $1-\lambda_1(A_\beta)/\Lambda$ is within $9\%$ of the exact minimum in one dimension and within $21\%$ in two and three dimensions, the maximum approaches the upper bound $2$, and the bound on $\kappa_{a_u}(\bu^*)$ is within $2\%$ of the exact value for $\beta\geq10^4$ in one dimension. For comparison, the table also lists the bound $1-\lambda_1(A_\beta)/\lambda_2(A_\beta)$ that uses the spectral gap alone, which is known not to be sharp \citep[Remark~3.3]{doi:10.1142/S0218202523500343}. For $\beta\geq10^4$ this bound is smaller than the exact minimum by a factor between $31$ and $3\times10^4$, because in the Thomas--Fermi regime $\Lambda = \lambda_1(A_{3\beta})$, and $\lambda_1(A_{3\beta})-\lambda_1(A_\beta)$ grows with $\beta$ while the gap $\lambda_2(A_\beta)-\lambda_1(A_\beta)$ closes.

The table also confirms the asymptotics of Corollary \ref{thm:TF}. In one dimension, the slopes against $\lambda_1$ over $\beta\in[10^4,3\times10^5]$ are $-0.66$ for $\min\rho_{\bu^*}$, $0.333$ for $\lambda_1(A_{3\beta})-\lambda_1$ and $-1.00$ for the gap $\lambda_2(A_\beta)-\lambda_1$. The ratio $(\lambda_1(A_{3\beta})-\lambda_1)/\lambda_1^{1/3}$ is $2.40$, $2.41$, $2.41$ at $\beta=10^3,10^4,3\times10^5$, against $\mu_1 = 2.4105$ from collocation of the Painlev\'e-II equation and diagonalization of $M_0$. In two dimensions the ratio is $2.35$ at $\beta=10^4$.

\begin{table}[tp]
\centering\footnotesize
\setlength{\tabcolsep}{3pt}
\begin{tabular}{rrrrrrrrr}
\toprule
$\beta$ & $\lambda_1(A_\beta)$ & $\lambda_2-\lambda_1$ & $\lambda_1(A_{3\beta})-\lambda_1$ & $\min\rho_{\bu^*}$ & $1-\lambda_1/\lambda_2$ & $1-\lambda_1/\Lambda$ & $\max\rho_{\bu^*}$ & $\kappa_{a_u}(\bu^*)$ (bound) \\
\midrule
\multicolumn{9}{l}{$d = 1$, $N = 3000$ for $\beta \leq 10^2$ and $N = 4000$ otherwise} \\
$1$ & 1.383 & 1.82 & 0.734 & 0.6751 & 0.568 & 0.5680 & 1.0000 & 1.5 (2.5) \\
$10^2$ & 17.84 & 0.259 & 6.18 & 0.2806 & $1.4\times10^{-2}$ & 0.2573 & 1.7849 & 6.4 (7.7) \\
$10^3$ & 82.56 & 0.056 & 10.4 & 0.1167 & $6.8\times10^{-4}$ & 0.1123 & 1.9498 & 16.7 (17.8) \\
$10^4$ & 383.2 & 0.0121 & 17.5 & 0.04429 & $3.1\times10^{-5}$ & 0.04363 & 1.9890 & 44.9 (45.8) \\
$3\times10^5$ & 3699 & 0.00125 & 37.2 & 0.009998 & $3.4\times10^{-7}$ & 0.009964 & 1.9989 & 199.9 (200.7) \\
\midrule
\multicolumn{9}{l}{$d = 2$, $200^2$ grid} \\
$10$ & 3.241 & 1.49 & 1.85 & 0.5123 & 0.315 & 0.3637 & 1.0316 & 2.0 (4.6) \\
$10^2$ & 8.286 & 0.661 & 4.14 & 0.3839 & $7.4\times10^{-2}$ & 0.3336 & 1.4677 & 3.8 (5.8) \\
$10^3$ & 25.36 & 0.219 & 6.68 & 0.2251 & $8.6\times10^{-3}$ & 0.2084 & 1.8172 & 8.1 (9.6) \\
$10^4$ & 79.84 & 0.0695 & 10.1 & 0.1170 & $8.7\times10^{-4}$ & 0.1124 & 1.9456 & 16.6 (17.8) \\
\midrule
\multicolumn{9}{l}{$d = 3$, $199^3$ grid} \\
$10^2$ & 5.932 & 1.10 & 2.77 & 0.4001 & 0.157 & 0.3187 & 1.2128 & 3.0 (5.8) \\
$10^3$ & 13.30 & 0.493 & 4.76 & 0.2972 & $3.6\times10^{-2}$ & 0.2635 & 1.6090 & 5.4 (7.5) \\
$10^4$ & 32.58 & 0.200 & 7.07 & 0.1911 & $6.1\times10^{-3}$ & 0.1782 & 1.8431 & 9.6 (11.2) \\
$10^5$ & 81.44 & 0.0797 & 10.0 & 0.1137 & $9.8\times10^{-4}$ & 0.1095 & 1.9403 & 17.1 (18.3) \\
\bottomrule
\end{tabular}
\caption{Exact extreme eigenvalues $\min\rho_{\bu^*}$ and $\max\rho_{\bu^*}$ of the Riemannian Hessian at the ground state under the $a_u$ metric for the harmonic trap $V = |\bx|^2$ in $d = 1,2,3$ dimensions, the lower bound $1-\lambda_1(A_\beta)/\lambda_2(A_\beta)$ based on the spectral gap alone, the lower bound $1-\lambda_1(A_\beta)/\Lambda$ of Corollary \ref{coro:conditioning}, and the exact condition number $\kappa_{a_u}(\bu^*)$ with the bound of Corollary \ref{coro:conditioning} in parentheses.}
\label{tab:spectrum}
\end{table}

Table \ref{tab:tf-spectrum} compares the modified $a_u$ and Thomas--Fermi metrics with the $a_u$ metric in one dimension, for $V(x)=x^2$ and for $V(x)=x^2+60e^{-(x-1)^2}$, the one-dimensional analogue of the stirrer potential \eqref{eqn:Vstir} of Section \ref{sec:numerics}. At $\beta=10^2,10^3,10^4$ in the harmonic trap, gradient descent with exact line search on a grid with $N=2000$, started from the Thomas--Fermi profile and stopped at $E(\bu_k)-E(\bu^*)<10^{-12}$, takes $21$, $47$, $114$ iterations under the modified $a_u$ metric against $32$, $84$, $217$ under the $a_u$ metric, but it costs $42$, $94$, $228$ solves against $32$, $84$, $217$. The halved condition number thus reduces iterations without reducing solves.

For the Thomas--Fermi metric, the harmonic-trap data verify the two hypotheses of \eqref{eqn:tf-uniform}, with $e_{\mathrm{TF}}/\lambda_1^{1/3}\to0.43$ from below and $\sigma_T/\lambda_1^{1/3}\to\mu_1$. At $\beta=10^4$, \eqref{eqn:tf-kappa} gives $\kappa_{\mathrm{TF}}(\bu^*)\leq1.38$ against the measured $1.16$, a factor $39$ smaller than $\kappa_{a_u}(\bu^*)$. With the nonseparable bump, $\kappa_{\mathrm{TF}}(\bu^*)$ stays between $1.16$ and $1.18$. Figure \ref{fig:spectrum3d} extends the comparison to three dimensions over $\beta\in[1,3\times10^5]$. There $\kappa_{\mathrm{TF}}(\bu^*)$ falls from $1.89$ to $1.17$, while $\kappa_{a_0}(\bu^*)$ grows by a factor $330$, and the measured exponents against $\lambda_1$ are $0.62$, $1.62$ and $-0.12$ for the $a_u$, $a_0$ and Thomas--Fermi metrics, consistent with the predicted $2/3$, with the $5/3$ in the $a_0$ bound and with \eqref{eqn:tf-uniform}.

The upper edge $s_{N-1}/(s_{N-1}+\lambda_1)$ of the modified $a_u$ spectrum is mesh dependent by construction, and refinement changes $\kappa_{\widetilde a_u}(\bu^*)$ at $\beta=10^4$ from $21.5$ with $N=2000$ to $22.5$ with $N=4000$ in one dimension and by at most $4\%$ in the three-dimensional mesh checks. The lower edge is stable under refinement.

\begin{table}[tp]
\centering\footnotesize
\setlength{\tabcolsep}{3pt}
\begin{tabular}{lrrrrrrr}
\toprule
$V(x)$ & $\beta$ & $\lambda_1$ & $e_{\mathrm{TF}}/\lambda_1^{1/3}$ & $\sigma_T/\lambda_1^{1/3}$ & $a_u$ & $\widetilde a_u$ & TF \\
\midrule
$x^2$ & $10^2$ & $17.84$ & $0.39$ & $2.37$ & $6.4$ & $3.9$ & $1.17$\\
$x^2$ & $10^3$ & $82.56$ & $0.41$ & $2.40$ & $16.7$ & $8.9$ & $1.16$\\
$x^2$ & $10^4$ & $383.16$ & $0.42$ & $2.41$ & $44.9$ & $22.5$ & $1.16$\\
\midrule
$x^2 + 60e^{-(x-1)^2}$ & $10^2$ & $24.93$ & $0.95$ & $2.38$ & $6.7$ & $4.6$ & $1.18$\\
$x^2 + 60e^{-(x-1)^2}$ & $1600$ & $117.88$ & $0.41$ & $2.40$ & $20.4$ & $11.0$ & $1.16$\\
$x^2 + 60e^{-(x-1)^2}$ & $10^4$ & $385.87$ & $0.42$ & $2.41$ & $44.7$ & $22.6$ & $1.16$\\
\bottomrule
\end{tabular}
\caption{Exact condition numbers $\kappa(\bu^*)$ of the Riemannian Hessian at the ground state in one dimension (last three columns) under the $a_u$, modified $a_u$ and Thomas--Fermi ($\ell = 1/2$) metrics, with the Thomas--Fermi error \eqref{eqn:tf-defect} and the coercivity constant $\sigma_T$ in units of the layer scale $\lambda_1^{1/3}$, for the harmonic trap and for the trap with the stirrer bump.}
\label{tab:tf-spectrum}
\end{table}

\begin{figure}[tp]
 \centering
 \includegraphics[width=\linewidth]{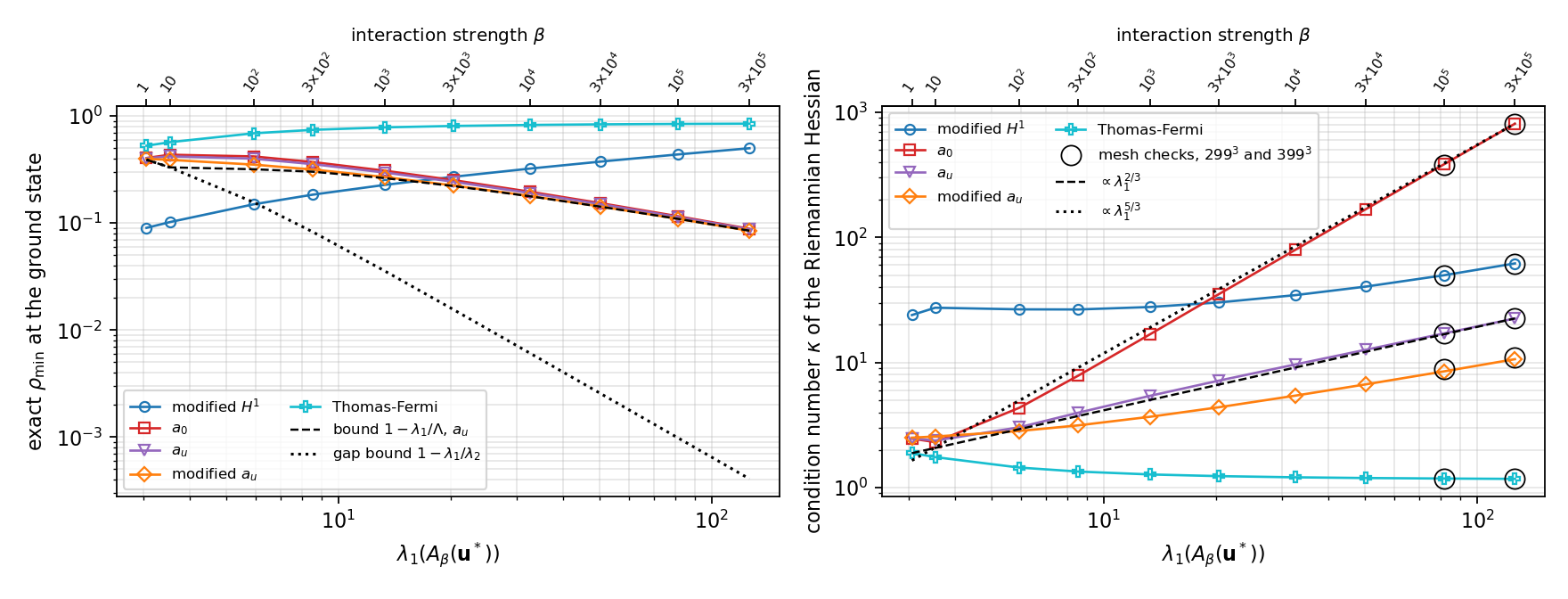}
 \caption{Three-dimensional harmonic trap on a $199^3$ grid. The open rings are the two mesh checks, $\beta = 10^5$ on $399^3$ and $\beta = 3\times10^5$ on $299^3$. Both panels are drawn against $\lambda_1(A_\beta(\bu^*))$, with $\beta$ on the top axis. Left: the exact $\min\rho_{\bu^*}$ for the six metrics of Table \ref{tab:metric-list}, with $\ell = 1/2$ in the Thomas--Fermi metric, together with the lower bound $1-\lambda_1(A_\beta)/\Lambda$ of Corollary \ref{coro:conditioning} and the bound based on the spectral gap alone, both for the $a_u$ metric. The modified $a_0$ metric coincides with the $a_0$ metric because $\mathbb V$ is separable here. Right: the exact condition numbers against the reference slopes $\lambda_1(A_\beta)^{2/3}$ and $\lambda_1(A_\beta)^{5/3}$.}
 \label{fig:spectrum3d}
\end{figure}

\section{Riemannian connection, Hessian and vector transport}\label{app:riemannian-connection}
We work on an open set where $B(\bu)$ is smooth and $\mathbb M B(\bu)$ is symmetric positive definite. Write $\mathbb G(\bu):=B(\bu)^{-1}$ and let $\odot$ denote componentwise multiplication. For $\bu\in\mathcal M$, the $g_\bu$-orthogonal projection $P_\bu$ is defined in \eqref{eqn:projection}, and by \eqref{eqn:metric} and \eqref{eqn:tangent-normal-space}
\begin{equation}\label{eqn:MG-symmetry}
g_\bu(\mathbb G(\bu)\bx,\by)=\ip{\bx,\by}_h,
\qquad N_\bu\mathcal M=\operatorname{span}\{\mathbb G(\bu)\bu\}.
\end{equation}
The five metrics of \eqref{eqn:six-metrics} other than the Thomas--Fermi metric have the form
\begin{equation}\label{eqn:appendix-metric-family}
B(\bu)=B_0+c\beta\diag(\bu^2),\qquad B_0\text{ independent of }\bu.
\end{equation}
Here $c=0$ for the three Euclidean metrics, $c=1$ for the $a_u$ metric and $c=3$ for the modified $a_u$ metric, and $B_0=-\Delta_h+\mathbb V$ gives the family \eqref{eqn:family}. The coefficient $c$ enters only the metric and not the energy \eqref{eqn:discrete-energy}.

\begin{lem}[Riemannian connection]\label{lem:riemannian-connection-ambient}
For a smooth metric $B$, the ambient Levi--Civita connection is $\overline\nabla_\xi\chi=\D\chi(\bu)[\xi]+\Gamma_\bu(\xi,\chi)$, where the bilinear map $\Gamma_\bu$ is determined by
\begin{equation}\label{eqn:connection-general}
\begin{aligned}
2\ip{B\Gamma_\bu(\xi,\chi),\zeta}_h
={}&\ip{\D B[\xi]\chi,\zeta}_h+\ip{\D B[\chi]\xi,\zeta}_h\\
&-\ip{\D B[\zeta]\xi,\chi}_h
\end{aligned}
\end{equation}
for all $\zeta\in\mathbb R^N$, with all vector-field values and coefficients evaluated at $\bu$. For \eqref{eqn:appendix-metric-family},
\begin{equation}\label{eqn:connection-euclidean}
\overline\nabla_\xi\chi
=\D\chi(\bu)[\xi]+c\beta\mathbb G(\bu)(\bu\odot\xi\odot\chi).
\end{equation}
The induced connection on $\mathcal M$ is $\nabla_\xi\chi=P_\bu(\overline\nabla_\xi\chi)$ for tangent vector fields.
\end{lem}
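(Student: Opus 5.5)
The plan is the standard Koszul-formula derivation for the Levi--Civita connection of a metric on an open subset of $\mathbb R^N$, followed by a short computation for the family \eqref{eqn:appendix-metric-family} and a quotation of the classical fact about submanifold connections.

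\emph{Ambient connection.} The metric is $g_\bu(\xi,\chi)=\ip{B(\bu)\xi,\chi}_h$. A connection of the form $\overline\nabla_\xi\chi=\D\chi[\xi]+\Gamma_\bu(\xi,\chi)$ with $\Gamma_\bu$ bilinear is torsion free exactly when $\Gamma_\bu$ is symmetric. Indeed, the Lie bracket of vector fields on $\mathbb R^N$ is $\D\chi[\xi]-\D\xi[\chi]$. It is compatible with $g$ exactly when
\[
\D B[\xi]\chi\cdot\zeta = g(\Gamma(\xi,\chi),\zeta)+g(\chi,\Gamma(\xi,\zeta)),
\]
where the left side is shorthand for $\ip{\D B[\xi]\chi,\zeta}_h$. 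This follows by differentiating $g(\chi,\zeta)$ along $\xi$ and cancelling the $\D\chi$ and $\D\zeta$ terms. Next I take three copies of this identity with the roles of $(\xi,\chi,\zeta)$ permuted cyclically. I add two of them, subtract the third, and use the symmetry of $\Gamma$ together with the self-adjointness of $B$ and of $\D B[\cdot]$ in $\ip{\cdot,\cdot}_h$, since $\mathbb M B$ is symmetric and hence so is its derivative. This yields exactly \eqref{eqn:connection-general}. Because $\mathbb M B$ is positive definite, \eqref{eqn:connection-general} determines $\Gamma_\bu(\xi,\chi)$ uniquely. Conversely, the $\Gamma$ it defines is symmetric in $(\xi,\chi)$ and satisfies compatibility. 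By uniqueness of the Levi--Civita connection, this is the ambient connection.

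\emph{The family \eqref{eqn:appendix-metric-family}.} Here $\D B[\xi]=2c\beta\diag(\bu\odot\xi)$. Then $\ip{\D B[\xi]\chi,\zeta}_h=2c\beta\ip{\bu\odot\xi\odot\chi,\zeta}_h$, and this expression is fully symmetric in $\xi,\chi,\zeta$ because $\mathbb M$ is diagonal. The right side of \eqref{eqn:connection-general} therefore collapses to $2c\beta\ip{\bu\odot\xi\odot\chi,\zeta}_h$. Using \eqref{eqn:MG-symmetry}, we have $\ip{\bx,\zeta}_h=\ip{B\mathbb G\bx,\zeta}_h$, so we may read off $\Gamma_\bu(\xi,\chi)=c\beta\mathbb G(\bu)(\bu\odot\xi\odot\chi)$. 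This is \eqref{eqn:connection-euclidean}.

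\emph{Induced connection.} Since $\mathcal M$ is a Riemannian submanifold of $(\mathbb R^N,g)$ with the induced metric, the Levi--Civita connection of $\mathcal M$ is the $g$-orthogonal projection of the ambient connection \citep[Prop.~5.3.2]{absil2008optimization}. The $g_\bu$-orthogonal projection onto $T_\bu\mathcal M$ is $P_\bu$ of \eqref{eqn:projection}.

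The only step that requires care is the Koszul-type sign bookkeeping. There one must use the symmetry of $\ip{\D B[\cdot]\,\cdot,\cdot}_h$ in its last two slots to combine the three permuted identities into \eqref{eqn:connection-general}. The remaining steps are routine.
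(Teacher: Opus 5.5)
Your proposal is correct and is essentially the paper's proof. Where the paper simply invokes the Koszul formula for constant vector fields, you re-derive it from torsion-freeness and metric compatibility, and both your cyclic-permutation bookkeeping and your uniqueness argument via positive definiteness of $\mathbb M B$ check out; the rest matches the paper: the specialization to \eqref{eqn:appendix-metric-family}, where $\ip{\D B[\xi]\chi,\zeta}_h = 2c\beta\sum_i w_iu_i\xi_i\chi_i\zeta_i$ is fully symmetric so the signed sum collapses, and the appeal to the submanifold projection result for the induced connection.
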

\begin{proof}
The Koszul formula \citep[(5.11)]{absil2008optimization} for constant vector fields gives \eqref{eqn:connection-general}, and the term $\D\chi(\bu)[\xi]$ accounts for general vector fields \citep[(5.7)]{absil2008optimization}. For \eqref{eqn:appendix-metric-family}, $\D B[\xi]=2c\beta\diag(\bu\odot\xi)$. Since $\mathbb M$ is diagonal, each inner product on the right of \eqref{eqn:connection-general} equals $2c\beta\sum_iw_i u_i\xi_i\chi_i\zeta_i$. Their signed sum gives \eqref{eqn:connection-euclidean}. Orthogonal projection gives the induced connection \citep[Prop.~5.3.2]{absil2008optimization}.
\end{proof}

\begin{lem}[Riemannian Hessian at an arbitrary point]\label{lem:riemannian-hess}
Let $\bu\in\mathcal M$, $\eta\in T_\bu\mathcal M$, and $p_\bu:=\grad E(\bu)=\mathbb G(A_\beta(\bu)\bu-\lambda_B(\bu)\bu)$, with $\lambda_B$ as in \eqref{eqn:projection}. For a smooth metric,
\begin{equation}\label{eqn:riemannian-hessian-general}
\Hess E(\bu)[\eta]
=P_\bu\Big(\mathbb G(A_{3\beta}(\bu)-\lambda_B(\bu)\mathbb I)\eta
 -\mathbb G\D B[\eta]p_\bu+\Gamma_\bu(\eta,p_\bu)\Big).
\end{equation}
For \eqref{eqn:appendix-metric-family},
\begin{equation}\label{eqn:riemannian-hess-flat}
\Hess E(\bu)[\eta]
=P_\bu\mathbb G\Big((A_{3\beta}(\bu)-\lambda_B(\bu)\mathbb I)\eta
 -c\beta\,\bu\odot\eta\odot p_\bu\Big).
\end{equation}
At a critical point, $p_\bu=0$ and $\lambda_B(\bu)=\lambda(\bu)$, so for every smooth metric
\[
g_\bu(\Hess E(\bu)[\eta],\eta)
=\ip{(A_{3\beta}(\bu)-\lambda(\bu)\mathbb I)\eta,\eta}_h,
\]
which recovers Lemma \ref{lem:second-variation}.
\end{lem}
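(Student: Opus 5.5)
The plan is to compute $\Hess E(\bu)[\eta]=\nabla_\eta\grad E=P_\bu\big(\D p(\bu)[\eta]+\Gamma_\bu(\eta,p_\bu)\big)$, using Lemma \ref{lem:riemannian-connection-ambient}. Here $p(\bu)=\mathbb G(\bu)\big(A_\beta(\bu)\bu-\lambda_B(\bu)\bu\big)$ is regarded as a smooth vector field on a neighborhood of $\bu$ in $\mathbb R^N$. It is tangent on $\mathcal M$, so the induced connection is the projected ambient connection. Everything therefore reduces to differentiating $p$ in direction $\eta$ and identifying which terms survive the projection $P_\bu$.

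For the derivative, I write $p=\mathbb G\,\mathbf r$ with $\mathbf r:=A_\beta(\bu)\bu-\lambda_B(\bu)\bu$. The product rule gives $\D p[\eta]=\D\mathbb G[\eta]\,\mathbf r+\mathbb G\,\D\mathbf r[\eta]$. Differentiating $\mathbb G B=\mathbb I$ gives $\D\mathbb G[\eta]=-\mathbb G\,\D B[\eta]\,\mathbb G$, so the first term is $-\mathbb G\,\D B[\eta]\,p_\bu$. Since $A_{3\beta}(\bu)$ is the derivative of $\bu\mapsto A_\beta(\bu)\bu$, we have
\[
\D\mathbf r[\eta]=\big(A_{3\beta}(\bu)-\lambda_B(\bu)\mathbb I\big)\eta-\big(\D\lambda_B[\eta]\big)\bu .
\]
The term $-\D\lambda_B[\eta]\,\mathbb G\bu$ lies in $N_\bu\mathcal M=\operatorname{span}\{\mathbb G\bu\}$ by \eqref{eqn:MG-symmetry}, so it is annihilated by $P_\bu$. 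I never need the derivative of $\lambda_B$ explicitly; noticing this is what keeps the computation clean. Collecting the surviving terms and adding $\Gamma_\bu(\eta,p_\bu)$ gives \eqref{eqn:riemannian-hessian-general}.

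For the family \eqref{eqn:appendix-metric-family}, I insert $\D B[\eta]=2c\beta\diag(\bu\odot\eta)$ and $\Gamma_\bu(\eta,p_\bu)=c\beta\,\mathbb G(\bu\odot\eta\odot p_\bu)$ from \eqref{eqn:connection-euclidean}. The two contributions then combine as $-2c\beta\,\mathbb G(\bu\odot\eta\odot p_\bu)+c\beta\,\mathbb G(\bu\odot\eta\odot p_\bu)=-c\beta\,\mathbb G(\bu\odot\eta\odot p_\bu)$, which, after factoring out $\mathbb G$, is \eqref{eqn:riemannian-hess-flat}.

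At a critical point, $p_\bu=0$. Then $\mathbf r=0$, and pairing with $\bu$ gives $\lambda_B(\bu)=\lambda(\bu)$, as noted after \eqref{eqn:projection}. The Hessian becomes $P_\bu\mathbb G(A_{3\beta}-\lambda\mathbb I)\eta$. Since $P_\bu$ is the $g_\bu$-orthogonal projection and $\eta$ is tangent,
\[
g_\bu(P_\bu\bw,\eta)=g_\bu(\bw,\eta),
\]
and \eqref{eqn:MG-symmetry} gives $g_\bu(\mathbb G\bx,\eta)=\ip{\bx,\eta}_h$. Together these yield the stated quadratic form. The main points requiring care are the sign bookkeeping in $\D\mathbb G$ combined with $\Gamma_\bu$, and justifying that $p$ extends to a smooth ambient field; the latter holds because the formula for $p$ is smooth wherever $B$ is smooth and $\ip{\bu,\mathbb G\bu}_h>0$.
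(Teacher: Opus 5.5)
Your proposal is correct and matches the paper's proof. You differentiate $p=\mathbb G\mathbf r$ along $\eta$, discard the normal term $-\D\lambda_B[\eta]\,\mathbb G\bu$ under $P_\bu$, add $\Gamma_\bu(\eta,p_\bu)$ and project, then specialize via \eqref{eqn:connection-euclidean} and use \eqref{eqn:MG-symmetry} at a critical point, and you also spell out two steps the paper leaves implicit: $\D\mathbb G[\eta]=-\mathbb G\,\D B[\eta]\,\mathbb G$, and the combination $-2c\beta+c\beta=-c\beta$ that yields \eqref{eqn:riemannian-hess-flat}.
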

\begin{proof}
Differentiating $p_\bu$ along a curve on $\mathcal M$ with initial velocity $\eta$ gives
\[
\D p_\bu[\eta]
=\mathbb G(A_{3\beta}(\bu)-\lambda_B(\bu)\mathbb I)\eta
 -\mathbb G\D B[\eta]p_\bu
 -\D\lambda_B(\bu)[\eta]\,\mathbb G\bu.
\]
The last term is normal and vanishes under $P_\bu$. By \citet[Def.~5.5.1 and Prop.~5.3.2]{absil2008optimization}, $\Hess E(\bu)[\eta]$ is the projection $P_\bu$ of the ambient covariant derivative of the vector field $p$ along $\eta$, so adding the connection term and projecting proves \eqref{eqn:riemannian-hessian-general}, and \eqref{eqn:connection-euclidean} gives \eqref{eqn:riemannian-hess-flat}. At a critical point the metric-derivative terms vanish, and \eqref{eqn:MG-symmetry} gives the quadratic-form identity. Away from critical points, $\lambda_B(\bu)$ cannot in general be replaced by $\lambda(\bu)$.
\end{proof}

\emph{The Thomas--Fermi metric.}
The operator $B_{\mathrm{TF}}(\bu)$ of \eqref{eqn:tf-metric} depends on $\bu$ also through $\lambda(\bu)$, so it is not of the form \eqref{eqn:appendix-metric-family}. It is smooth on the open set of all $\bu\neq0$ with $\lambda(\bu)\neq\mathbb V_i$ for every node $i$. On this set, Lemmas \ref{lem:riemannian-connection-ambient} and \ref{lem:riemannian-hess} apply with the general formulas \eqref{eqn:connection-general} and \eqref{eqn:riemannian-hessian-general}.

\emph{Retraction and vector transport.}
The projection transport used in Section \ref{ssec:setup} is
\begin{equation}\label{eqn:projection-transport}
\mathcal T_\eta(\xi):=P_\bv\xi
=\xi-\frac{\ip{\bv,\xi}_h}{\ip{\bv,\mathbb G(\bv)\bv}_h}\,\mathbb G(\bv)\bv,
\qquad\bv=R_\bu(\eta),
\end{equation}
for $\eta,\xi\in T_\bu\mathcal M$. It is the transport \citep[(8.10)]{absil2008optimization} with the $g$-orthogonal projection in place of the Euclidean one. It lies in $T_\bv\mathcal M$, is linear in $\xi$, satisfies $\mathcal T_0(\xi)=\xi$ and depends smoothly on $(\bu,\eta,\xi)$ wherever $B$ is smooth. These are the defining properties of a vector transport associated with $R$ \citep[Def.~8.1.1]{absil2008optimization}. Differentiating the retraction gives another transport \citep[(8.6) and Example~8.1.4]{absil2008optimization},
\begin{equation}\label{eqn:differentiated-transport}
\D R_\bu(\eta)[\xi]
=\frac{\xi-\bv\ip{\bv,\xi}_h}{\sqrt{1+\norm\eta_h^2}},
\end{equation}
which differs from \eqref{eqn:projection-transport} in general, and neither one is in general the parallel transport of the Riemannian connection. In the metric norm $\norm\xi_\bu:=g_\bu(\xi,\xi)^{1/2}$ of Section \ref{sec:convergence}, orthogonal projection gives $\norm{\mathcal T_\eta(\xi)}_\bv\leq\norm\xi_\bv$, but in general not $\norm{\mathcal T_\eta(\xi)}_\bv\leq\norm\xi_\bu$ when the metric depends on $\bu$. Lemma \ref{lem:second-variation} holds along every curve, in particular along $t\mapsto R_\bu(t\eta)$, because the acceleration term is multiplied by $\grad E(\bu)=0$ at a critical point, so no second-order property of $R$ is needed.

\end{document}